%
%
%

\documentclass[11pt,twoside]{amsart}
\usepackage{latexsym,amssymb,amsmath}
\usepackage[all]{xy}
\usepackage{tikz,pgfplots}
\usepackage{extpfeil}
\usepackage{afterpage}
\usepackage{float}

\textwidth=16.00cm
\textheight=22.00cm
\topmargin=0.00cm
\oddsidemargin=0.00cm 
\evensidemargin=0.00cm
\headheight=0cm
\headsep=1cm
\headsep=0.5cm 
\numberwithin{equation}{section}
\hyphenation{semi-stable}
\setlength{\parskip}{3pt}

\def\demo{\noindent{\it Proof. }}
\newtheorem{theorem}{Theorem}[section]
\newtheorem{lemma}[theorem]{Lemma}
\newtheorem{proposition}[theorem]{Proposition}
\newtheorem{corollary}[theorem]{Corollary}

\newtheorem{problem}[theorem]{Problem}

\theoremstyle{definition}
\newtheorem{definition}[theorem]{Definition} 
\newtheorem{procedure}[theorem]{Procedure} 
\newtheorem{remark}[theorem]{Remark}
\newtheorem{example}[theorem]{Example}

\begin{document}


\title[Evaluation codes and their basic parameters]
{Evaluation codes and their basic parameters} 

\thanks{The first author was supported by a scholarship from CONACYT,
Mexico. The second author
was partially supported by the Center for Mathematical Analysis, Geometry and
Dynamical Systems of Instituto Superior T\'ecnico, Universidade de
Lisboa. The third author was supported by SNI, Mexico.}

\author[D. Jaramillo]{Delio Jaramillo}
\address{
Departamento de
Matem\'aticas\\
Centro de Investigaci\'on y de Estudios Avanzados del IPN\\
Apartado Postal
14--740 \\
07000 Mexico City, Mexico.
}
\email{djaramillo@math.cinvestav.mx}

\author[M. Vaz Pinto]{Maria Vaz Pinto}
\address{Departamento de Matem\'atica, Instituto Superior T\'ecnico,
Universidade de Lisboa, Avenida Rovisco Pais, 1, 1049-001 Lisboa,
Portugal.
} 
\email{vazpinto@math.tecnico.ulisboa.pt}

\author[R. H. Villarreal]{Rafael H. Villarreal}
\address{
Departamento de
Matem\'aticas\\
Centro de Investigaci\'on y de Estudios
Avanzados del
IPN\\
Apartado Postal
14--740 \\
07000 Mexico City, Mexico
}
\email{vila@math.cinvestav.mx}

\keywords{Evaluation codes, toric codes, minimum distance, projective
torus, footprint, degree, Reed--Muller codes, generalized Hamming
weights, affine variety, finite field, Gr\"obner bases.}
\subjclass[2010]{Primary 13P25; Secondary 14G50, 94B27, 11T71.} 
\begin{abstract} 
The aim of this work is to give degree formulas for the generalized Hamming weights of
evaluation codes and to show lower bounds for these weights.
In particular, we give degree formulas for the generalized Hamming weights 
of Reed--Muller-type codes, and we determine the minimum
distance of toric codes over hypersimplices,
and the 1st and 2nd generalized Hamming weights of squarefree evaluation codes. 
\end{abstract}

\maketitle 

\section{Introduction}\label{intro-section}

Let $S=K[t_1,\ldots,t_s]=\bigoplus_{d=0}^\infty S_d$ be a polynomial ring over a finite
field $K=\mathbb{F}_q$ with the standard grading and let
$X=\{P_1,\ldots,P_m\}$ be a set of distinct points in the affine space
$\mathbb{A}^s:=K^s$. The \textit{evaluation map}, denoted ${\rm ev}$,
is the $K$-linear map given by  
$${\rm ev}\colon S\rightarrow K^{m},\quad
f\mapsto\left(f(P_1),\ldots,f(P_m)\right).
$$ 
\quad The kernel of ${\rm ev}$, denoted $I(X)$, is the 
\textit{vanishing ideal} of $X$ consisting of the polynomials of $S$
that vanish at all points of $X$. This map induces an isomorphism of
$K$-linear spaces between $S/I(X)$ and $K^m$. Let $\mathcal{L}$ be a
linear subspace of $S$ of finite dimension. The image of $\mathcal{L}$
under the evaluation map, denoted $\mathcal{L}_X$, is called an
\textit{evaluation code} on $X$ \cite{stichtenoth,tsfasman}. 

Let $\prec$ be a monomial order on $S$ \cite[p.~54]{CLO} and let $I=I(X)$ be the
vanishing ideal of $X$. The monomials of $S$ are denoted
$t^a:=t_1^{a_1}\cdots t_s^{a_s}$, $a=(a_1,\dots,a_s)$ in $\mathbb{N}^s$, where
$\mathbb{N}=\{0,1,\ldots\}$. We denote the initial monomial of a
non-zero polynomial $f\in S$ by ${\rm in}_\prec(f)$ and the initial
ideal of $I$ by ${\rm in}_\prec(I)$. A monomial $t^a$ is called a 
\textit{standard monomial} of $S/I$, with respect 
to $\prec$, if $t^a\notin{\rm in}_\prec(I)$. 
The \textit{footprint} of $S/I$, denoted
$\Delta_\prec(I)$, is the set of all standard monomials of $S/I$. The
footprint has been used in connection with many kinds of codes 
\cite{geil-2008,geil-hoholdt,geil-pellikaan,Pellikaan}. 

The linear code $\mathcal{L}_X$ is called a \textit{standard
evaluation code} on $X$ relative to $\prec$ if 
$\mathcal{L}$ is a linear subspace 
of $K\Delta_\prec(I)$, the $K$-linear space spanned by $\Delta_\prec(I)$. 
A polynomial $f$ is called
a \textit{standard polynomial} of $S/I$ if $f\neq 0$ and $f$ is in
$K\Delta_\prec(I)$. 
As the field $K$ is finite, there are only a finite number of standard
polynomials. 

The aim of this work is to introduce general methods, similar to those of
\cite{min-dis-generalized,rth-footprint,hilbert-min-dis}, to study the
basic parameters of the family of evaluation codes and
those of certain interesting subfamilies, such as, affine and
projective Reed--Muller-type codes, generalized toric codes, toric codes over
hypersimplices and squarefree evaluation codes. This will also 
allow us to gain insight of the 
geometry of affine varieties and systems of
polynomial equations over finite fields
\cite{Becker-Weispfenning,singular,cocoa-book,Kreuzer-linear-algebra}.
We study standard evaluation  
codes first and then, using a monomial order, 
we show how to transform an evaluation code into
a standard evaluation code. 

If $F$ is a finite subset of $S$, the \textit{affine variety} of $F$ in
$X$, denoted $V_X(F)$, is the set of all
$\alpha\in X$ such that $f(\alpha)=0$ for all $f\in F$.  
The ideal 
$$(I\colon(F)):=\{g\in S\, \vert\, g(F) \in I \}
$$
is referred to as a \textit{colon ideal}. This ideal is useful to determine whether or not
the affine variety $V_X(F)$ is non-empty (Lemma~\ref{vila-delio-feb27-20}). 

The \textit{degree} of the quotient ring $S/I$, denoted $\deg(S/I)$, is
defined using Hilbert functions at the beginning of
Section~\ref{degree-variety-section}.  
This invariant plays a unifying role in the theory of
affine and projective varieties over finite fields. For instance, 
using the degree, one has similar
formulas for $|V_X(F)|$ when $X$ 
is a set of affine or
projective points (cf.
Lemma~\ref{degree-formula-zeros-affine} and
\cite[Lemma~3.4]{rth-footprint}).
The footprint of $S/I$ combined with
the degree (Theorems~\ref{affine-zeros-formula} and
\ref{degree-initial-footprint-affine}) 
will be used to compute and to find lower bounds for the
generalized Hamming weights of evaluation  codes over $X$. 

We now turn our attention to standard evaluation codes and present 
degree formulas, and degree-footprint lower bounds formulas, for their generalized Hamming
weights. The \textit{parameters} of the linear
code $\mathcal{L}_X$ that we consider are:
\begin{itemize}
\item[(a)] \textit{length}: $|X|$,
\item[(b)] \textit{dimension}: $\dim_K(\mathcal{L}_X)$, and 
\item[(c)] $r$-th \textit{generalized Hamming weight}: 
$\delta_r(\mathcal{L}_X)$. 
\end{itemize}

For convenience we recall the notion of 
generalized Hamming weight of a linear code \cite{helleseth,wei}. Let $C$ be a $[m,k]$ {\it linear
code} of {\it length} $m$ and {\it dimension} $k$, 
that is, $C$ is a linear subspace of $K^m$ with $k=\dim_K(C)$. Let $1\leq r\leq k$ be an integer.  
Given a subcode $D$ of $C$ (that is, $D$ is a linear subspace of $C$),
the {\it support\/} $\chi(D)$ of $D$ is the set   
$$
\chi(D):=\{i\,\vert\, \exists\, (a_1,\ldots,a_m)\in D,\, a_i\neq 0\}.
$$
\quad The $r$-th \textit{generalized Hamming weight} of $C$, denoted
$\delta_r(C)$, is the size of the smallest support of an
$r$-dimensional subcode. If $r=1$, $\delta_r(C)$ is the \textit{minimum
distance} of $C$ and is denoted simply by $\delta(C)$. Generalized Hamming weights
have received a lot of attention; see
\cite{carvalho,Huffman-Pless,Johnsen,tsfasman,
wei} and the
references therein. The study of these weights is related to 
trellis coding, $t$--resilient functions, and was motivated by 
some applications from cryptography \cite{wei}. 

There are combinatorial formulas for the generalized Hamming weights
of some interesting families of evaluation codes
\cite{Beelen-RM,carvalho-lopez-lopez,rth-footprint,Pellikaan,weights-matroid}.
The work done 
by Heijnen and Pellikaan 
\cite{Pellikaan} relates footprints and generalized Hamming weights and
introduce methods to study certain evaluation codes
(cf.~\cite[Section~7]{Pellikaan}). These methods were used in  
\cite{GHWCartesian} to determine the generalized Hamming weights of affine
Cartesian codes. 

If $F\subset S$, the $K$-linear subspace 
of $S$ spanned by $F$ is denoted by $KF$. Let $\prec$ be a monomial
order on $S$. Given a linear subspace
$\mathcal{L}$ of $K\Delta_\prec(I)$ and an integer $1\leq
r\leq\dim_K(\mathcal{L})$, let $\mathcal{L}^*$
be the set of nonzero elements of $\mathcal{L}$, 
and let $\mathcal{L}_{\prec,r}$ be
the set of all subsets $F=\{f_1,\ldots,f_r\}$ of $\mathcal{L}^*$ such 
that ${\rm in}_\prec(f_1),\ldots,{\rm in}_\prec(f_r)$ are distinct
monomials and $f_i$ is monic for $i=1,\ldots,r$. 

One of the main result of Section~\ref{GHW-section} is the following
degree formula for  
the $r$-th generalized Hamming weight $\delta_r(\mathcal{L}_X)$ of a
standard evaluation code $\mathcal{L}_X$. 

\noindent {\bf Theorem~\ref{GHW-standard-code}.}\textit{ 
Let $X$ be a subset of $\mathbb{A}^s$, let $I$ be the vanishing
ideal of $X$, let $\mathcal{L}$ be a linear
subspace of $K\Delta_\prec(I)$, and let $\mathcal{L}_X$ be the standard evaluation code
on $X$ relative to $\prec$. Then 
$$
\delta_r(\mathcal{L}_X)=\deg(S/I)-\max\{\deg(S/(I,F))\vert\,
F\in\mathcal{L}_{\prec,r}\}\ \mbox{ for }\ 1\leq r\leq\dim_K(\mathcal{L}_X).
$$
}
\quad This theorem can be applied to any evaluation code
$\mathcal{L}_X$ by constructing a 
standard evaluation code $\widetilde{\mathcal{L}}_X$ on $X$, relative to
a monomial order $\prec$, such that
$\widetilde{\mathcal{L}}_X=\mathcal{L}_X$
(Proposition~\ref{transforming-to-standard-form},
Example~\ref{transforming-example}). We will apply this theorem to 
interesting subfamilies of evaluation codes. 

Fix a degree $d\geq 1$ and let $S_{\leq 
d}=\bigoplus_{i=0}^dS_i$ be the
$K$-linear subspace of $S$ of all polynomials of degree at most $d$. If
$\mathcal{L}$ is equal to $S_{\leq d}$, then the resulting evaluation
code $\mathcal{L}_X$ is called a \textit{Reed-Muller-type code} of degree $d$ on $X$
\cite{duursma-renteria-tapia,GRT} and is denoted by $C_X(d)$. If
$X=\mathbb{A}^s$, we obtain the generalized Reed--Muller code
$\mathcal{R}_q(d,s)$ \cite[p.~524]{Huffman-Pless} or $q$-ary Reed--Muller code
\cite{Pellikaan}. As an
application of Theorem~\ref{GHW-standard-code} we obtain a formula to
compute the $r$-th generalized Hamming weight of $C_X(d)$ using the degree and a
graded monomial order (Corollary~\ref{rth-GHW-affine},
Example~\ref{5points-in-A2}).

The minimum distance of $C_X(d)$ can be
computed using the standard polynomials of $S/I$ of degree at
most $d$ (Corollary~\ref{rth-GHW-affine}). Using this together with 
Proposition~\ref{behavior-hilbert-function},    
we show that the minimum distance
of $C_X(d)$ can be computed recursively using only the standard
polynomials of $S/I$ of degree $d$ (Corollary~\ref{min-dis-affine},
Example~\ref{12points-A3-F3}).

To compute $\delta_r(\mathcal{L}_X)$ is a very difficult problem even
for $r=1$ because $|\mathcal{L}^*|=q^k-1$, where $k$ is the
dimension of $\mathcal{L}_X$. In practice
the formula of Theorem~\ref{GHW-standard-code} can only be used to
compute examples for small values of $r,q,s$, and $k$. Using
Theorem~\ref{GHW-standard-code}, we 
show lower bounds for $\delta_r(\mathcal{L}_X)$, in terms of the degree and
the footprint of $S/I$, which are easier to compute. 

Let $\mathcal{N}_{\prec,r}$ be the 
family of all subsets $N$ of ${\rm in}_\prec(\mathcal{L}^*):=\{{\rm
in}_\prec(f)\vert\, f\in\mathcal{L}^*\}$
with $r$ distinct elements.  The $r$-th \textit{footprint} of the
evaluation code $\mathcal{L}_X$, denoted 
${\rm fp}_r(\mathcal{L}_X)$, 
is given 
by 
$$
{\rm fp}_r(\mathcal{L}_X):=\deg(S/I)-\max\{\deg(S/({\rm
in}_\prec(I),N))\,\vert\,
N\in\mathcal{N}_{\prec,r}\}.
$$
\quad The $r$-th footprint of $\mathcal{L}_X$ is easier to compute
than $\delta_r(\mathcal{L}_X)$ because $|{\rm
in}_\prec(\mathcal{L}^*)|$ is $k$. The other main result of
Section~\ref{GHW-section} is the 
following degree-footprint lower bound for $\delta_r(\mathcal{L}_X)$. 

\noindent {\bf Theorem~\ref{rth-footprint-lower-bound}.}\textit{ 
Let $X$ be a subset of $\mathbb{A}^s$, let $I=I(X)$ be the vanishing
ideal of $X$, let $\mathcal{L}$ be a linear
subspace of $K\Delta_\prec(I)$, and let $\mathcal{L}_X$ be the standard evaluation code
on $X$. Then 
$${\rm fp}_r(\mathcal{L}_X)\leq \delta_r(\mathcal{L}_X)
\ \mbox{ for }\ 1\leq r\leq\dim_K(\mathcal{L}_X).
$$
}
\quad As an application of Theorem~\ref{rth-footprint-lower-bound} we
obtain a lower bound for the
$r$-generalized Hamming weight of $C_X(d)$
(Corollary~\ref{rth-footprint-lower-bound-Reed-Muller},
Example~\ref{affine-torus-A2-F5}).  

The scope of our results include another interesting family of 
evaluation codes that we now introduce. 
Let $\mathbb{X}=\{P_1,\ldots,P_m\}$ be a set of non-zero distinct points in
$(\mathbb{F}_q)^s$ such that the first non-zero
entry of each $P_i$ is $1$. If $\mathcal{L}=S_d$, the evaluation 
code $\mathcal{L}_\mathbb{X}$ 
on $\mathbb{X}$ is called a \textit{projective Reed--Muller-type}
code on $\mathbb{X}$ \cite{GRT}. In particular, by making $\mathcal{L}=S_d$ and
$\mathbb{X}$ equal to the set of all non-zero points of
$(\mathbb{F}_q)^s$ whose first non-zero entry is $1$, we obtain the 
classical projective Reed--Muller code studied by
Lachaud and S{\o}rensen\cite{lachaud,sorensen}. 

As we just saw, one can apply our results on evaluation codes to 
non-classical projective
Reed--Muller-type codes (Example~\ref{10points-P3-F3},
Procedure~\ref{10points-P3-F3-procedure}). 
One can study
Reed-Muller-type codes over a set $X\subset\mathbb{A}^s$ by using 
projective Reed--Muller-type codes over the set
$\mathbb{X}=\{(x,1)\vert\, x\in X\}$,
because the corresponding codes on $X$ and $\mathbb{X}$ have the same
basic parameters \cite{affine-codes}. 

We show examples of how to use Hilbert's Nullstellensatz over 
$\mathbb{F}_q$ \cite{Ghorpade}
(Proposition~\ref{Nullstellensatz-finite}) to estimate the parameters
of evaluation codes over affine varieties 
that are in a broad sense algebraic geometry codes
\cite[p.~192]{tsfasman} (Examples~\ref{Hermitian-curve-example} and 
\ref{elliptic-curve-example}). Then, we give a projective
version over $\mathbb{F}_q$ of Hilbert's Nullstellensatz
(Theorem~\ref{Nullstellensatz-finite-projective}) that can be used in 
the case of evaluation codes over projective varieties defined by a
given set of homogeneous polynomials (Example~\ref{Cox-example}).

The rest of this paper is devoted to show applications 
of the results of Sections~\ref{degree-variety-section} and
\ref{GHW-section} to two other families of evaluation codes that
we now introduce.   

First we introduce toric codes over hypersimplices. 
Let $1\leq d\leq s$ be an integer, $s\geq 2$, 
and let $\mathcal{P}$ be the convex hull in $\mathbb{R}^s$
of all integral points $\mathbf{e}_{i_1}+\cdots+\mathbf{e}_{i_d}$
such that $1\leq i_1<\cdots< i_d\leq s$, 
where $\mathbf{e}_i$ is the $i$-th unit vector in $\mathbb{R}^s$. The lattice polytope
$\mathcal{P}$ is called the $d$-th \textit{hypersimplex} of
$\mathbb{R}^s$ \cite[p.~84]{Stur1}.  
The \textit{affine torus} of the affine space $\mathbb{A}^s:=K^s$ is 
given by $T:=(K^*)^s$, where $K^*$ is the
multiplicative group of the field $K$. The \textit{toric code} of
$\mathcal{P}$ of degree $d$,  
denoted $\mathcal{C}_\mathcal{P}(d)$ or simply
$\mathcal{C}_d$, is the image
of the evaluation map 
\begin{equation}\label{may10-20}
{\rm ev}_d\colon KV_d\rightarrow K^{m},\quad
f\mapsto\left(f(P_1),\ldots,f(P_m)\right),
\end{equation}
where $KV_d$ is the $K$-linear subspace of $S_d$ spanned by
the set $V_d$ of all $t^a$ such that $a\in\mathcal{P}\cap\mathbb{Z}^s$, and
$\{P_1,\ldots,P_m\}$ is the set of all points 
of the affine torus $T$. A monomial $t^a$ of $S$ is in
$KV_d$ if and only if $t^a$ is squarefree and has degree $d$. The set
$V_d$ is precisely the set of squarefree monomials of $S$ of degree $d$. 

Toric codes were introduced by Hansen \cite{Hansen} and have
been actively studied in the last decade, see \cite{Soprunov} and the
references therein. 
These codes are affine-variety
codes in the sense of \cite[p.~1]{Ballico-Elia-Sala}. 
For $q\geq 3$, the toric code $\mathcal{C}_\mathcal{P}(d)$ is a standard evaluation
code on $T$, relative to any monomial order $\prec$, because the
linear space $\mathcal{L}=KV_d$ is spanned by $V_d$, 
all elements of $V_d$ are standard monomials of $S/I(T)$, and
$\mathcal{L}_T=\mathcal{C}_\mathcal{P}(d)$ (Lemma~\ref{toric-squarefree-standard}).  

We solve part of the following problem by determining the minimum distance
of $\mathcal{C}_\mathcal{P}(d)$.

\begin{problem}
Find formulas for the minimum distance or more generally for the
generalized Hamming 
weights of the toric code $\mathcal{C}_\mathcal{P}(d)$.
\end{problem}
We come to our main result on toric codes.

\noindent{\bf Theorem~\ref{Notre-Dame-Cathedral-Apr15-2019}.}\textit{ 
Let $\mathcal{C}_\mathcal{P}(d)$ be the toric code of $\mathcal{P}$
of degree $d$ and let $\delta(\mathcal{C}_\mathcal{P}(d))$ be its minimum distance. Then
$$
\delta(\mathcal{C}_\mathcal{P}(d))=\begin{cases}
(q-2)^d(q-1)^{s-d}&\mbox{ if }d\leq s/2,\, q\geq 3,\\
(q-2)^{s-d}(q-1)^{d}&\mbox{ if }s/2 < d < s, \,q\geq 3,\\
(q-1)^{s} &\mbox{ if } d=s,
\\
1 &\mbox{ if } q=2.
\end{cases}
$$
}
\quad The 2nd generalized Hamming weight of the toric code
$\mathcal{C}_\mathcal{P}(d)$ has 
been recently determined by Patanker and Singh \cite{Patanker-Singh}.

We now introduce the family of squarefree evaluation codes. Elements
of this family are  generalized toric codes in 
the sense of \cite{Little,Ruano,Soprunov}.  Let 
$V_{\leq d}$ be the set of all squarefree
monomials of $S$ of degree at most $d$. If we replace $V_{d}$ by
$V_{\leq d}$ in the evaluation map of Eq.~(\ref{may10-20}), the image of the
resulting map, denoted $\mathcal{C}_{\leq d}$, is called 
a \textit{squarefree evaluation code} of degree $d$ on $T$. If we replace $KV_{d}$ by
$S_{\leq d}$ in the evaluation map of Eq.~(\ref{may10-20}), 
the image of the resulting map is the 
Reed--Muller-type code $C_T(d)$ over the affine torus
$T$. The parameters of $C_T(d)$
have been determined in
\cite{GHWCartesian,rth-footprint,
camps-sarabia-sarmiento-vila,cartesian-codes,ci-codes}.

We solve part of the following problem by 
determining easy to evaluate formulas for the minimum distance and
the 2nd generalized Hamming weight of 
$\mathcal{C}_{\leq d}$.

\begin{problem}
Find formulas for the minimum distance or more generally for the generalized Hamming
weights of the squarefree evaluation code $\mathcal{C}_{\leq d}$.
\end{problem}
The first main result about squarefree evaluation codes is:

\noindent{\bf Theorem~\ref{min-dis-squarefree}.}\textit{ 
If $q\geq 3$, then the minimum distance 
of $\mathcal{C}_{\leq d}$ is $(q-2)^{d}(q-1)^{s-d}$.
}

We obtain formulas to compute the generalized Hamming weights of
squarefree evaluation codes, and also obtain the corresponding
footprint lower bounds
(Corollaries~\ref{rth-GHW-squarefree} and
\ref{rth-footprint-lower-bound-squarefree}). 

The second main result about squarefree evaluation codes is the
following formula.

\noindent{\bf Theorem~\ref{2nd-GHW-squarefree}.}\textit{
If $q\geq 3$, then the second generalized Hamming weight of $\mathcal{C}_{\leq d}$ is
$$
\delta_2(\mathcal{C}_{\leq d})=
\begin{cases}
(q-2)^{s-1}(q-1)&\mbox{ if }\, d=s,\\
(q-2)^d(q-1)^{s-d-1}q&\mbox{ if }\, d<s.
\end{cases}
$$
}
\quad We include one section with some examples illustrating some of our
results (Section~\ref{examples-section}) and an appendix with implementations in
\textit{Macaulay}$2$ \cite{mac2} that show how some
of our results can be used in practice (Appendix~\ref{Appendix}).  

For all unexplained
terminology and additional information we refer to 
\cite{CLO,Sta1,monalg-rev} (for the theory of Gr\"obner bases and Hilbert
functions), and
\cite{Huffman-Pless,MacWilliams-Sloane,tsfasman} (for the theory of
error-correcting codes and linear codes). 

\section{Preliminaries: Affine varieties over finite
sets}\label{degree-variety-section} 
In this section we study affine varieties defined over finite sets. 
The results of this section do not need the hypothesis that the
field is finite.

Let $S=K[t_1,\ldots,t_s]$ be a
polynomial ring over a field $K$ and let $I$ be an ideal of $S$. 
The Krull dimension of $S/I$ is
denoted by $\dim(S/I)$. We say that $I$ has \textit{dimension} $k$ if
$\dim(S/I)$ is equal to $k$. The $K$-linear space of polynomials in $S$ 
(resp. $I$) of degree at most $d$ is denoted by $S_{\leq d}$ (resp.
$I_{\leq d}$). The function
$$
H_I^a(d):=\dim_K(S_{\leq d}/I_{\leq d}),\ \ \ d=0,1,2,\ldots,
$$
is  called the \textit{affine Hilbert function} of $S/I$. 
Let $u=t_{s+1}$ be a new variable and let $I^h\subset S[u]$ 
be the {\it homogenization\/} of $I$, where $S[u]$ is given the 
standard grading. 
One has the following two well-known facts
$$
\dim(S[u]/I^h)=\dim(S/I)+1\mbox{ and } H_I^a(d)=H_{I^h}(d)
\mbox{ for }d\geq 0,
$$
where $H_{I^h}(d)=\dim_K(S[u]/I^h)_d$, see for instance
\cite[Lemma~8.5.4]{monalg-rev}. If $k=\dim(S/I)$, by Hilbert
theorem \cite[p.~58]{Sta1}, there is a unique polynomial
$h^a_I(z)=\sum_{i=0}^{k}a_iz^i\in  
\mathbb{Q}[z]$ of degree $k$ such that
$h^a_I(d)=H_I^a(d)$ for $d\gg 0$. By convention the
degree of the zero polynomial is $-1$.   
The integer $k!\, a_k$, denoted ${\rm deg}(S/I)$, is called the
\textit{degree} of $S/I$.  The degree of $S/I$ is equal to 
$\deg(S[u]/I^{h})$. 
If $k=0$, then $H_I^a(d)=\deg(S/I)=\dim_K(S/I)$ for $d\gg 0$. Note
that the degree of $S/I$ is positive if $I\subsetneq S$ and is $0$
otherwise. 

An element $f\in S$ is called a {\it zero-divisor\/} of $S/I$---as an
$S$-module---if there is
$\overline{0}\neq \overline{a}\in S/I$ such that
$f\overline{a}=\overline{0}$, and $f$ is called {\it regular\/} on
$S/I$ otherwise. Note that $f$ is a zero-divisor of $S/I$ if
and only if $(I\colon f)\neq I$. An associated prime of $I$ is a prime
ideal $\mathfrak{p}$ of $S$ of the form $\mathfrak{p}=(I\colon f)$
for some $f$ in $S$. The radical of $I$ is denoted by ${\rm rad}(I)$.
The ideal $I$ is \textit{radical} if $I={\rm rad}(I)$.

\begin{theorem}{\cite[Lemma~2.1.19,
Corollary~2.1.30]{monalg-rev}}\label{zero-divisors} If $I$ is an
ideal of $S$ and
$I=\mathfrak{q}_1\cap\cdots\cap\mathfrak{q}_m$ is
an irredundant primary decomposition with ${\rm
rad}(\mathfrak{q}_i)=\mathfrak{p}_i$, then the set of zero-divisors
$\mathcal{Z}_S(S/I)$  of $S/I$ is equal to
$\bigcup_{i=1}^m\mathfrak{p}_i$,
and $\mathfrak{p}_1,\ldots,\mathfrak{p}_m$ are the associated primes of
$I$.
\end{theorem}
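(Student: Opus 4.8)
The statement is a classical result from commutative algebra (the theory of primary decomposition), so the plan is essentially to recall the standard argument rather than to invent anything new; since the excerpt itself cites \cite[Lemma~2.1.19, Corollary~2.1.30]{monalg-rev}, I would keep the proof brief and self-contained. First I would fix an irredundant primary decomposition $I=\mathfrak{q}_1\cap\cdots\cap\mathfrak{q}_m$ with $\mathrm{rad}(\mathfrak{q}_i)=\mathfrak{p}_i$, and reduce to the case $I=(0)$ by passing to $S/I$: an element $f$ is a zero-divisor of $S/I$ if and only if its image is a zero-divisor of $(S/I)/(0)$, the associated primes of $I$ correspond to the associated primes of $(0)$ in $S/I$, and $\mathrm{rad}$ commutes with this quotient, so nothing is lost.

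Next I would prove the two inclusions of $\mathcal{Z}_S(S/I)=\bigcup_{i=1}^m \mathfrak{p}_i$. For the inclusion $\bigcup \mathfrak{p}_i \subseteq \mathcal{Z}$, irredundancy gives, for each $i$, an element $g_i \in \bigcap_{j\neq i}\mathfrak{q}_j$ with $g_i \notin \mathfrak{q}_i$; then for any $f\in\mathfrak{p}_i$ some power $f^k$ lies in $\mathfrak{q}_i$, so $f^k g_i \in \mathfrak{q}_1\cap\cdots\cap\mathfrak{q}_m = I$ while $g_i\notin I$, and taking the least such $k$ exhibits $f$ as a zero-divisor. For the reverse inclusion $\mathcal{Z}\subseteq\bigcup\mathfrak{p}_i$, if $f\overline{a}=\overline{0}$ with $\overline{a}\neq\overline{0}$, choose $a\in S$ a representative not in $I$; then $a\notin\mathfrak{q}_i$ for some $i$, and $fa\in I\subseteq\mathfrak{q}_i$ forces $f\in\mathrm{rad}(\mathfrak{q}_i)=\mathfrak{p}_i$ because $\mathfrak{q}_i$ is primary. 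This establishes the first assertion.

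Finally I would show the $\mathfrak{p}_i$ are exactly the associated primes of $I$, i.e. the primes of the form $(I\colon f)$. One direction: the argument above producing $g_i$ with least exponent $k$ such that $f^{k}g_i\in I$ shows, after replacing $g_i$ by $f^{k-1}g_i$, that $\mathfrak{p}_i\subseteq (I\colon f^{k-1}g_i)$; a short check that $(I\colon f^{k-1}g_i)\subseteq\mathfrak{p}_i$ (any $h$ in the colon ideal satisfies $hf^{k-1}g_i\in I\subseteq\mathfrak{q}_i$, and since $f^{k-1}g_i\notin\mathfrak{q}_i$ primariness gives $h\in\mathfrak{p}_i$) yields $\mathfrak{p}_i=(I\colon f^{k-1}g_i)$, so each $\mathfrak{p}_i$ is associated. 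Conversely, if $\mathfrak{p}=(I\colon f)$ is prime, then in particular $\mathfrak{p}$ consists of zero-divisors, so by the first part $\mathfrak{p}\subseteq\bigcup\mathfrak{p}_i$; prime avoidance gives $\mathfrak{p}\subseteq\mathfrak{p}_i$ for some $i$, and the reverse containment follows from localizing at $\mathfrak{p}_i$ (the primary components not contained in $\mathfrak{p}_i$ become the whole ring, so $I_{\mathfrak{p}_i}$ is $\mathfrak{p}_i$-primary and $\mathfrak{p}_i$ is the unique associated prime contained in it). The only mild subtlety—the step I would be most careful about—is this last minimality argument, ensuring that the inclusion $\mathfrak{p}\subseteq\mathfrak{p}_i$ is forced to be an equality; everything else is a direct unwinding of the definitions of primary ideal and radical, and in the paper it can reasonably be deferred entirely to the cited reference in \cite{monalg-rev}.
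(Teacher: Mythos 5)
The paper offers no proof of this statement at all---it is quoted directly from \cite[Lemma~2.1.19, Corollary~2.1.30]{monalg-rev}---so there is nothing internal to compare your argument against; I am judging it on its own. Your first half, the identity $\mathcal{Z}_S(S/I)=\bigcup_{i=1}^m\mathfrak{p}_i$, is correct and is the standard argument: irredundancy produces $g_i\in\bigl(\bigcap_{j\neq i}\mathfrak{q}_j\bigr)\setminus\mathfrak{q}_i$, the least power trick shows each $f\in\mathfrak{p}_i$ is a zero-divisor, and the reverse inclusion is a direct use of primariness. The reduction to $I=(0)$ is harmless but unused.

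The second half has two genuine gaps. First, to show each $\mathfrak{p}_i$ is associated you assert $\mathfrak{p}_i\subseteq(I\colon f^{k-1}g_i)$, where $k$ is minimal with $f^kg_i\in I$ for a \emph{single} $f\in\mathfrak{p}_i$. That construction only puts that one $f$ into the colon ideal, not all of $\mathfrak{p}_i$. Concretely, take $I=\mathfrak{q}_1=(x^2,y^2)$ in $K[x,y]$, so $m=1$, $g_1=1$, $\mathfrak{p}_1=(x,y)$; with $f=x$ one gets $k=2$ and $(I\colon x)=(x,y^2)$, which does not contain $y$. The repair is to use that $\mathfrak{p}_i$ is finitely generated ($S$ is Noetherian), so $\mathfrak{p}_i^{\,n}\subseteq\mathfrak{q}_i$ for some $n$; taking $n$ minimal with $\mathfrak{p}_i^{\,n}g_i\subseteq I$ and picking $h\in\mathfrak{p}_i^{\,n-1}g_i\setminus I$ gives $\mathfrak{p}_i h\subseteq I$, and your (correct) primariness argument then yields $(I\colon h)=\mathfrak{p}_i$. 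Second, in the converse direction, after prime avoidance gives $\mathfrak{p}=(I\colon f)\subseteq\mathfrak{p}_i$, you try to force $\mathfrak{p}_i\subseteq\mathfrak{p}$ by localizing at $\mathfrak{p}_i$ and claiming $I_{\mathfrak{p}_i}$ is $\mathfrak{p}_i$-primary. That claim fails whenever $\mathfrak{p}_i$ is an embedded prime (every component $\mathfrak{q}_j$ with $\mathfrak{p}_j\subseteq\mathfrak{p}_i$ survives the localization), and the strategy itself cannot work: $\mathfrak{p}$ could be a smaller associated prime $\mathfrak{p}_j\subsetneq\mathfrak{p}_i$, in which case $\mathfrak{p}_i\not\subseteq\mathfrak{p}$. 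What you must prove is that $\mathfrak{p}$ equals \emph{some} $\mathfrak{p}_j$; the clean route is $(I\colon f)=\bigcap_j(\mathfrak{q}_j\colon f)$, where $(\mathfrak{q}_j\colon f)=S$ if $f\in\mathfrak{q}_j$ and is $\mathfrak{p}_j$-primary otherwise, so taking radicals gives $\mathfrak{p}=\bigcap_{f\notin\mathfrak{q}_j}\mathfrak{p}_j$, and a prime that equals a finite intersection of primes must equal one of them.
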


\begin{proposition}{\rm(\cite{geramita-cayley-bacharach}, 
\cite[p.~411]{affine-codes})}\label{behavior-hilbert-function} Let
$I=I(X)$ be the vanishing ideal of a set $X$ of 
affine points over a field $K$. Then, $H_I^a$ is increasing until
it reaches constant value $|X|$, and $\delta(C_{X}(d))$ is
decreasing, as a function of $d$, until 
it reaches constant value $1$. In particular, $\deg(S/I)=|X|$.
\end{proposition}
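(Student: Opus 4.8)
The plan is to reduce everything to elementary statements about the nested family of Reed--Muller-type codes $C_X(0)\subseteq C_X(1)\subseteq\cdots$ and about the ideal $I=I(X)$. First I would record the identification $H_I^a(d)=\dim_K(C_X(d))$: the restriction of ${\rm ev}$ to $S_{\leq d}$ has kernel $I\cap S_{\leq d}=I_{\leq d}$ and image $C_X(d)={\rm ev}(S_{\leq d})$, so $S_{\leq d}/I_{\leq d}\cong C_X(d)$ as $K$-vector spaces. From $S_{\leq d}\subseteq S_{\leq d+1}$ it follows at once that $C_X(d)\subseteq C_X(d+1)$ and that $H_I^a$ is non-decreasing, while $C_X(d)\subseteq K^{|X|}$ gives $H_I^a(d)\leq|X|$ for all $d$.

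Next I would show the bound $|X|$ is attained for $d\gg 0$ and that $H_I^a$ cannot stall below it. Writing $X=\{P_1,\dots,P_m\}$ with the points distinct, fix $i$; for each $j\neq i$ choose a coordinate in which $P_j$ and $P_i$ differ and let $\ell_j$ be the corresponding degree-one polynomial vanishing at $P_j$. Then $f_i:=\prod_{j\neq i}\ell_j$ has degree $m-1$, vanishes at every $P_j$ with $j\neq i$, and $f_i(P_i)\neq 0$, so ${\rm ev}(f_i)$ is a nonzero scalar multiple of the $i$-th standard basis vector of $K^m$; hence $C_X(m-1)=K^m$ and $H_I^a(d)=|X|$ for $d\geq m-1$. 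The core point is that $C_X(d_0)=C_X(d_0+1)$ forces $C_X(d_0+1)=C_X(d_0+2)$, and then one induces: given a monomial $t^a$ of degree $d_0+2$, write $t^a=t_i\,t^b$ with $\deg(t^b)=d_0+1$; by hypothesis ${\rm ev}(t^b)\in C_X(d_0+1)=C_X(d_0)$, so $t^b-g\in I$ for some $g\in S_{\leq d_0}$, whence $t_i(t^b-g)\in I$ and ${\rm ev}(t^a)={\rm ev}(t_i\,g)\in C_X(d_0+1)$. Combining these facts, $H_I^a$ is non-decreasing, bounded above by $|X|$, eventually equal to $|X|$, and never repeats a value below $|X|$, so it increases until it reaches the constant value $|X|$. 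Finally, since $S/I\cong K^{|X|}$ is finite-dimensional we have $\dim(S/I)=0$, so the affine Hilbert polynomial of $S/I$ is the constant $|X|$ and $\deg(S/I)=|X|$.

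For the minimum distance I would argue as follows. Since $C_X(d)\subseteq C_X(d+1)$, the minimum weight over the nonzero codewords of the larger code is at most that over the smaller, so $\delta(C_X(d+1))\leq\delta(C_X(d))$; and $\delta(C_X(d))\geq 1$ always, with equality once $C_X(d)=K^{|X|}$. To see the decrease is strict as long as the value exceeds $1$: pick $f\in S_{\leq d}$ with ${\rm ev}(f)$ of minimum weight $\delta=\delta(C_X(d))\geq 2$; then $f$ is nonzero at two distinct points $P,P'\in X$, and choosing a degree-one polynomial $\ell$ with $\ell(P)=0$ and $\ell(P')\neq 0$ (possible since $P\neq P'$), the polynomial $\ell f\in S_{\leq d+1}$ does not vanish at $P'$, hence $\ell f\notin I$ and ${\rm ev}(\ell f)$ is a nonzero codeword of $C_X(d+1)$ of weight at most $\delta-1$. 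Thus $\delta(C_X(d))$ strictly decreases while it is $\geq 2$, hence reaches $1$ after finitely many steps, and is constant thereafter.

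I do not anticipate a serious obstacle. The one step requiring care is the ``no stalling'' argument for $H_I^a$: one must factor a degree-$(d_0+2)$ monomial correctly and invoke that $I$ is an ideal (so $t_i\cdot I\subseteq I$) to push the reduction from degree $d_0$ up to degree $d_0+1$. Everything else — the interpolation polynomials $f_i$, the separating linear form $\ell$, and the fact that $\ell f\notin I$ — is immediate from the points of $X$ being distinct.
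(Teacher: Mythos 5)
Your proof is correct. Note that the paper itself gives no proof of this proposition: it is stated with citations to Geramita--Kreuzer--Robbiano and to \cite[p.~411]{affine-codes}, so there is no in-text argument to compare against. Your self-contained argument supplies exactly the standard reasoning behind those references: the identification $H_I^a(d)=\dim_K C_X(d)$ via $S_{\leq d}/I_{\leq d}\cong C_X(d)$; surjectivity of the evaluation map by interpolation with products of separating linear forms (which also yields $S/I\cong K^{|X|}$, hence $\dim(S/I)=0$ and $\deg(S/I)=\dim_K(S/I)=|X|$ under the paper's convention for zero-dimensional quotients); the ``no stalling'' step, where the factorization $t^a=t_i t^b$ together with $t_i\cdot I\subseteq I$ pushes $C_X(d_0)=C_X(d_0+1)$ up to all higher degrees; and the strict decrease of the minimum distance by multiplying a minimum-weight word's polynomial by a linear form killing one support point but not another. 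All steps check out, including the edge cases ($|X|=1$, and constancy at $1$ once $C_X(d)=K^{|X|}$). The one thing worth making explicit if you write this up is that $C_X(d_0+2)$ is spanned by evaluations of monomials of degree at most $d_0+2$, which is what licenses checking the inclusion $C_X(d_0+2)\subseteq C_X(d_0+1)$ monomial by monomial; you use this implicitly and it is immediate, but it is the linchpin of the stalling argument.
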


The least integer $r_0\geq 0$ such that 
$H_I^a(d)=|X|$ for $d\geq r_0$, denoted ${\rm reg}(H_I^a)$, is the
\textit{index of regularity} of the affine Hilbert function. 

\begin{proposition}{\rm(Additivity of the degree
\cite[Proposition~2.5]{prim-dec-critical})}\label{additivity-of-the-degree}
If $I$ is an ideal of $S$ and 
$I=\mathfrak{q}_1\bigcap\cdots\bigcap\mathfrak{q}_m$ 
is an irredundant primary
decomposition, then\index{degree!is
additive}
$$
\deg(S/I)=\sum_{{\rm ht}(\mathfrak{q}_i)={\rm
ht}(I)}\hspace{-3mm}\deg(S/\mathfrak{q}_i).$$
\end{proposition}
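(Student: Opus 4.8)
The plan is to reduce to the graded situation and then combine additivity of Hilbert polynomials on short exact sequences with a prime–filtration argument. First I would recall that $\deg(S/I)=\deg(S[u]/I^{h})$ and that homogenization commutes with finite intersections, so that $I^{h}=\mathfrak{q}_1^{h}\cap\cdots\cap\mathfrak{q}_m^{h}$ is a primary decomposition of the homogeneous ideal $I^{h}\subset S[u]$, with ${\rm rad}(\mathfrak{q}_i^{h})=\mathfrak{p}_i^{h}$ prime and with ${\rm ht}(\mathfrak{q}_i^{h})={\rm ht}(\mathfrak{q}_i)$, ${\rm ht}(I^{h})={\rm ht}(I)$ (dehomogenizing recovers the original ideals, and Krull dimension goes up by exactly one under homogenization). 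Hence it suffices to prove the formula for a homogeneous ideal in a standard graded polynomial ring, which I will again denote by $S$ with ideal $I$.

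In the graded case I would use the key fact that the Hilbert polynomial is additive on short exact sequences of finitely generated graded modules: from $0\to A\to B\to C\to 0$ one gets $h_B=h_A+h_C$. Consequently, for a finitely generated graded module $M$ with a prime filtration $0=M_0\subset M_1\subset\cdots\subset M_t=M$, $M_i/M_{i-1}\cong(S/\mathfrak{p}_i)(-a_i)$, the degree $\deg(M)$ equals $\sum\deg(S/\mathfrak{p}_i)$, summed over the indices $i$ with $\dim(S/\mathfrak{p}_i)=\dim(M)$. Localizing this filtration at a minimal prime $\mathfrak{p}$ of $M$ with $\dim(S/\mathfrak{p})=\dim(M)$ shows that the number of such indices $i$ with $\mathfrak{p}_i=\mathfrak{p}$ equals the length $\ell_{S_\mathfrak{p}}(M_\mathfrak{p})$, so that
$$
\deg(M)=\sum_{\mathfrak{p}}\ell_{S_\mathfrak{p}}(M_\mathfrak{p})\,\deg(S/\mathfrak{p}),
$$
the sum ranging over the minimal primes $\mathfrak{p}$ of $M$ with $\dim(S/\mathfrak{p})=\dim(M)$.

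Then I would apply this identity twice. Applied to $M=S/\mathfrak{q}_i$ with ${\rm ht}(\mathfrak{q}_i)={\rm ht}(I)$, whose unique minimal prime is $\mathfrak{p}_i$ and which has $\dim(S/\mathfrak{q}_i)=\dim(S/I)$, it gives $\deg(S/\mathfrak{q}_i)=\ell_{S_{\mathfrak{p}_i}}((S/\mathfrak{q}_i)_{\mathfrak{p}_i})\,\deg(S/\mathfrak{p}_i)$. Applied to $M=S/I$, the minimal primes of top dimension are precisely the $\mathfrak{p}_i={\rm rad}(\mathfrak{q}_i)$ with ${\rm ht}(\mathfrak{q}_i)={\rm ht}(I)$, so $\deg(S/I)=\sum_i\ell_{S_{\mathfrak{p}_i}}((S/I)_{\mathfrak{p}_i})\,\deg(S/\mathfrak{p}_i)$. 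It remains to match the lengths: localizing $I=\bigcap_j\mathfrak{q}_j$ at such a minimal prime $\mathfrak{p}_i$ kills every other component, since for $j\neq i$ one has ${\rm rad}(\mathfrak{q}_j)\not\subseteq\mathfrak{p}_i$ (otherwise $\mathfrak{p}_i$ would fail to be minimal in an irredundant decomposition), hence $(\mathfrak{q}_j)_{\mathfrak{p}_i}=S_{\mathfrak{p}_i}$ and $(S/I)_{\mathfrak{p}_i}=(S/\mathfrak{q}_i)_{\mathfrak{p}_i}$. Combining the three identities yields $\deg(S/I)=\sum_{{\rm ht}(\mathfrak{q}_i)={\rm ht}(I)}\deg(S/\mathfrak{q}_i)$.

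The step I expect to be the main obstacle is the reduction to the graded case: one must check carefully that homogenization turns an irredundant primary decomposition into a primary decomposition whose top-dimensional components still correspond bijectively to the $\mathfrak{q}_i$ with ${\rm ht}(\mathfrak{q}_i)={\rm ht}(I)$, and that radicals, primality and heights are preserved; everything after that is the standard ``associativity of multiplicities'' bookkeeping. An alternative that sidesteps homogenization is to develop the short–exact–sequence and prime–filtration arguments directly for the affine Hilbert function $H_I^a$, at the cost of redoing that setup in the non-graded setting.
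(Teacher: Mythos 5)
Your argument is correct. Note first that the paper does not prove this proposition at all: it is imported verbatim from \cite[Proposition~2.5]{prim-dec-critical}, so there is no in-paper proof to compare against. What you give is the standard ``associativity of multiplicities'' proof, and all of its ingredients check out: the reduction to the graded case works because a homogeneous $G\in S[u]$ lies in $J^h$ if and only if its dehomogenization $G(t,\ldots,1)$ lies in $J$, from which one gets $(I\cap J)^h=I^h\cap J^h$, that $\mathfrak{q}^h$ is $\mathfrak{p}^h$-primary when $\mathfrak{q}$ is $\mathfrak{p}$-primary, and that heights are preserved since $\dim(S[u]/J^h)=\dim(S/J)+1$; the prime-filtration/localization bookkeeping and the identification $(S/I)_{\mathfrak{p}_i}=(S/\mathfrak{q}_i)_{\mathfrak{p}_i}$ at a minimal prime of top dimension are exactly right (irredundancy guarantees $\mathfrak{q}_j\not\subseteq\mathfrak{p}_i$ for $j\neq i$). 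The step you flag as the main obstacle is indeed the only place requiring care, and it goes through as indicated. Your alternative of working directly with the affine Hilbert function would also succeed but would force you to re-derive additivity on short exact sequences in the filtered (non-graded) setting, which is why passing through $I^h$ is the cleaner route.
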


\begin{lemma}{\cite[p.~389]{cocoa-book}}\label{primdec-ixx} 
Let $X$ be a finite subset of
$\mathbb{A}^s$, let $P$ be a point in $X$, $P=(p_1,\ldots,p_s)$, and
let $I_{P}$ be the vanishing ideal 
of $P$. Then, $I_P$ is a prime ideal of height $s$, 
\begin{equation*}
I_P=(t_1-p_1,\ldots,t_s-p_s),\ \deg(S/I_P)=1, 
\end{equation*}
and $I(X)=\bigcap_{P\in X}I_{P}$ is the primary
decomposition of $I(X)$.  
\end{lemma}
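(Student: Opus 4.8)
The plan is to deduce all four assertions from the single observation that $S/I_P$ is isomorphic to the ground field $K$. First I would identify $I_P$ with the ideal $\mathfrak{m}_P:=(t_1-p_1,\ldots,t_s-p_s)$. The inclusion $\mathfrak{m}_P\subseteq I_P$ is immediate, since each generator vanishes at $P$. For the reverse inclusion, given $f\in I_P$ I would write the Taylor expansion of $f$ about $P$, i.e. $f=f(P)+\sum_{i=1}^s(t_i-p_i)g_i$ for suitable $g_i\in S$ (equivalently, apply the change of variables $t_i\mapsto t_i+p_i$); since $f(P)=0$ this gives $f\in\mathfrak{m}_P$. Alternatively, one notes that the $K$-algebra map $S\to K$, $t_i\mapsto p_i$, is onto with kernel $\mathfrak{m}_P$, so $S/\mathfrak{m}_P\cong K$ is a field, hence $\mathfrak{m}_P$ is maximal; as $I_P$ is a proper ideal ($1\notin I_P$) containing $\mathfrak{m}_P$, equality follows. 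In particular $I_P$ is prime.

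Next I would compute the height. Since $S/I_P\cong K$ has Krull dimension $0$ and $S$ is a finitely generated $K$-algebra that is a domain (hence catenary and equidimensional), the dimension formula $\mathrm{ht}(I_P)+\dim(S/I_P)=\dim(S)=s$ yields $\mathrm{ht}(I_P)=s$; one can also exhibit the chain of primes $(0)\subsetneq(t_1-p_1)\subsetneq\cdots\subsetneq(t_1-p_1,\ldots,t_s-p_s)$ and invoke the principal ideal theorem. For the degree, the affine Hilbert function is $H_{I_P}^a(d)=\dim_K(S_{\leq d}/(I_P)_{\leq d})=\dim_K(K)=1$ for all $d\geq 0$, so the Hilbert polynomial is the constant $1$, and by the definition of degree recalled at the start of Section~\ref{degree-variety-section} (the case $k=0$), $\deg(S/I_P)=1$.

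Finally I would treat the decomposition of $I(X)$. The equality $I(X)=\bigcap_{P\in X}I_P$ is just a rephrasing of the definition of the vanishing ideal: a polynomial vanishes on $X$ iff it vanishes at every point of $X$, iff it lies in every $I_P$. Each $I_P$ is prime, hence primary, and for distinct points the radicals $I_P$ are pairwise distinct (distinct maximal ideals). It remains to verify irredundancy: fixing $Q=(q_1,\ldots,q_s)\in X$, for each $P=(p_1,\ldots,p_s)\neq Q$ choose an index $i(P)$ with $p_{i(P)}\neq q_{i(P)}$ and set $f:=\prod_{P\neq Q}(t_{i(P)}-p_{i(P)})$; then $f$ vanishes at every $P\neq Q$ but $f(Q)=\prod_{P\neq Q}(q_{i(P)}-p_{i(P)})\neq 0$, so $\bigcap_{P\neq Q}I_P\not\subseteq I_Q$. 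Hence $I(X)=\bigcap_{P\in X}I_P$ is an irredundant primary (indeed prime) decomposition.

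The argument is essentially routine; the only step demanding a little care is the height computation, where one must actually invoke the dimension formula for affine $K$-algebras (or the principal ideal theorem) rather than quote $\mathrm{ht}+\dim=\dim S$ without justification. Every other assertion is a direct consequence of $S/I_P\cong K$.
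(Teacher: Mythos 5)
Your proof is correct. The paper itself gives no argument for this lemma---it is quoted directly from Kreuzer--Robbiano \cite[p.~389]{cocoa-book}---so there is no in-paper proof to compare against; your self-contained argument (identifying $I_P$ with the maximal ideal $(t_1-p_1,\ldots,t_s-p_s)$ via the substitution $t_i\mapsto t_i+p_i$, reading off height $s$ and degree $1$ from $S/I_P\cong K$, and checking irredundancy of $\bigcap_{P\in X}I_P$ with an explicit separating polynomial) is the standard one and all steps, including the characteristic-independent ``Taylor expansion'' and the distinctness of the radicals, are sound.
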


\begin{lemma}\label{vila-delio-feb27-20} Let $X$ be a finite subset of 
$\mathbb{A}^s$ over a field $K$ and let $F=\{f_1,\ldots,f_r\}$ be a
set of polynomials of $S$. Then, the following conditions are equivalent.
\begin{enumerate}
\item[(a)] $(I(X)\colon(F))=I(X)$.
\item[(b)] $V_X(F)=\emptyset$.
\item[(c)] $(I(X),F)=S$.
\end{enumerate}
\end{lemma}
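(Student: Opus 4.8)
The plan is to prove the three conditions are equivalent by showing the cycle of implications $(a)\Rightarrow(b)\Rightarrow(c)\Rightarrow(a)$, using the primary decomposition $I(X)=\bigcap_{P\in X}I_P$ from Lemma~\ref{primdec-ixx} together with the fact that each $I_P$ is the maximal ideal of the point $P$. The key observation throughout is that, since $X$ is finite, $I(X)$ is a finite intersection of the maximal ideals $I_P$, so colon ideals and the ideal $(I(X),F)$ can be analyzed point by point.

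First I would handle $(b)\Rightarrow(c)$. If $V_X(F)=\emptyset$, then for every $P\in X$ some $f_j$ does not vanish at $P$, so $(I_P,F)=S$ for each $P\in X$ (the polynomial $f_j(P)$ is a nonzero constant modulo $I_P$). Hence $(I(X),F)$ is not contained in any $I_P$; but the $I_P$ are precisely the maximal ideals containing $I(X)$, so any proper ideal containing $I(X)$ would have to be contained in one of them. Therefore $(I(X),F)=S$. For $(c)\Rightarrow(a)$: if $(I(X),F)=S$, write $1=g+\sum_j h_jf_j$ with $g\in I(X)$. Given any $\alpha\in S$ with $\alpha f_j\in I(X)$ for all $j$ — that is, $\alpha\in (I(X)\colon(F))$ — multiply the identity by $\alpha$ to get $\alpha=\alpha g+\sum_j h_j(\alpha f_j)\in I(X)$; thus $(I(X)\colon(F))\subseteq I(X)$, and the reverse inclusion is automatic. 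Finally $(a)\Rightarrow(b)$, or contrapositively $\neg(b)\Rightarrow\neg(a)$: if there is a point $P\in V_X(F)\subseteq X$, then every $f_j$ vanishes at $P$; pick, using Lemma~\ref{primdec-ixx}, a polynomial $\alpha$ that vanishes on $X\setminus\{P\}$ but not at $P$ (for instance a product of linear forms $t_i-p_i'$ chosen to separate $P$ from each other point). Then for each $j$ the product $\alpha f_j$ vanishes at every point of $X$, so $\alpha f_j\in I(X)$, hence $\alpha\in(I(X)\colon(F))$; but $\alpha\notin I(X)$ since $\alpha(P)\neq0$. This shows $(I(X)\colon(F))\neq I(X)$, contradicting (a).

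The one point that needs a little care — and what I'd flag as the main (minor) obstacle — is the assertion in $(a)\Rightarrow(b)$ that one can produce a polynomial vanishing on $X\setminus\{P\}$ but not at $P$. This is immediate from the structure of the $I_P$: for each $Q\in X\setminus\{P\}$, $Q=(q_1,\dots,q_s)$, pick a coordinate $i$ with $p_i\neq q_i$ and take the linear form $t_i-q_i$, which vanishes at $Q$ but not at $P$; the product of these forms over all $Q\neq P$ is the desired $\alpha$. Equivalently, one may invoke that $I(X\setminus\{P\})\not\subseteq I_P$ because $I(X\setminus\{P\})=\bigcap_{Q\neq P}I_Q$ and this intersection is not contained in the distinct maximal ideal $I_P$ (none of the $I_Q$ is). Beyond this small separation-of-points argument, the proof is a routine chase through the definitions, so the write-up should be short.
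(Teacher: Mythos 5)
Your proof is correct, and it follows a genuinely different route from the paper's. The paper proves (a)$\Leftrightarrow$(b) directly in both directions and then attaches (c): for (a)$\Rightarrow$(b) it writes $(I(X)\colon(F))=\bigcap_i(\mathfrak{p}_i\colon(F))$ over the primary decomposition, notes that the component at a point of $V_X(F)$ drops out, and derives a contradiction from prime avoidance (an intersection of distinct maximal ideals cannot lie in one of them); for (c) it uses evaluation of a partition of unity $1=f+\sum g_if_i$ at a point of $V_X(F)$ to get $1=0$, and a maximal-ideal argument for (a)$\Rightarrow$(c). You instead run the clean cycle (a)$\Rightarrow$(b)$\Rightarrow$(c)$\Rightarrow$(a): your (a)$\Rightarrow$(b) replaces the colon-ideal/prime-avoidance computation with an explicit interpolating polynomial $\alpha$ separating $P$ from $X\setminus\{P\}$, which is more elementary and exhibits a concrete witness in $(I(X)\colon(F))\setminus I(X)$; and your (c)$\Rightarrow$(a) is a one-line algebraic manipulation (multiply the identity $1=g+\sum h_jf_j$ by $\alpha$) that the paper does not use, proving an implication the paper only gets by composing (c)$\Rightarrow$(b)$\Rightarrow$(a). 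Your (b)$\Rightarrow$(c) is essentially the paper's (a)$\Rightarrow$(c) argument; just note that the assertion that every maximal ideal containing $I(X)=\bigcap_{P}I_P$ equals some $I_P$ itself rests on the same prime-avoidance fact the paper cites, so you have not entirely avoided that ingredient --- you have only confined it to one implication. Net effect: your version is more constructive and arguably more self-contained, at the cost of being slightly longer in the (a)$\Rightarrow$(b) step; the paper's is shorter but leans more heavily on standard commutative-algebra lemmas about colon ideals of intersections.
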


\begin{proof} (a) $\Rightarrow$ (b): We can write $X=\{P_1,\ldots,P_m\}$ and
$I(X)=\bigcap_{i=1}^m\mathfrak{p}_i$, where $\mathfrak{p}_i$ is
equal to $I_{P_i}$, the vanishing ideal of $P_i$. We proceed by
contradiction. Assume that $V_X(F)\neq\emptyset$. Pick
$P_k$ in $V_X(F)$. Then, $f_i(P_k)=0$ and $f_i\in\mathfrak{p}_k$ for
all $i$. Note that $(\mathfrak{p}_k\colon(F))=(1)$. Therefore
\[
\bigcap_{i=1}^m\mathfrak{p}_i=I(X)=(I(X)\colon(F))=
\bigcap_{i=1}^m(\mathfrak{p}_i\colon(F))
=\bigcap_{i\neq k}(\mathfrak{p}_i\colon(F))\subset\mathfrak{p}_k.
\]
Hence
$\mathfrak{p}_i\subset(\mathfrak{p}_i\colon(F))\subset\mathfrak{p}_k$ 
for some $i\neq k$, see \cite[p.~74]{monalg-rev}. 
Thus, $\mathfrak{p}_i=\mathfrak{p}_k$, a
contradiction.

(b) $\Rightarrow$ (a): We proceed by contradiction. Assume that
$I(X)\subsetneq(I(X)\colon(F))$. Pick a polynomial
$g$ such that $gf_i\in I(X)$ for all $i$ and $g\notin I(X)$. 
Then, there is $P$ in $X$ such that $g(P)\neq 0$. 
Thus, $f_i(P)=0$ for all $i$, that is, $P\in
V_X(F)$, a contradiction.

(c) $\Rightarrow$ (b): We can write $1=f+\sum_{i=1}^rg_if_i$, where $f\in
I(X)$ and $g_i\in S$ for all $i$. If $V_X(F)\neq\emptyset$, picking
$\alpha\in V_X(F)$ and evaluating the last equality at $\alpha$, we
get $1=0$, a contradiction.

(a) $\Rightarrow$ (c): If $(I(X),F)\subsetneq S$, pick a maximal ideal
$\mathfrak{m}$ of $S$ that contains $(I(X),F)$. Then,
by Lemma~\ref{primdec-ixx} and \cite[2.1.48, p.~74]{monalg-rev},
$\mathfrak{m}=I_{P_k}$ for some $P_k$ in 
$X$. Thus, $F\subset I_{P_k}$ and $P_k\in V_X(F)$, a contradiction
because conditions (a) and (b) are equivalent. 
\end{proof} 

The next result gives a sufficient conditions for an ideal of
dimension zero to be radical. As usual we denote the derivative of a
univariate polynomial $f$ by $f'$.

\begin{lemma}{\rm(Seidenberg's lemma
\cite{seidenberg})}\label{seidenberg-lemma} Let
$I\subsetneq S$ be an ideal of dimension zero. If $I$ contains a
univariate polynomial $f_i\in K[t_i]$ with
$\gcd(f_i,f_i')=1$ for $i=1,\ldots,s$, then $I$ is an intersection of
finitely many maximal ideals, and any proper ideal of $S$ containing
$I$ is a radical ideal.  
\end{lemma}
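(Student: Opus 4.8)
\textbf{Proof proposal for Seidenberg's Lemma (Lemma~\ref{seidenberg-lemma}).}

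The plan is to split the statement into two assertions: first, that $I$ is a finite intersection of maximal ideals; second, that every proper ideal $J$ containing $I$ is radical. For the first assertion I would invoke a primary decomposition $I=\mathfrak{q}_1\cap\cdots\cap\mathfrak{q}_n$ with $\mathrm{rad}(\mathfrak{q}_i)=\mathfrak{p}_i$; since $\dim(S/I)=0$, every $\mathfrak{p}_i$ is a maximal ideal. The key point is that each $\mathfrak{q}_i$ already equals $\mathfrak{p}_i$, i.e.\ $I$ is radical to begin with. To see this, fix $i$ and consider $\mathfrak{q}_i$. For each variable $t_j$, the hypothesis $\gcd(f_j,f_j')=1$ says $f_j$ is separable, hence a product of distinct irreducible factors in $K[t_j]$; thus $(f_j)$ is a radical ideal of $K[t_j]$. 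Since $f_j\in I\subset\mathfrak{q}_i\subset\mathfrak{p}_i$ and $\mathfrak{p}_i$ is prime, some irreducible factor $g_j$ of $f_j$ lies in $\mathfrak{p}_i$. I then want to show that the image of $f_j$ in the local ring $S_{\mathfrak{p}_i}/\mathfrak{q}_iS_{\mathfrak{p}_i}$, or more simply the behaviour modulo $\mathfrak{q}_i$, forces $g_j\in\mathfrak{q}_i$.

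The concrete mechanism I would use: work modulo $\mathfrak{q}_i$ and show $f_j$ generates a radical ideal there. Write $f_j=g_{j,1}\cdots g_{j,\ell}$ with the $g_{j,k}$ distinct irreducibles. In the Artinian local ring $A:=S_{\mathfrak{p}_i}$, exactly one $g_{j,k}$ lies in $\mathfrak{p}_iA$ (the others become units), say $g_{j,1}$, so $f_j$ and $g_{j,1}$ generate the same ideal of $A$, and that ideal is contained in $\mathfrak{q}_iA$. Now $\mathfrak{p}_iA$ is the nilradical of $A/\mathfrak{q}_iA$ wait — more carefully, $A/\mathfrak{q}_iA$ is Artinian local with maximal ideal $\overline{\mathfrak{p}_i}$, so $\overline{\mathfrak{p}_i}^{N}=0$ for some $N$. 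The element $\overline{g_{j,1}}$ lies in $\overline{\mathfrak{p}_i}$, hence is nilpotent in $A/\mathfrak{q}_iA$, i.e.\ $g_{j,1}^{N}\in\mathfrak{q}_iA$ for some $N$. So far this only shows $g_{j,1}$ is in the radical; I need it actually in $\mathfrak{q}_iA$. This is where the separability enters decisively: since $g_{j,1}$ and $f_j/g_{j,1}$ are coprime in $K[t_j]$ (distinct irreducibles), there exist $p,q\in K[t_j]$ with $p g_{j,1}+q(f_j/g_{j,1})=1$; multiplying by $g_{j,1}$ gives $g_{j,1}=p g_{j,1}^2+q f_j$. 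Iterating, one shows $g_{j,1}\in(g_{j,1}^N)+(f_j)$ for every $N$ (a standard "coprime power" argument), and combined with $g_{j,1}^N\in\mathfrak{q}_iA$ and $f_j\in\mathfrak{q}_iA$ this yields $g_{j,1}\in\mathfrak{q}_iA$. Therefore the linear-type generators $g_{j,1}\in K[t_j]$, one for each $j$, all lie in $\mathfrak{q}_iA$; the ideal they generate is a maximal ideal of $S$ (quotient is $\prod_j K[t_j]/(g_{j,1})$, a field since each $g_{j,1}$ is irreducible... one must be slightly careful that the product of residue fields is itself a field — this holds because it is a zero-dimensional local ring with trivial nilradical, hence a field), so it equals $\mathfrak{p}_i$, forcing $\mathfrak{q}_iA=\mathfrak{p}_iA$ and hence $\mathfrak{q}_i=\mathfrak{p}_i$. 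Thus $I=\mathfrak{p}_1\cap\cdots\cap\mathfrak{p}_n$ is radical, an intersection of finitely many maximal ideals.

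For the second assertion, let $J$ be a proper ideal with $I\subset J$. By the correspondence for finite intersections of maximal ideals, $J$ equals the intersection of those $\mathfrak{p}_i$ that contain $J$ — indeed every maximal ideal containing $J$ contains $I$, hence is one of the $\mathfrak{p}_i$, and conversely $J=\bigcap_{\mathfrak{p}_i\supset J}\mathfrak{p}_i$ follows from the Chinese Remainder Theorem applied to $S/I\cong\prod_i S/\mathfrak{p}_i$ (a product of fields, whose ideals are exactly products of a subset of the factors). Such an intersection of maximal ideals is radical. The main obstacle in the argument is the step showing $g_{j,1}\in\mathfrak{q}_i$ rather than merely in its radical — i.e.\ genuinely using $\gcd(f_j,f_j')=1$ and not just that $\mathfrak{p}_i$ is maximal; everything else is bookkeeping with primary decomposition and CRT. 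An alternative, perhaps cleaner, route to the first assertion that I would also consider is to argue directly that $S/I$ has no nonzero nilpotents: if $\overline{h}^2=0$ in $S/I$ then $h$ vanishes (to first order) on every point defined by the separable system, and separability of each $f_j$ rules out embedded/fat structure coordinate-by-coordinate; but the primary-decomposition argument above is the most self-contained given what the excerpt has already set up.
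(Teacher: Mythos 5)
The paper offers no proof of this lemma (it is quoted from Seidenberg's article), so I am judging your argument on its own terms. Your overall structure is sound: take an irredundant primary decomposition $I=\mathfrak{q}_1\cap\cdots\cap\mathfrak{q}_n$, note every associated prime $\mathfrak{p}_i$ is maximal since $\dim(S/I)=0$, and try to show $\mathfrak{q}_i=\mathfrak{p}_i$. The step producing, for each $j$, the unique irreducible factor $g_{j,1}$ of $f_j$ lying in $\mathfrak{p}_i$, and then showing $g_{j,1}\in\mathfrak{q}_i$, is correct (your coprime-power argument works; more directly, $f_j/g_{j,1}\notin\mathfrak{p}_i$ is a unit in $S_{\mathfrak{p}_i}$, so $g_{j,1}\in f_jS_{\mathfrak{p}_i}\subset\mathfrak{q}_iS_{\mathfrak{p}_i}$). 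The genuine gap is the final step: the ideal $(g_{1,1},\ldots,g_{s,1})$ is in general \emph{not} maximal. Its quotient is not the direct product "$\prod_j K[t_j]/(g_{j,1})$" but the tensor product $K[t_1]/(g_{1,1})\otimes_K\cdots\otimes_K K[t_s]/(g_{s,1})$, which is typically neither a field nor local: already $\mathbb{F}_2[t_1,t_2]/(t_1^2+t_1+1,\,t_2^2+t_2+1)\cong\mathbb{F}_4\otimes_{\mathbb{F}_2}\mathbb{F}_4\cong\mathbb{F}_4\times\mathbb{F}_4$, so $(t_1^2+t_1+1,\,t_2^2+t_2+1)$ is an intersection of two distinct maximal ideals. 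Your parenthetical repair ("a zero-dimensional local ring with trivial nilradical, hence a field") fails on both counts: the ring is not local, and the triviality of the nilradical is precisely the nontrivial content. It requires each $g_{j,1}$ to be \emph{separable} over $K$ (a tensor product of finite separable field extensions is \'etale, hence reduced), which is strictly stronger than the only consequence of the hypothesis you actually invoke, namely that the irreducible factors of each $f_j$ are distinct. Over an imperfect field the distinction is fatal: for $K=\mathbb{F}_p(u)$ and $f_j=t_j^p-u$, each $f_j$ is irreducible, hence squarefree, yet $(t_1-t_2)^p=f_1-f_2\in(f_1,f_2)$ while $t_1-t_2\notin(f_1,f_2)$, so the conclusion fails; the hypothesis $\gcd(f_j,f_j')=1$ is exactly what rules this out and must be used in full.

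The proof is repairable along your lines: each $g_{j,1}$ divides the separable $f_j$ and is therefore separable, so $S/(g_{1,1},\ldots,g_{s,1})$ is a finite product of fields and $(g_{1,1},\ldots,g_{s,1})=\mathfrak{m}_1\cap\cdots\cap\mathfrak{m}_r$ with each $\mathfrak{m}_k$ maximal; since this ideal sits inside the prime $\mathfrak{p}_i$, some $\mathfrak{m}_k$ equals $\mathfrak{p}_i$, and localizing at $\mathfrak{p}_i$ kills the other components, giving $\mathfrak{p}_iS_{\mathfrak{p}_i}=(g_{1,1},\ldots,g_{s,1})S_{\mathfrak{p}_i}\subset\mathfrak{q}_iS_{\mathfrak{p}_i}$ and hence $\mathfrak{q}_i=\mathfrak{p}_i$. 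But this reduces the problem to the reducedness of a tensor product of separable extensions, i.e.\ essentially to the $s$-variable case of the lemma for the ideal $(g_{1,1},\ldots,g_{s,1})$ itself; the standard route avoids this by inducting on $s$ (factor $f_s=g_1\cdots g_k$ into distinct irreducibles, use comaximality to write $I=\bigcap_k(I+(g_k))$, and pass to $L[t_1,\ldots,t_{s-1}]$ with $L=K[t_s]/(g_k)$, noting that the B\'ezout identity $\gcd(f_j,f_j')=1$ persists over $L$). Two minor points: $S_{\mathfrak{p}_i}$ is a regular local ring of dimension $s$, not Artinian (the Artinian ring is $S_{\mathfrak{p}_i}/\mathfrak{q}_iS_{\mathfrak{p}_i}$); and your CRT argument for the second assertion is fine once the first is in place.
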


If $K=\mathbb{F}_q$ is a finite field, by taking $f_j=t_j^q-t_j$ for
$j=1,\ldots,s$, the next result follows directly from Seidenberg's
lemma.

\begin{proposition}\label{feb28-20} Let $X$ be a finite subset of an affine space
$\mathbb{A}^s$ over a field $K$. For each $1\leq j\leq s$, there is a univariate
polynomial $f_j$ in $K[t_j]$ that vanishes at all points of $X$ 
such that $\gcd(f_j,f_j')=1$, and any proper ideal of $S$ containing $I(X)$ 
is a radical ideal. 
\end{proposition}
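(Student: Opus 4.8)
The plan is to apply Seidenberg's lemma (Lemma~\ref{seidenberg-lemma}) to the vanishing ideal $I=I(X)$; the only real work is to produce, for each coordinate, a separable univariate polynomial that vanishes on $X$. If $X=\emptyset$ then $I(X)=S$, no proper ideal of $S$ contains it, and one may take $f_j=1$ with $\gcd(1,0)=1$; so the statement is trivial in that case and I assume $X\neq\emptyset$ henceforth.

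First I would construct the $f_j$. Fix $1\le j\le s$ and let $\pi_j\colon X\to K$ be the map sending a point of $X$ to its $j$-th coordinate. Its image $\pi_j(X)=\{c_{j,1},\ldots,c_{j,n_j}\}$ is a finite set of \emph{distinct} elements of $K$ --- here only the finiteness of $X$ is used, and $K$ itself need not be finite. Set
\[
f_j:=\prod_{i=1}^{n_j}(t_j-c_{j,i})\ \in\ K[t_j].
\]
By construction $f_j$ vanishes at every $P\in X$, since the $j$-th coordinate of $P$ equals some $c_{j,i}$; thus $f_j\in I(X)$. To check $\gcd(f_j,f_j')=1$, observe that for each root $c_{j,i}$ one has $f_j'(c_{j,i})=\prod_{k\neq i}(c_{j,i}-c_{j,k})\neq 0$ because the $c_{j,k}$ are distinct; hence $f_j$ and $f_j'$ have no common root in an algebraic closure of $K$, and since the gcd is unchanged under field extension, $\gcd(f_j,f_j')=1$ in $K[t_j]$. (When $K=\mathbb{F}_q$ one may instead take the simpler choice $f_j=t_j^q-t_j=\prod_{c\in\mathbb{F}_q}(t_j-c)$, recovering the remark that precedes the statement.)

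Finally I would verify the hypotheses of Seidenberg's lemma for $I=I(X)$. Since $X\neq\emptyset$ we have $I(X)\subsetneq S$, and by Lemma~\ref{primdec-ixx} the equality $I(X)=\bigcap_{P\in X}I_P$ exhibits $I(X)$ as a finite intersection of the prime ideals $I_P$, each of height $s$; hence $\dim(S/I(X))=0$, i.e., $I(X)$ has dimension zero. We have just produced, for every $j$, a univariate polynomial $f_j\in K[t_j]$ lying in $I(X)$ with $\gcd(f_j,f_j')=1$. Seidenberg's lemma then yields that any proper ideal of $S$ containing $I(X)$ is radical, which is exactly the assertion. I do not anticipate a genuine obstacle: the only points that need care are that the construction of $f_j$ works over an arbitrary field precisely because $\pi_j(X)$ is finite, and that separability in positive characteristic is guaranteed here by the irreducible factors of $f_j$ being \emph{linear}, so that their derivatives do not vanish.
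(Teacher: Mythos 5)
Your proposal is correct and follows essentially the same route as the paper: both construct $f_j$ as the product of $t_j-c$ over the distinct $j$-th coordinates $c$ of points of $X$, note that this separable polynomial satisfies $\gcd(f_j,f_j')=1$, and conclude via Seidenberg's lemma. Your explicit verification of the zero-dimensionality hypothesis and the handling of $X=\emptyset$ are minor additions that the paper leaves implicit, not a different argument.
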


\begin{proof} Let $P_1,\ldots,P_m$ be the points of $X$. We can write
$P_i=(p_{i,1},\ldots,p_{i,s})$ with $p_{i,j}\in K$ for all $1\leq
i\leq m$ and $1\leq j\leq s$. For each $1\leq j\leq s$ consider the
set 
$$
D_j:=\{p_{i,j}\colon i=1,\ldots,m\}=\{a_{1,j},\ldots,a_{d_j,j}\},
$$
where $a_{1,j},\ldots,a_{d_j,j}$ are distinct elements of $K$ and
$d_j=|D_j|$ for $j=1,\ldots,s$. The univariate polynomials given by 
$$
f_j:=(t_j-a_{1,j})\cdots(t_j-a_{d_j,j}),\ j=1,\ldots,s, 
$$
vanish at all points of $X$. Each $f_j$ is a separable 
polynomial of $K[t_j]$. Hence, $f_j'$ is
relatively prime to $f_j$ \cite[Theorem~4.5, p.~231]{JacI}. Thus, the result follows from
Lemma~\ref{seidenberg-lemma}.
\end{proof}

The next result is an analog of \cite[Lemma~3.4]{rth-footprint} for affine
spaces.

\begin{lemma}{\rm(cf.~\cite[Proposition~6.2.12,
p.~262]{Kreuzer-linear-algebra})}
\label{degree-formula-zeros-affine}
Let $X$ be a finite subset of $\mathbb{A}^s$ and 
let $I(X)$ be its vanishing ideal. If $F$ is a finite subset of $S$,
then
$$
|V_X(F)|=
\begin{cases}
\deg(S/(I(X),F))&\mbox{if }\, (I(X)\colon (F))\neq
I(X),\\ 
0&\mbox{if }\, (I(X)\colon (F))=I(X).
\end{cases}
$$
\end{lemma}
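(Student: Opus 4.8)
The plan is to prove the formula for $|V_X(F)|$ by reducing to the two cases in the statement and using the additivity of the degree together with the primary decomposition of $I(X)$. First I would handle the second case, where $(I(X)\colon(F))=I(X)$. By Lemma~\ref{vila-delio-feb27-20}, this is equivalent to $V_X(F)=\emptyset$, so $|V_X(F)|=0$, as claimed. (One could also note that in this case $(I(X),F)=S$ by the same lemma, so $\deg(S/(I(X),F))=0$ anyway; but the statement already separates the cases, so there is nothing to check beyond invoking the equivalence.)

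For the first case, assume $(I(X)\colon(F))\neq I(X)$, equivalently $V_X(F)\neq\emptyset$. Write $X=\{P_1,\ldots,P_m\}$ and let $I_{P_i}$ be the vanishing ideal of $P_i$, so by Lemma~\ref{primdec-ixx} we have $I(X)=\bigcap_{i=1}^m I_{P_i}$ with each $I_{P_i}$ a prime ideal of height $s$ and $\deg(S/I_{P_i})=1$. Set $J:=(I(X),F)$. The key observation is that a point $P_i$ lies in $V_X(F)$ if and only if $F\subset I_{P_i}$ (since $P_i\in X$ already forces $I(X)\subset I_{P_i}$), which happens if and only if $J\subset I_{P_i}$. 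Thus I claim $\mathrm{rad}(J)=\bigcap_{P_i\in V_X(F)} I_{P_i}$: indeed, $J\subset I_{P_i}$ exactly for those $i$, and conversely by Proposition~\ref{feb28-20} the ideal $J$, being a proper ideal containing $I(X)$, is radical, so $J=\mathrm{rad}(J)=\bigcap_{P_i\in V_X(F)} I_{P_i}$. This displays a primary decomposition of $J$ with all components of height $s$, hence of the same height, and all distinct.

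It then follows from the additivity of the degree (Proposition~\ref{additivity-of-the-degree}) that
$$
\deg(S/J)=\sum_{P_i\in V_X(F)}\deg(S/I_{P_i})=\sum_{P_i\in V_X(F)} 1 = |V_X(F)|,
$$
which is exactly the first case of the formula. I should be a little careful about why the intersection $\bigcap_{P_i\in V_X(F)} I_{P_i}$ is an \emph{irredundant} primary decomposition: the maximal ideals $I_{P_i}$ are pairwise comaximal and distinct, so none contains another, and dropping any one strictly enlarges the intersection; this is the only place requiring a short remark. The main obstacle — if one can call it that — is organizing the equivalences correctly: one must invoke Lemma~\ref{vila-delio-feb27-20} to pass between the colon-ideal hypothesis and nonemptiness of $V_X(F)$, and Proposition~\ref{feb28-20} to know $(I(X),F)$ is radical so that its primary decomposition is literally the intersection of the relevant maximal ideals. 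Once those two inputs are in place, the degree computation is immediate from additivity.
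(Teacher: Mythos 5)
Your proof is correct and follows essentially the same route as the paper's: both reduce to the primary decomposition $I(X)=\bigcap_i I_{P_i}$, use Lemma~\ref{vila-delio-feb27-20} to relate the colon-ideal condition to (non)emptiness of $V_X(F)$ and properness of $(I(X),F)$, invoke Proposition~\ref{feb28-20} to see that $(I(X),F)$ is radical and hence equals $\bigcap_{P_i\in V_X(F)}I_{P_i}$, and finish by additivity of the degree. The only cosmetic difference is that the paper separates out the subcase $(I(X)\colon(F))=S$, whereas your argument treats it uniformly.
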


\begin{proof} 
Let $P_1,\ldots,P_m$ be the points of $X$, $m=|X|$, and let $P=P_k$ be a point in
$X$, with $P=(p_1,\ldots,p_s)$. Then, the vanishing ideal $I_P$
of $P$ is a maximal ideal of $S$ of height $s$, 
\begin{equation*}
I_{P}=(t_1-p_1,\ldots,t_s-p_s),\ \deg(S/I_P)=1,
\end{equation*}
and $I(X)=\bigcap_{P\in X}I_P$ is an irredundant primary
decomposition of $I(X)$ (Lemma~\ref{primdec-ixx}). In particular 
the ideal $I(X)$ is an unmixed radical ideal of dimension $0$. 

Assume that $(I(X)\colon (F))\neq I(X)$. If $(I(X)\colon (F))=S$,
then $(F)\subset I(X)$, $X=V_X(F)$, and  
$$
\deg(S/(I(X),F))=\deg(S/I(X))=\sum_{i=1}^m\deg(S/I_{P_i})=|X|=|V_X(F)|.
$$
\quad Hence, we may assume the strict inclusions 
$I(X)\subsetneq(I(X)\colon(F))\subsetneq{S}$. By
Lemma~\ref{vila-delio-feb27-20}, one has $(I(X),F)\subsetneq S$, and by Proposition~\ref{feb28-20},
$(I(X),F)$ is a radical ideal. Any prime ideal $\mathfrak{p}$
containing $(I(X),F)$ is equal to $I_{P_i}$ for some $i$. Therefore,
we may assume that
$$
(I(X),F)=I_{P_1}\textstyle \bigcap\cdots\bigcap I_{P_n},
$$
for some $n\geq 1$, $(F)\subset I_{P_i}$ for $i\leq n$, and $(F)\not\subset
I_{P_i}$ for $i>n$. As a consequence, noticing that $P_i\in V_X(F)$
if and only if $(F)\subset I_{P_i}$ and by additivity of the degree of 
Proposition~\ref{additivity-of-the-degree}, we get
$$ 
|V_X(F)|=\sum_{P_i\in
V_X(F)}\deg(S/I_{P_i})=\sum_{i=1}^n\deg(S/I_{P_i})=\deg(S/(I(X),F)).
$$
\quad Now assume $(I(X)\colon(F))=I(X)$. Then, by
Lemma~\ref{vila-delio-feb27-20}, $V_X(F)=\emptyset$ and
$|V_X(F)|=0$.
\end{proof}

Let $I\subset S$ be an ideal, let $\prec$ be a monomial order, and let
$\Delta_\prec(I)$ be the set of standard monomials of $S/I$. 
The image of $\Delta_\prec(I)$, under the canonical 
map $S\mapsto S/I$, $x\mapsto \overline{x}$, is a basis of $S/I$ as a
$K$-vector space. This is a classical result of Macaulay \cite[Chapter~5]{CLO}. In
particular, $H_I^a(d)$ is the number of standard
monomials of $S/I$ of degree at most $d$. 

\begin{definition}\label{GB-def} 
Let $I$ be an ideal of $S$ and let $\prec$ be a monomial order. 
A subset $\mathcal{G}=\{g_1,\ldots, g_n\}$ of $I$ is called a 
{\it Gr\"obner basis\/} of $I$ if ${\rm
in}_\prec(I)=({\rm in}_\prec(g_1),\ldots,{\rm in}_\prec(g_n))$. 
\end{definition}

\begin{lemma}{\cite[p.~2]{carvalho}}\label{nov6-15} Let $I\subset S$
be an ideal generated by 
$\mathcal{G}=\{g_1,\ldots,g_n\}$, then
$$
\Delta_\prec(I)\subset\Delta_\prec({\rm
in}_\prec(g_1),\ldots,{\rm in}_\prec(g_n)),
$$
with equality if $\mathcal{G}$ is a Gr\"obner basis of $I$.
\end{lemma}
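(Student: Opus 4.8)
The plan is to prove the inclusion $\Delta_\prec(I)\subset\Delta_\prec({\rm in}_\prec(g_1),\ldots,{\rm in}_\prec(g_n))$ by a contrapositive argument on individual monomials, and then to treat the equality case separately using the defining property of a Gr\"obner basis together with Macaulay's basis theorem recalled just before the statement.

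First I would prove the inclusion. Let $J:=({\rm in}_\prec(g_1),\ldots,{\rm in}_\prec(g_n))$ and let $t^a$ be a monomial with $t^a\notin\Delta_\prec(J)$; I must show $t^a\notin\Delta_\prec(I)$, i.e.\ $t^a\in{\rm in}_\prec(I)$. Since $t^a\notin\Delta_\prec(J)$, by definition $t^a\in{\rm in}_\prec(J)$; but $J$ is a monomial ideal generated by ${\rm in}_\prec(g_1),\ldots,{\rm in}_\prec(g_n)$, so $t^a$ is divisible by ${\rm in}_\prec(g_i)$ for some $i$, say $t^a=t^b\,{\rm in}_\prec(g_i)$. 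Because $\mathcal{G}$ generates $I$, we have $g_i\in I$, hence $t^b g_i\in I$, and ${\rm in}_\prec(t^b g_i)=t^b\,{\rm in}_\prec(g_i)=t^a$ (the initial monomial is multiplicative). Therefore $t^a\in{\rm in}_\prec(I)$, which is what was needed. Since $\Delta_\prec(I)$ and $\Delta_\prec(J)$ are sets of monomials, this monomial-wise argument gives the desired containment of sets.

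For the equality, assume $\mathcal{G}$ is a Gr\"obner basis, so by Definition~\ref{GB-def} we have ${\rm in}_\prec(I)=J$. Then a monomial $t^a$ lies in $\Delta_\prec(I)$ iff $t^a\notin{\rm in}_\prec(I)=J$ iff $t^a\in\Delta_\prec(J)$, giving $\Delta_\prec(I)=\Delta_\prec(J)$ immediately from the definitions. (Alternatively, one could invoke Macaulay's theorem recalled above: both $\Delta_\prec(I)$ and $\Delta_\prec({\rm in}_\prec(I))$ map to bases of $S/I$ and $S/{\rm in}_\prec(I)$, which have the same Hilbert function, forcing equality once one direction is known — but the direct argument via Definition~\ref{GB-def} is cleaner.)

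There is essentially no serious obstacle here; the only point requiring a moment's care is the multiplicativity of the initial monomial, ${\rm in}_\prec(t^b f)=t^b\,{\rm in}_\prec(f)$, which is standard for any monomial order and is what makes the divisibility in the monomial ideal $J$ lift to membership in ${\rm in}_\prec(I)$. One should also note explicitly that the containment can be strict when $\mathcal{G}$ is only a generating set and not a Gr\"obner basis, so the hypothesis in the equality clause is genuinely needed; this is implicit in the statement and need not be belabored in the proof.
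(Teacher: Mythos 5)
Your proof is correct and follows essentially the same route as the paper's: the inclusion is obtained by showing that a monomial divisible by some ${\rm in}_\prec(g_i)$ lifts, via multiplicativity of initial monomials, to an element ${\rm in}_\prec(t^b g_i)$ of ${\rm in}_\prec(I)$ (the paper phrases this as a contradiction rather than a contrapositive, which is immaterial), and the equality case is read off directly from ${\rm in}_\prec(I)=({\rm in}_\prec(g_1),\ldots,{\rm in}_\prec(g_n))$. No gaps.
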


\begin{proof} Take $t^a$ in $\Delta_\prec(I)$. We set $L:=({\rm
in}_\prec(g_1),\ldots,{\rm in}_\prec(g_n))$. Note 
that $\Delta_\prec(L)$ is the set of all monomials that are not
in $L$. If $t^a\notin\Delta_\prec(L)$, then $t^a\in L$, that is, we
can write $t^a=t^c\,{\rm
in}_\prec(g_i)$ for some $i$ 
and some $t^c$. Then, $t^a={\rm
in}_\prec(t^c g_i)$, with $t^c g_i$ in $I$, a contradiction. Thus,
$t^a\in\Delta_\prec(L)$. Assume that $\mathcal{G}$ is a
Gr\"obner basis of $I$, that is, ${\rm in}_\prec(I)=L$. Then,
$\Delta_\prec(I)=\Delta_\prec(L)$.
\end{proof}

\begin{theorem}{\rm(cf.
\cite[Theorem~8.32]{Becker-Weispfenning})}\label{affine-zeros-formula}
Let  $X$ be a finite subset of an affine space $\mathbb{A}^s$ over a
field $K$. 
If $F=\{f_1,\ldots,f_r\}$ is a 
set of polynomials of $S$, then 
$$
|V_X(F)|=\deg(S/(I(X),F))=\dim_K(S/(I(X),F))=|\Delta_\prec(I(X),F)|.
$$
\end{theorem}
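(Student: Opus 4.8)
The plan is to reduce the whole chain of equalities to the already-established Lemmas~\ref{vila-delio-feb27-20} and \ref{degree-formula-zeros-affine} together with Macaulay's basis theorem for standard monomials, after splitting into the two cases governed by whether the ideal $(I(X),F)$ is proper.

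First I would dispose of the degenerate case. If $(I(X),F)=S$, then $S/(I(X),F)=0$, so its $K$-dimension is $0$, its degree is $0$ by convention, and $\Delta_\prec(I(X),F)=\emptyset$; moreover, by Lemma~\ref{vila-delio-feb27-20}, $V_X(F)=\emptyset$, so $|V_X(F)|=0$ as well. Hence all four quantities coincide (trivially) here.

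Now assume $J:=(I(X),F)\subsetneq S$. The key structural observation is that $I(X)$ is an ideal of Krull dimension $0$ (Lemma~\ref{primdec-ixx}), and since $I(X)\subseteq J\subsetneq S$, the proper ideal $J$ also has dimension $0$; in particular $S/J$ is a finite-dimensional $K$-vector space, equal to the union of the increasing chain of subspaces $S_{\leq d}/J_{\leq d}$, so $\dim_K(S/J)=\lim_{d\to\infty}H_J^a(d)$. Because $\dim(S/J)=0$, this limit is the eventual constant value of the affine Hilbert polynomial, which by definition is $\deg(S/J)$; this gives $\deg(S/J)=\dim_K(S/J)$, exactly the $k=0$ case recalled in Section~\ref{degree-variety-section}. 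Invoking Macaulay's theorem, the image of $\Delta_\prec(J)$ in $S/J$ is a $K$-basis, and since $S/J$ is finite-dimensional this basis, hence $\Delta_\prec(J)$, is finite with $\dim_K(S/J)=|\Delta_\prec(J)|$. This settles the last three members of the chain. Finally, to bring in $|V_X(F)|$, note that $(I(X),F)\neq S$ forces $(I(X)\colon(F))\neq I(X)$ by Lemma~\ref{vila-delio-feb27-20}, so Lemma~\ref{degree-formula-zeros-affine} yields $|V_X(F)|=\deg(S/(I(X),F))$, completing the identity.

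I do not expect a real obstacle: all the substantive input, namely radicality of proper ideals containing $I(X)$, additivity of the degree (Proposition~\ref{additivity-of-the-degree}), and the point-counting identity, is already packaged inside Lemmas~\ref{degree-formula-zeros-affine} and \ref{vila-delio-feb27-20}. The only point that needs a little care is the identification $\deg(S/J)=\dim_K(S/J)$ for a zero-dimensional proper ideal $J$, i.e. checking that $S/J$ is genuinely finite-dimensional and that this dimension is read off correctly from the eventually constant affine Hilbert function; this is routine and is precisely the $k=0$ remark already in the text.
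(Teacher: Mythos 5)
Your proposal is correct and follows essentially the same route as the paper: both arguments split into the degenerate and proper cases (your split on $(I(X),F)\subsetneq S$ versus the paper's split on $(I(X)\colon(F))\neq I(X)$ being equivalent by Lemma~\ref{vila-delio-feb27-20}), and in the proper case both deduce the middle equality from the $k=0$ reading of the degree, the last from Macaulay's theorem, and the first from Lemma~\ref{degree-formula-zeros-affine}. Your write-up merely spells out in more detail the identification $\deg(S/J)=\dim_K(S/J)$ via the stabilizing affine Hilbert function, which the paper takes as immediate from the definition.
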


\begin{proof} First assume that $(I(X)\colon(F))\neq I(X)$. Then, by
Lemma~\ref{vila-delio-feb27-20}, $(I(X),F)\subsetneq S$. The first
equality follows from 
Lemma~\ref{degree-formula-zeros-affine}, the second
equality follows from the definition of the degree since $S/(I(X),F)$
has Krull dimension $0$, and the third equality is Macaulay's theorem 
that $\Delta_\prec(I(X),F)$ is a basis for $S/(I(X),F)$ as a $K$-vector
space \cite[Chapter~5]{CLO}. 

Now assume that $(I(X)\colon(F))=I(X)$. Then, by
Lemma~\ref{vila-delio-feb27-20},  $(I(X),F)=S$ and $V_X(F)=\emptyset$.
Thus, $\deg(S/(I(X),F))=0$, $\Delta_\prec(I(X),F)=\emptyset$, and all
numbers in the equality above are $0$. 
\end{proof}

For vanishing ideals, the next result is the affine analog of 
\cite[Lemma~4.1]{rth-footprint}.

\begin{theorem}\label{degree-initial-footprint-affine} Let $X$ be a finite
subset of $\mathbb{A}^s$, let $I=I(X)$ be the vanishing ideal of $X$, and let $\prec$ be 
a monomial order. If $F$ is a finite set of polynomials of $S$ and
$(I\colon (F))\neq I$, then 
$$
|V_X(F)|=\deg(S/(I,F))\leq\deg(S/({\rm
in}_\prec(I),{\rm in}_\prec(F)))\leq\deg(S/I)=|X|,
$$
and $\deg(S/(I,F))<\deg(S/I)$ if $(F)\not\subset I$.
\end{theorem}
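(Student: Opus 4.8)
The plan is to prove the chain of (in)equalities link by link, drawing on the results already established in this section. Since $(I\colon(F))\neq I$, Lemma~\ref{vila-delio-feb27-20} gives $(I,F)\subsetneq S$, and then the leftmost equality $|V_X(F)|=\deg(S/(I,F))$, together with $\deg(S/(I,F))=\dim_K(S/(I,F))=|\Delta_\prec(I,F)|$, is exactly the content of Theorem~\ref{affine-zeros-formula}. The rightmost equality $\deg(S/I)=|X|$ is Proposition~\ref{behavior-hilbert-function}, which also records that $\dim_K(S/I)=\deg(S/I)=|X|<\infty$.

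For the two middle inequalities I would pass to standard monomials. Put $J=(I,F)$ and $L=({\rm in}_\prec(I),{\rm in}_\prec(F))$. Because $I\subseteq J$ and each polynomial of $F$ lies in $J$, one has $L\subseteq{\rm in}_\prec(J)$, hence every monomial outside ${\rm in}_\prec(J)$ is outside $L$, that is $\Delta_\prec(J)\subseteq\Delta_\prec(L)$ (here $\Delta_\prec(L)$ is the set of monomials not in the monomial ideal $L$, in the sense of Lemma~\ref{nov6-15}); likewise ${\rm in}_\prec(I)\subseteq L$ yields $\Delta_\prec(L)\subseteq\Delta_\prec({\rm in}_\prec(I))=\Delta_\prec(I)$. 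By Macaulay's theorem $\Delta_\prec(I)$ is a $K$-basis of $S/I$, so it is finite; therefore $S/{\rm in}_\prec(I)$, and a fortiori $S/L$ and $S/J$, are zero-dimensional, and for each of these ideals the degree equals the $K$-dimension of the quotient, which in turn equals the number of its standard monomials. Combining these facts gives $\deg(S/(I,F))=|\Delta_\prec(J)|\leq|\Delta_\prec(L)|=\deg(S/L)\leq|\Delta_\prec(I)|=\deg(S/I)=|X|$, which is the asserted chain.

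For the last assertion, assume $(F)\not\subset I$. Since $|V_X(F)|=\deg(S/(I,F))$ and $\deg(S/I)=|X|$, it is enough to show $V_X(F)\subsetneq X$. We always have $V_X(F)\subseteq X$; if equality held, then every $f\in F$ would vanish at all points of $X$, so $F\subseteq I(X)=I$ and hence $(F)\subseteq I$, contradicting the hypothesis. Thus $|V_X(F)|<|X|$, and therefore $\deg(S/(I,F))<\deg(S/I)$.

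The step needing genuine care, rather than routine bookkeeping, is the middle one: one must confirm that all three quotient rings $S/J$, $S/L$, $S/{\rm in}_\prec(I)$ are zero-dimensional before identifying degree with vector-space dimension with the count of standard monomials, and one must check that the inclusions of initial ideals force the claimed inclusions of standard-monomial sets — this is precisely the mechanism of Lemma~\ref{nov6-15}. The remaining links are immediate consequences of Lemma~\ref{vila-delio-feb27-20}, Theorem~\ref{affine-zeros-formula}, and Proposition~\ref{behavior-hilbert-function}.
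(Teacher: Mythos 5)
Your proof is correct. For the main chain of (in)equalities you follow essentially the paper's route: both arguments reduce the three degrees to counts of standard monomials and compare them via the inclusions $\Delta_\prec(J)\subseteq\Delta_\prec(L)\subseteq\Delta_\prec(I)$, where $J=(I,F)$ and $L=({\rm in}_\prec(I),{\rm in}_\prec(F))$. Your derivation of the first inclusion is in fact slightly more streamlined than the paper's: you observe directly that $L\subseteq{\rm in}_\prec(J)$ because $I\subseteq J$ and $F\subset J$, whereas the paper picks a Gr\"obner basis $\mathcal{G}$ of $I$, notes that $J$ is generated by $\mathcal{G}\cup F$, and invokes Lemma~\ref{nov6-15}; the two mechanisms are the same in substance. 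You also correctly dispose of the zero-dimensionality issue before identifying degree with $K$-dimension with the number of standard monomials.

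Where you genuinely diverge from the paper is the final strict inequality. The paper proves $\deg(S/J)<\deg(S/I)$ algebraically: it uses Proposition~\ref{feb28-20} (Seidenberg) to conclude that $J$ is radical, deduces ${\rm Ass}(J)\subsetneq{\rm Ass}(I)$ from $(F)\not\subset I$, and applies additivity of the degree (Proposition~\ref{additivity-of-the-degree}). You instead argue geometrically: since $|V_X(F)|=\deg(S/J)$ is already established and $V_X(F)=X$ would force $(F)\subseteq I(X)=I$, the hypothesis $(F)\not\subset I$ gives $V_X(F)\subsetneq X$ and hence $|V_X(F)|<|X|=\deg(S/I)$. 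Your route is more elementary and avoids radicality and primary decompositions entirely; the paper's route has the advantage of being the pattern that generalizes to graded/projective settings (cf.\ \cite[Lemma~4.1]{rth-footprint}) where one cannot simply count points of an affine variety. Both are valid here.
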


\begin{proof} The equality on the left follows from
Lemma~\ref{degree-formula-zeros-affine} and the 
equality on the right follows from Lemma~\ref{primdec-ixx} and the
additivity of the degree of 
Proposition~\ref{additivity-of-the-degree}. We set $J=(I,F)$ and
$L=({\rm in}_\prec(I),{\rm 
in}_\prec(F))$, where ${\rm in}_\prec(F)=\{{\rm in}_\prec(f)\vert\, f\in F\}$. As 
$I\subsetneq(I\colon(F))$, we can pick $a$ in $(I\colon(F))$ and $a$
not in $I$. Then, $f \overline{a}=\overline{0}$ for all $f\in(F)$.
Thus, all elements of $(F)$ are zero divisors of 
$S/I$. Hence, as $I$ is a finite intersection of maximal ideals of
$S$, by Theorem~\ref{zero-divisors} and \cite[2.1.49,
p.~74]{monalg-rev}, there is an
associated prime ideal $\mathfrak{p}$ of $S/I$ such that
$(F)\subset\mathfrak{p}$. Thus, $J\subset\mathfrak{p}\subsetneq S$. The rings
$S/J$, $S/L$, and $S/I$ have Krull dimension $0$ since
$\dim(S/I)=\dim(S/{\rm in}_\prec(I))=0$. 
Pick a Gr\"obner basis $\mathcal{G}=\{g_1,\ldots,g_n\}$ of $I$. Then, 
$J$ is generated by $\mathcal{G}\cup F$ and by Lemma~\ref{nov6-15}
one has the inclusions
\begin{eqnarray*}
& &\Delta_\prec(J)=\Delta_\prec(I,F)\subset\Delta_\prec({\rm in}_\prec(g_1),\ldots,{\rm
in}_\prec(g_n),{\rm in}_\prec(F))=\\
& &\ \ \ \ \  \Delta_\prec({\rm in}_\prec(I),{\rm
in}_\prec(F))=\Delta_\prec(L)\subset \Delta_\prec({\rm in}_\prec(g_1),\ldots,{\rm
in}_\prec(g_n))=\Delta_\prec(I).
\end{eqnarray*}
\quad Thus, $\Delta_\prec(J)\subset \Delta_\prec(L)\subset \Delta_\prec(I)$.
Recall that $H_I^a(d)$, the affine Hilbert function of $I$ at $d$, is the number of standard
monomials of degree at most $d$. Hence, $H_J^a(d)\leq H_L^a(d)\leq
H_I^a(d)$ for
$d\geq 0$. Then, by Hilbert theorem \cite[p.~58]{Sta1}, $H_J^a$,
$H_L^a$, $H_I^a$ are polynomial functions of degree 
equal to $\dim(S/I)=0$, and so they become constant for $d\gg 0$. Thus,   
$$
H_J^a(d)=\deg(S/J)\leq H_L^a(d)=\deg(S/L)\leq
H_I^a(d)=\deg(S/I)
$$
for $d\gg 0$, that is, $\deg(S/J)\leq\deg(S/L)\leq\deg(S/I)$. 
Now, assume that $(F)\not\subset I$. As $I$ is a radical ideal, there is at
least one minimal prime of $I$ that does not contain $(F)$.  By
Proposition~\ref{feb28-20}, 
$J$ is a radical ideal. Hence, ${\rm Ass}(J)\subsetneq{\rm Ass}(I)$,
and using the additivity of the degree of 
Proposition~\ref{additivity-of-the-degree}, we get
$$
\deg(S/J)=\sum_{{\mathfrak{p}\in\rm Ass}(J)}\deg(S/\mathfrak{p})
<\sum_{{\mathfrak{p}\in\rm Ass}(I)}\deg(S/\mathfrak{p})=\deg(S/I),
$$
and consequently $\deg(S/J)<\deg(S/I)$.
\end{proof}

\section{Generalized Hamming weights of evaluation
codes}\label{GHW-section}
In this section we give formulas, in terms of the degree and a
graded monomial order, for the generalized Hamming weights of standard
evaluation codes, and show degree-footprint lower bounds for these
weights which are much easier to compute. To avoid
repetitions, we continue to employ 
the notations and definitions used in Sections~\ref{intro-section} 
and \ref{degree-variety-section}.  Throughout this section
we assume that $K$ is a finite field $\mathbb{F}_q$. 

\begin{lemma}\label{apr26-20} 
Let $\mathcal{L}_X$ be a standard evaluation code on $X$
relative to a monomial order $\prec$. Then, $\mathcal{L}\cap I(X)=(0)$ 
and $\mathcal{L}\simeq\mathcal{L}_X$.
\end{lemma}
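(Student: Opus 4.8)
The plan is to unwind the definitions. Recall that $\mathcal{L}_X$ is by definition the image $\mathrm{ev}(\mathcal{L})$, where $\mathrm{ev}\colon S\to K^m$ is the evaluation map at the points of $X$, and that the kernel of $\mathrm{ev}$ is exactly $I(X)$. Restricting $\mathrm{ev}$ to the linear subspace $\mathcal{L}$ gives a surjection $\mathcal{L}\twoheadrightarrow \mathcal{L}_X$ whose kernel is $\mathcal{L}\cap\ker(\mathrm{ev}) = \mathcal{L}\cap I(X)$. So by the first isomorphism theorem for vector spaces, $\mathcal{L}/(\mathcal{L}\cap I(X))\simeq \mathcal{L}_X$. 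Thus the whole statement reduces to proving the single claim $\mathcal{L}\cap I(X)=(0)$; once that is in hand, the restriction of $\mathrm{ev}$ to $\mathcal{L}$ is injective and hence an isomorphism onto $\mathcal{L}_X$.

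For the claim $\mathcal{L}\cap I(X)=(0)$, I would argue as follows. Suppose $f\in \mathcal{L}\cap I(X)$ and $f\neq 0$. Since $\mathcal{L}_X$ is a standard evaluation code relative to $\prec$, by definition $\mathcal{L}$ is a linear subspace of $K\Delta_\prec(I)$, the $K$-span of the standard monomials of $S/I$ (here $I=I(X)$). Hence every nonzero element of $\mathcal{L}$, in particular $f$, is a nonzero $K$-linear combination of standard monomials, so its initial monomial $\mathrm{in}_\prec(f)$ is one of the monomials appearing in this combination, and therefore $\mathrm{in}_\prec(f)\in\Delta_\prec(I)$, i.e. $\mathrm{in}_\prec(f)\notin\mathrm{in}_\prec(I)$. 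On the other hand, $f\in I$ and $f\neq 0$ force $\mathrm{in}_\prec(f)\in\mathrm{in}_\prec(I)$ by the very definition of the initial ideal. This is a contradiction, so no such $f$ exists and $\mathcal{L}\cap I(X)=(0)$.

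The only mild subtlety — and the one place where a reader might want a word of justification — is the assertion that $f\in K\Delta_\prec(I)\setminus\{0\}$ implies $\mathrm{in}_\prec(f)\notin\mathrm{in}_\prec(I)$; that is, that the leading monomial of a nonzero $K$-linear combination of standard monomials is again a standard monomial. This is immediate: writing $f=\sum_i c_i\, t^{a_i}$ with $c_i\in K^*$ and the $t^{a_i}$ distinct standard monomials, $\mathrm{in}_\prec(f)$ is simply the $\prec$-largest monomial among the $t^{a_i}$, which lies in $\Delta_\prec(I)$ by construction. No genuine obstacle arises; the proof is essentially a two-line combination of the first isomorphism theorem with the defining property of standard monomials. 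I would write it in exactly that order: first establish $\mathcal{L}\cap I(X)=(0)$ via the initial-monomial argument, then conclude $\mathcal{L}\simeq\mathcal{L}_X$ from the first isomorphism theorem applied to $\mathrm{ev}|_{\mathcal{L}}$.
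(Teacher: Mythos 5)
Your proposal is correct and follows essentially the same argument as the paper: a nonzero $f\in\mathcal{L}\cap I(X)$ would have $\mathrm{in}_\prec(f)\in\mathrm{in}_\prec(I)$ even though all monomials of $f$ are standard, a contradiction, and then the evaluation map restricted to $\mathcal{L}$ is an isomorphism onto $\mathcal{L}_X$. Your added remark justifying why the leading monomial of a standard polynomial is itself a standard monomial is a welcome, if minor, elaboration of the paper's one-line proof.
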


\begin{proof} We set $I=I(X)$. Take $f\in\mathcal{L}\cap I$. If $f\neq 0$, 
then ${\rm in}_\prec(f)\in{\rm in}_\prec(I)$, a contradiction since
all monomials of $f$ are standard monomial of $S/I$. Hence, the
evaluation maps gives an isomorphism between $\mathcal{L}$ and
$\mathcal{L}_X$. 
\end{proof}

Let $I$ be an ideal of $S$, let $\prec$ be a monomial order on $S$, let
$\mathcal{L}$ be a linear subspace of $K\Delta_\prec(I)$, let
$\mathcal{L}_r$ be the set of all linearly independent subsets 
$F=\{f_1,\ldots,f_r\}$ of $\mathcal{L}$ with $r$ elements, 
let $\mathcal{L}_{\prec,r}$ be
the set of all subsets $F=\{f_1,\ldots,f_r\}$ of
$\mathcal{L}^*=\mathcal{L}\setminus\{0\}$ such 
that ${\rm in}_\prec(f_1),\ldots,{\rm in}_\prec(f_r)$ are distinct
monomials and $f_i$ is monic for $i=1,\ldots,r$. The
set of all standard polynomials of $S/I$ is denoted by
$\Delta_\prec^p(I)$, that is,
$\Delta_\prec^p(I)$ is equal to $(K\Delta_\prec(I))\setminus\{0\}$.

\begin{lemma}\label{distinct-implies-li}
Let $\mathcal{L}$ be a $K$-linear subspace of $S$ spanned by a finite subset of
$\Delta_\prec^p(I)$, and let
$F=\{f_1,\ldots,f_r\}$ be a subset of
$\mathcal{L}^*=\mathcal{L}\setminus\{0\}$. The following hold. 
\begin{enumerate}
\item[\rm(a)] If $f_1,\ldots,f_r$ are linearly independent over
$K$, then there is a set $G=\{g_1,\ldots,g_r\}\subset\mathcal{L}^*$
such that $KF=KG$, ${\rm in}_\prec(g_1),\ldots,{\rm 
in}_\prec(g_r)$ distinct, and ${\rm
in}_\prec(f_i)\succeq{\rm in}_\prec(g_i)$ for all $i$.
\item[\rm(b)] If ${\rm in}_\prec(f_1),\ldots,{\rm in}_\prec(f_r)$
are distinct, then $f_1,\ldots,f_r$ are
linearly independent over $K$.
\item[\rm(c)] $\mathcal{L}_{\prec,r}\subset\mathcal{L}_{r}$, and if
$F$ is in $\mathcal{L}_r$, then there is   
$G$ in $\mathcal{L}_{\prec,r}$ such that $KF=KG$.
\end{enumerate}
\end{lemma}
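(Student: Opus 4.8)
The plan is to treat the three parts in the order (b), (a), (c), since (b) is the cleanest and its contrapositive feeds directly into (a). For part (b), suppose $\sum_{i=1}^r c_i f_i = 0$ with not all $c_i$ zero, and let $j$ be the index for which ${\rm in}_\prec(f_j)$ is $\prec$-largest among those $f_i$ with $c_i\neq 0$. The monomial ${\rm in}_\prec(f_j)$ occurs in $c_jf_j$ with nonzero coefficient, and by the choice of $j$ it strictly dominates every monomial appearing in any other $c_if_i$ with $c_i\neq 0$; hence it cannot cancel, contradicting $\sum c_if_i=0$. This is the standard ``distinct leading terms $\Rightarrow$ linear independence'' argument and presents no obstacle.

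For part (a), I would argue by induction on the number of pairs $(i,i')$ with $i<i'$ and ${\rm in}_\prec(f_i)={\rm in}_\prec(f_{i'})$, i.e.\ by a ``reduction'' procedure analogous to the first steps of Buchberger-style reduction. If all the ${\rm in}_\prec(f_i)$ are already distinct, take $G=F$. Otherwise pick $i<i'$ with ${\rm in}_\prec(f_i)={\rm in}_\prec(f_{i'})=:t^a$; since $f_i,f_{i'}$ are monic, $f_{i'}-f_i$ lies in $\mathcal{L}$, its support consists of standard monomials, and ${\rm in}_\prec(f_{i'}-f_i)\prec t^a$ (the $t^a$ terms cancel). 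Note $f_{i'}-f_i\neq 0$: otherwise $f_i=f_{i'}$ and $F$ would not be linearly independent. Replace $f_{i'}$ by (a monic scalar multiple of) $f_{i'}-f_i$; this changes neither the span $KF$ nor the other $f_k$, strictly decreases ${\rm in}_\prec$ in the $i'$ slot (so the relation ${\rm in}_\prec(f_k)\succeq {\rm in}_\prec(g_k)$ is preserved and even improved for $k=i'$), and the process terminates because $\prec$ is a well-order on monomials and there are only finitely many standard polynomials over the finite field $K$. At termination all leading monomials are distinct, giving the desired $G$; tracking the inequalities through the induction gives ${\rm in}_\prec(f_i)\succeq{\rm in}_\prec(g_i)$ for every $i$. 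The one point requiring a little care is bookkeeping the ``$\succeq$ for all $i$'' claim through repeated substitutions — this is where I expect the only real friction, though it is routine.

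For part (c), the inclusion $\mathcal{L}_{\prec,r}\subset\mathcal{L}_r$ is immediate: any $F\in\mathcal{L}_{\prec,r}$ has distinct leading monomials, hence is linearly independent by part~(b) and has $r$ elements, so $F\in\mathcal{L}_r$. For the second assertion, given $F\in\mathcal{L}_r$ apply part~(a) to obtain $G=\{g_1,\ldots,g_r\}\subset\mathcal{L}^*$ with $KF=KG$ and ${\rm in}_\prec(g_1),\ldots,{\rm in}_\prec(g_r)$ distinct; replacing each $g_i$ by its unique monic scalar multiple (which lies in $\mathcal{L}$ and does not alter the span or the leading monomials) we may assume every $g_i$ is monic, so $G\in\mathcal{L}_{\prec,r}$ with $KF=KG$, as required.
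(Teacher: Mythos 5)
Your proof is correct and rests on essentially the same mechanism as the paper's: cancel coinciding leading monomials by subtraction, the only real difference being that the paper organizes this as an induction on $r$ (peeling off $f_1$ and subtracting it from every $f_i$ with the same leading monomial before recursing), while you run a one-collision-at-a-time reduction loop and terminate by well-foundedness of $\prec$ on the leading monomials — both are sound, and your parts (b) and (c) match the paper's, with (c) even being slightly more careful about rescaling to monic. The one slip is in (a): the hypothesis there does not say the $f_i$ are monic, so ``since $f_i,f_{i'}$ are monic'' is unjustified as written; either normalize all the $f_i$ to be monic at the outset (this changes neither the span, nor linear independence, nor the leading monomials, so it is a harmless reduction) or subtract the correct scalar multiple $f_{i'}-(c_{i'}/c_i)f_i$ — note that the paper's own Case ($\mathrm{a}_2$), which sets $h_i=f_1-f_i$, silently performs the same normalization.
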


\begin{proof} 
(a): Note that all elements of $\mathcal{L}^*$ are standard
polynomials of $S/I$. We proceed by induction on $r$. The case $r=1$ is clear. Assume that $r>1$.
Permuting the $f_i$'s if necessary we may assume that ${\rm
in}_\prec(f_1)\succeq\cdots\succeq{\rm
in}_\prec(f_r)$. 

Case ($\mathrm{a}_1$): Assume that ${\rm in}_\prec(f_1)\succ{\rm
in}_\prec(f_2)$. By applying the induction hypothesis to
the set $F'=\{f_2,\ldots,f_r\}$, we obtain a set
$G'=\{g_2,\ldots,g_r\}\subset\mathcal{L}^*$ such that $KF'$ is equal
to $KG'$, the monomials ${\rm in}_\prec(g_2),\ldots,{\rm
in}_\prec(g_r)$ are distinct, and ${\rm
in}_\prec(f_i)\succeq{\rm in}_\prec(g_i)$ for $i=2,\ldots,r$. Setting
$g_1=f_1$ and $G=G'\cup\{g_1\}$, we get $KF=KG$, and the monomial
${\rm in}_\prec(g_1)$ is distinct from ${\rm
in}_\prec(g_2),\ldots,{\rm in}_\prec(g_r)$ because ${\rm
in}_\prec(f_1)\succ {\rm in}_\prec(f_i)\succeq {\rm in}_\prec(g_i)$
for $i=2,\ldots,r$.

Case ($\mathrm{a}_2$): Assume there is $k\geq 2$ such
that ${\rm in}_\prec(f_1)={\rm in}_\prec(f_i)$ for $i\leq k$ and 
${\rm in}_\prec(f_1)\succ {\rm in}_\prec(f_i)$ for $i>k$. We set
$h_i=f_1-f_i$ for $i=2,\ldots,k$ and $h_i=f_i$ for $i=k+1,\ldots,r$.
Note that ${\rm in}_\prec(f_1)\succ {\rm in}_\prec(h_i)$ for $i\geq
2$, $h_2,\ldots,h_r$ are in $\mathcal{L}^*$, and
$h_2,\ldots,h_r$ are linearly independent over
$K$. By applying the induction hypothesis to
the set $H=\{h_2,\ldots,h_r\}$, we obtain a set
$G'=\{g_2,\ldots,g_r\}\subset\mathcal{L}^*$ such
that $KH=KG'$, ${\rm in}_\prec(g_2),\ldots,{\rm
in}_\prec(g_r)$ distinct, and ${\rm
in}_\prec(h_i)\succeq{\rm in}_\prec(g_i)$ for $i=2,\ldots,r$. Setting
$g_1=f_1$ and $G=G'\cup\{g_1\}$, we get $KF=KG$, and the monomial
${\rm in}_\prec(g_1)$ is distinct from ${\rm
in}_\prec(g_2),\ldots,{\rm in}_\prec(g_r)$ because 
${\rm in}_\prec(f_1)\succ {\rm in}_\prec(h_i)\succeq{\rm
in}_\prec(g_i)$ 
for $i\geq 2$.

(b): By hypothesis, ${\rm in}_\prec(f_1)\succ\cdots\succ{\rm
in}_\prec(f_r)$. 
Assume that
$\sum_{i=1}^r\lambda_if_i=0$, $\lambda_i\in K$
for all $i$. We proceed by
contradiction assuming $\lambda_1=\cdots=\lambda_{k-1}=0$ and
$\lambda_k\neq 0$ for some $k$. Setting $f:=\sum_{i=k}^r\lambda_if_i$,
we get ${\rm in}_\prec(f)={\rm
in}_\prec(f_k)$ and $f\neq 0$, a contradiction.

(c): This follows at once from parts (a) and (b).
\end{proof}

Let $C$ be a $[m,k]$ \textit{linear code} of {\it length} $m$ and
{\it dimension} $k$ over a finite field $K=\mathbb{F}_q$, and let
$1\leq r\leq k$ be an integer. Given a subcode $D$ of $C$, the {\it
support\/} $\chi(D)$ of $D$ is the set    
$$
\chi(D):=\{i\,\vert\, \exists\, (a_1,\ldots,a_m)\in D,\, a_i\neq 0\}.
$$
\quad The {\it support\/} $\chi(\beta)$ of a vector $\beta\in K^m$ 
is $\chi(K\beta)$, that is, $\chi(\beta)$ is the set of all non-zero
entries of $\beta$. The $r$-th {\it generalized Hamming weight\/} of $C$, denoted 
$\delta_r(C)$, is the size of the smallest support of an
$r$-dimensional subcode: 
$$
\delta_r(C):=\min\{|\chi(D)|\,\colon\, D\mbox{ is a subcode of
}C\mbox{ with }\dim_K(D)=r\}. 
$$
\quad The {\it weight hierarchy\/} of $C$ is the sequence
$(\delta_1(C),\ldots,\delta_k(C))$. The integer $\delta_1(C)$ is the minimum
distance of $C$ and is denoted by $\delta(C)$. 
 According to \cite[Theorem~1,
Corollary~1]{wei}   
the weight hierarchy is an
increasing sequence 
$$
1\leq\delta_1(C)<\cdots<\delta_k(C)\leq m,
$$
and $\delta_r(C)\leq m-k+r$ for $r=1,\ldots,k$. For $r=1$ this is the
Singleton bound for the minimum distance. Notice that 
$\delta_r(C)\geq r$. 

\begin{lemma}{\cite[Lemma~2.1]{rth-footprint}}\label{seminar} Let $D$
be a subcode of $C$ of dimension $r\geq 1$. If  
$\beta_1,\ldots,\beta_r$ is a $K$-basis for $D$ with
$\beta_i=(\beta_{i,1},\ldots,\beta_{i,m})$ for $i=1,\ldots,r$, then 
$\chi(D)=\bigcup_{i=1}^r\chi(\beta_i)$ and the number of elements of
$\chi(D)$ is the number of non-zero columns of the matrix:
$$   
\left[\begin{matrix}
\beta_{1,1}&\cdots&\beta_{1,i}&\cdots&\beta_{1,m}\\
\beta_{2,1}&\cdots&\beta_{2,i}&\cdots&\beta_{2,m}\\
\vdots&\cdots&\vdots&\cdots&\vdots\\
\beta_{r,1}&\cdots&\beta_{r,i}&\cdots&\beta_{r,m}
\end{matrix}\right].
$$
\end{lemma}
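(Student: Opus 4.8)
The plan is to prove the two assertions in turn, both by an elementary coordinate-wise argument over $K$.

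First I would establish the set equality $\chi(D)=\bigcup_{i=1}^r\chi(\beta_i)$ by double inclusion. The inclusion $\bigcup_{i=1}^r\chi(\beta_i)\subseteq\chi(D)$ is immediate: each $\beta_i$ belongs to $D$, so by the definition of the support of a subcode, every position at which some $\beta_i$ has a nonzero entry lies in $\chi(D)$. For the reverse inclusion, take $i\in\chi(D)$, so that there is a codeword $\beta=(a_1,\ldots,a_m)\in D$ with $a_i\neq 0$. Since $\beta_1,\ldots,\beta_r$ is a $K$-basis of $D$, write $\beta=\sum_{j=1}^r\lambda_j\beta_j$ with $\lambda_j\in K$; comparing $i$-th coordinates gives $a_i=\sum_{j=1}^r\lambda_j\beta_{j,i}\neq 0$, so $\beta_{j,i}\neq 0$ for at least one index $j$, that is, $i\in\chi(\beta_j)\subseteq\bigcup_{j=1}^r\chi(\beta_j)$. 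This proves the first assertion.

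For the second assertion I would observe that the $i$-th column of the displayed matrix has entries $\beta_{1,i},\ldots,\beta_{r,i}$, so this column is the zero column precisely when $\beta_{j,i}=0$ for every $j=1,\ldots,r$, i.e. precisely when $i\notin\bigcup_{j=1}^r\chi(\beta_j)$. By the first part, this says the $i$-th column is nonzero if and only if $i\in\chi(D)$. Hence the set of indices of nonzero columns is exactly $\chi(D)$, and in particular the number of nonzero columns equals $|\chi(D)|$.

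There is essentially no hard step here: the whole argument is linear algebra over $K$, and the only point requiring a moment's care is the implication ``$i\in\chi(D)\Rightarrow i\in\chi(\beta_j)$ for some $j$'', where one must expand a general codeword of $D$ in the given basis rather than reason about a single vector — but this is immediate once the expansion is written down.
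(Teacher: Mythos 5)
Your proof is correct and complete: the double-inclusion argument for $\chi(D)=\bigcup_{i=1}^r\chi(\beta_i)$ and the identification of nonzero columns with positions in this union are exactly the standard argument, and the paper itself states this lemma with only a citation to \cite[Lemma~2.1]{rth-footprint} rather than reproving it. Nothing is missing.
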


One of the main result of this section is the following degree formula for 
the $r$-th generalized Hamming weight $\delta_r(\mathcal{L}_X)$ of a
standard evaluation code $\mathcal{L}_X$. 

\begin{theorem}\label{GHW-standard-code}
Let $X$ be a subset of $\mathbb{A}^s$, let $I$ be the vanishing
ideal of $X$, let $\mathcal{L}$ be a linear
subspace of $K\Delta_\prec(I)$, and let $\mathcal{L}_X$ be the standard evaluation code
on $X$ relative to $\prec$. Then 
$$
\delta_r(\mathcal{L}_X)=\deg(S/I)-\max\{\deg(S/(I,F))\vert\,
F\in\mathcal{L}_{\prec,r}\}\ \mbox{ for }\ 1\leq r\leq\dim_K(\mathcal{L}_X).
$$
\end{theorem}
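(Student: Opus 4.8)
The plan is to establish the two inequalities $\delta_r(\mathcal{L}_X)\leq\deg(S/I)-\max\{\deg(S/(I,F))\mid F\in\mathcal{L}_{\prec,r}\}$ and the reverse, separately. Throughout write $I=I(X)$ and recall from Lemma~\ref{apr26-20} that the evaluation map restricts to an isomorphism $\mathcal{L}\simeq\mathcal{L}_X$; so an $r$-dimensional subcode $D\subseteq\mathcal{L}_X$ corresponds to an $r$-dimensional subspace $W\subseteq\mathcal{L}$ with $D={\rm ev}(W)$, and by Lemma~\ref{distinct-implies-li}(c) we may choose a basis $F=\{f_1,\ldots,f_r\}$ of $W$ lying in $\mathcal{L}_{\prec,r}$. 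The bridge between the comb/algebraic side and the code side is Lemma~\ref{seminar}: $|\chi(D)|$ equals the number of nonzero columns of the $r\times m$ evaluation matrix whose $j$-th column is $(f_1(P_j),\ldots,f_r(P_j))$. A column is zero precisely when $P_j\in V_X(F)$, so $|\chi(D)|=|X|-|V_X(F)|=\deg(S/I)-|V_X(F)|$, using $\deg(S/I)=|X|$ (Proposition~\ref{behavior-hilbert-function}).

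\textbf{Upper bound.} Fix $F\in\mathcal{L}_{\prec,r}$. By Lemma~\ref{distinct-implies-li}(b) the $f_i$ are linearly independent over $K$, so $W:=KF$ has dimension $r$ and $D:={\rm ev}(W)$ is an $r$-dimensional subcode of $\mathcal{L}_X$. The computation above gives $|\chi(D)|=\deg(S/I)-|V_X(F)|$. Now I would split on whether $(F)\subseteq I$: if $(F)\subseteq I$ then every $f_i\in\mathcal{L}\cap I=(0)$ by Lemma~\ref{apr26-20}, which is impossible since $f_i\neq 0$; hence $(F)\not\subseteq I$ and in particular $(I\colon(F))\neq I$. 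By Theorem~\ref{degree-initial-footprint-affine}, $|V_X(F)|=\deg(S/(I,F))$. Therefore $\delta_r(\mathcal{L}_X)\leq|\chi(D)|=\deg(S/I)-\deg(S/(I,F))$, and taking the minimum over $F$ — equivalently, the maximum of $\deg(S/(I,F))$ — yields the ``$\leq$'' direction.

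\textbf{Lower bound.} Conversely, let $D$ be any $r$-dimensional subcode of $\mathcal{L}_X$ with $|\chi(D)|=\delta_r(\mathcal{L}_X)$. Pull back via the isomorphism to $W\subseteq\mathcal{L}$ of dimension $r$; by Lemma~\ref{distinct-implies-li}(c) pick $F=\{f_1,\ldots,f_r\}\in\mathcal{L}_{\prec,r}$ with $KF=W$. The same matrix computation gives $\delta_r(\mathcal{L}_X)=|\chi(D)|=\deg(S/I)-|V_X(F)|$. As above $(F)\not\subseteq I$, so $(I\colon(F))\neq I$ and Theorem~\ref{degree-initial-footprint-affine} gives $|V_X(F)|=\deg(S/(I,F))$, whence $\delta_r(\mathcal{L}_X)=\deg(S/I)-\deg(S/(I,F))\geq\deg(S/I)-\max\{\deg(S/(I,F'))\mid F'\in\mathcal{L}_{\prec,r}\}$. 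Combining the two inequalities proves equality. One should also check the max is attained/finite: since $K=\mathbb{F}_q$ is finite there are only finitely many monic standard polynomials, so $\mathcal{L}_{\prec,r}$ is a finite set and the maximum is well-defined.

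\textbf{Main obstacle.} The only genuinely delicate point is the passage, in both directions, between a subcode $D$ and a \emph{basis in $\mathcal{L}_{\prec,r}$} rather than an arbitrary basis of the corresponding subspace $W$: one must verify that replacing a basis by a $G\in\mathcal{L}_{\prec,r}$ with $KG=W$ does not change the support $\chi(D)$ (it does not, since $\chi(D)$ depends only on $W={\rm ev}^{-1}(D)$, by Lemma~\ref{seminar}) and does not change $|V_X(F)|$ (also true, as $V_X(F)$ depends only on the ideal $(F)=(G)$). Once this ``basis-independence'' is in place, everything else is the bookkeeping above, resting on Theorem~\ref{degree-initial-footprint-affine} and Lemma~\ref{seminar}.
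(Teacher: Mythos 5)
Your overall strategy is the paper's: use Lemma~\ref{apr26-20} to identify subcodes of $\mathcal{L}_X$ with subspaces of $\mathcal{L}$, use Lemma~\ref{seminar} to get $|\chi(D)|=|X|-|V_X(F)|$, use Lemma~\ref{distinct-implies-li} to replace an arbitrary basis by one in $\mathcal{L}_{\prec,r}$, and finish with a degree formula for $|V_X(F)|$. The paper runs this as a single chain of equalities rather than two inequalities, but that is a cosmetic difference.

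There is, however, one genuinely flawed inference, occurring in both halves: from $(F)\not\subseteq I$ you conclude ``in particular $(I\colon(F))\neq I$.'' These two conditions are not related in the way you need. By Lemma~\ref{vila-delio-feb27-20}, $(I\colon(F))=I$ is equivalent to $V_X(F)=\emptyset$, which is perfectly compatible with $(F)\not\subseteq I$ (e.g.\ $F=\{f\}$ with $f$ a standard polynomial vanishing nowhere on $X$; then $f\notin I$ yet $(I\colon f)=I$). The condition $(F)\subseteq I$ corresponds instead to $(I\colon(F))=S$, i.e.\ $V_X(F)=X$. Since Theorem~\ref{degree-initial-footprint-affine} carries the hypothesis $(I\colon(F))\neq I$, your appeal to it is unjustified exactly when $V_X(F)=\emptyset$, which certainly occurs (indeed it must occur when $r=\dim_K\mathcal{L}_X$ and the code is non-degenerate). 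The gap is easy to close: either cite Theorem~\ref{affine-zeros-formula}, which gives $|V_X(F)|=\deg(S/(I,F))$ with no hypothesis, or handle the excluded case directly, noting that when $(I\colon(F))=I$ Lemma~\ref{vila-delio-feb27-20} gives $(I,F)=S$, so $\deg(S/(I,F))=0=|V_X(F)|$ and the identity you need still holds. With that repair the argument is correct and coincides with the paper's proof.
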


\demo 
Let $P_1,\ldots,P_m$ be the points of $X$ and let $D$ be a subcode of $\mathcal{L}_X$ of 
dimension $r$. The evaluation map
${\rm ev}$ induces an isomorphism of $K$-vector spaces between $\mathcal{L}$ and 
$\mathcal{L}_X$ (Lemma~\ref{apr26-20}). Hence, by Lemma~\ref{seminar}, there are 
$f_1,\ldots,f_r$ linearly independent elements
of $\mathcal{L}$, that is, the set $F:=\{f_1,\ldots,f_r\}$ is in
$\mathcal{L}_{r}$, such that $D=\bigoplus_{i=1}^rK\beta_i$, where 
$\beta_i$ is $(f_i(P_1),\ldots,f_i(P_m))$, and the support 
$\chi(D)$ of $D$ is equal to $\bigcup_{i=1}^r\chi(\beta_i)$. 
Consider the matrix $A$ with rows $\beta_1, \ldots,\beta_r$. The
$j$-th column of $A$ is not zero  
if and only if $P_j$ is in $X\setminus
V_X(F)$. Therefore, since the number of non-zero 
columns of $A$ is $|\chi(D)|$ (Lemma~\ref{seminar}), we get:
\begin{equation}\label{mar19-20}
|\chi(D)|=|X\setminus
V_X(F)|.
\end{equation}
\quad Conversely let $F=\{f_1,\ldots,f_r\}$ be a set in
$\mathcal{L}_r$, then there is a subcode $D$ of
$\mathcal{L}_X$ of dimension $r$ with $|\chi(D)|=|X\setminus V_X(F)|$. Indeed, setting
$$\beta_i:=(f_i(P_1),\ldots,f_i(P_m))\ \mbox{ for }\
i=1,\ldots,r\mbox{ and } D:=K\beta_1+\cdots+K\beta_r,
$$
and using Lemma~\ref{seminar}, we obtain that
$|\chi(D)|=|X\setminus V_X(F)|$.  
The affine varieties defined by the elements of $\mathcal{L}_{r}$ and
$\mathcal{L}_{\prec,r}$ are the same: 
\begin{equation}\label{mar19-20-2} 
\{V_X(F)\colon F\in\mathcal{L}_{r}\}=\{V_X(F)\colon
F\in\mathcal{L}_{\prec,r}\}.
\end{equation}
\quad Indeed, the inclusion ``$\supset$'' is clear since
$\mathcal{L}_{\prec,r}\subset \mathcal{L}_{r}$
(Lemma~\ref{distinct-implies-li}). To show the 
inclusion ``$\subset$'' take $F\in \mathcal{L}_{r}$. Then, by
Lemma~\ref{distinct-implies-li}, there is $G\in \mathcal{L}_{\prec,r}$ such
that $KF=KG$. Thus, $V_X(F)=V_X(G)$ with $G\in
\mathcal{L}_{\prec,r}$, that is, $V_X(F)$ is in the right hand
side of Eq.~(\ref{mar19-20-2}). By
Proposition~\ref{behavior-hilbert-function}, $|X|=\deg(S/I)$. Hence,
by Eqs.~(\ref{mar19-20})--(\ref{mar19-20-2}) and  
Lemma~\ref{degree-formula-zeros-affine}, we obtain
\begin{eqnarray*}
\delta_r(\mathcal{L}_X)&=&\min\{|\chi(D)|\,\colon\, D\mbox{ is a subcode of
}\mathcal{L}_X \mbox{ and } \dim_K(D)=r \}
\\
&=&\min\{|X\setminus
V_X(F)|:\, F\in\mathcal{L}_{r}\}\\
&=&|X|-\max\{|V_X(F)|\colon\, F\in\mathcal{L}_{r}\}\\
&=&
\deg(S/I)-\max\{|V_X(F)|\colon\, F\in\mathcal{L}_{\prec,r}\}\\
&=&\deg(S/I)-\max\{\deg(S/(I,F)):\,
F\in\mathcal{L}_{\prec,r}\}.\quad\Box
\end{eqnarray*}

The next proposition allows us to transform any 
evaluation code into a standard
evaluation code relative to a monomial order, that is, one can 
apply Theorem~\ref{GHW-standard-code} to any evaluation code after
picking a monomial order and making a suitable transformation. 
We illustrate the case of generalized
toric codes in Example~\ref{transforming-example}
(Procedure~\ref{transforming-procedure}) and the case 
of projective
Reed--Muller-type codes in Example~\ref{10points-P3-F3}
(Procedure~\ref{10points-P3-F3-procedure}).

\begin{proposition}\label{transforming-to-standard-form} 
Let $\mathcal{L}_X$ be an evaluation code on $X$ and let $\prec$ be a
monomial order. Then, there is a standard evaluation code
$\widetilde{\mathcal{L}}_X$ on $X$ relative to $\prec$ such that 
$\widetilde{\mathcal{L}}_X=\mathcal{L}_X$.
\end{proposition}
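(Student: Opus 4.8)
The plan is to produce, from a given evaluation code $\mathcal{L}_X$, a linear subspace $\widetilde{\mathcal{L}}$ of $K\Delta_\prec(I)$ (with $I=I(X)$) whose image under ${\rm ev}$ equals $\mathcal{L}_X$, so that $\widetilde{\mathcal{L}}_X:=\widetilde{\mathcal{L}}_X$ is by definition a standard evaluation code relative to $\prec$. The key point is normal-form reduction modulo a Gr\"obner basis of $I$. First I would fix a Gr\"obner basis $\mathcal{G}$ of $I$ with respect to $\prec$, which exists since $I$ is an ideal of a polynomial ring over a field; by Macaulay's theorem (cited in the excerpt after Theorem~\ref{affine-zeros-formula} and in Lemma~\ref{nov6-15}), every $f\in S$ has a unique normal form $\mathrm{NF}_\prec(f)$ with respect to $\mathcal{G}$, lying in $K\Delta_\prec(I)$, and $f-\mathrm{NF}_\prec(f)\in I$. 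The map $\varphi\colon S\to K\Delta_\prec(I)$, $f\mapsto\mathrm{NF}_\prec(f)$, is $K$-linear (normal form is additive and scales correctly since $\Delta_\prec(I)$ is a $K$-basis of $S/I$), and it agrees with reduction modulo $I$.

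Next, given the evaluation code $\mathcal{L}_X={\rm ev}(\mathcal{L})$ for some finite-dimensional linear subspace $\mathcal{L}\subset S$, I would set $\widetilde{\mathcal{L}}:=\varphi(\mathcal{L})\subset K\Delta_\prec(I)$. This is a linear subspace of $K\Delta_\prec(I)$ of finite dimension (as the linear image of a finite-dimensional space). I then claim $\widetilde{\mathcal{L}}_X=\mathcal{L}_X$: for each $f\in\mathcal{L}$ we have $f-\varphi(f)\in I=I(X)$, hence $f$ and $\varphi(f)$ vanish on the same points and ${\rm ev}(f)={\rm ev}(\varphi(f))$; running over a spanning set of $\mathcal{L}$ and using linearity of ${\rm ev}$ gives ${\rm ev}(\mathcal{L})={\rm ev}(\varphi(\mathcal{L}))=\widetilde{\mathcal{L}}_X$. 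Thus $\widetilde{\mathcal{L}}_X$ is a standard evaluation code on $X$ relative to $\prec$ with the same image, and $\dim_K(\widetilde{\mathcal{L}})=\dim_K(\widetilde{\mathcal{L}}_X)=\dim_K(\mathcal{L}_X)$ by Lemma~\ref{apr26-20} (since $\widetilde{\mathcal{L}}\cap I(X)=(0)$ for a subspace of $K\Delta_\prec(I)$).

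The only mild subtlety — and I expect it to be the one point that needs care rather than a genuine obstacle — is verifying that $\varphi$ is well-defined and $K$-linear: uniqueness of normal forms requires a Gr\"obner basis (not merely a generating set), which is why the first step invokes the existence of $\mathcal{G}$; additivity then follows because $\varphi$ is exactly the composite $S\to S/I\xrightarrow{\sim}K\Delta_\prec(I)$ under Macaulay's isomorphism, and both arrows are $K$-linear. Everything else is formal. I would close by remarking that the construction is effective — one computes $\mathcal{G}$ and then the normal forms of a basis of $\mathcal{L}$ — which is what makes Theorem~\ref{GHW-standard-code} applicable in practice, and I would point to Examples~\ref{transforming-example} and~\ref{10points-P3-F3} for the generalized toric and projective Reed--Muller-type cases respectively.
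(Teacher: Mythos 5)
Your proof is correct and takes essentially the same route as the paper: both fix a Gr\"obner basis of $I(X)$ and replace $\mathcal{L}$ by the span of the normal forms (the paper divides a basis of $\mathcal{L}$ by $\mathcal{G}$ and spans the remainders, which is exactly your $\varphi(\mathcal{L})$ by linearity of the normal-form map), then observe that $f-\mathrm{NF}_\prec(f)\in I(X)$ forces ${\rm ev}(f)={\rm ev}(\mathrm{NF}_\prec(f))$. The only blemish is the typo ``$\widetilde{\mathcal{L}}_X:=\widetilde{\mathcal{L}}_X$'', which should read $\widetilde{\mathcal{L}}_X:={\rm ev}(\widetilde{\mathcal{L}})$.
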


\begin{proof} Let $X$ be the set of points
$\{P_1,\ldots,P_m\}$ of the affine space 
$\mathbb{A}^s$and let $\mathcal{G}$ be a Gr\"obner basis of
$I=I(X)$. Pick a $K$-basis $\{f_1,\ldots,f_k\}$ of $\mathcal{L}$.
For each $i$, let $r_i$ be the remainder on division of $f_i$ by
$\mathcal{G}$, that is, by the division algorithm \cite[Theorem~3,
p.~63]{CLO}, for each $i$ we can write $f_i=h_i+r_i$, where $h_i\in
I$, and $r_i=0$ or $r_i$ is a standard polynomial of $S/I$. We set
$$
\widetilde{\mathcal{L}}:=Kr_1+\cdots+Kr_k.
$$
\quad The evaluation code $\widetilde{\mathcal{L}}_X$ is a standard
evaluation code on $X$ relative to $\prec$ since
$\widetilde{\mathcal{L}}$ is a linear subspace of $K\Delta_\prec(I)$.
To show the inclusion
${\mathcal{L}}_X\subset\widetilde{\mathcal{L}}_X$ take a point $P$ in
${\mathcal{L}}_X$. Then, $P$ is equal to $(f(P_1),\ldots,f(P_m))$ for some
$f\in\mathcal{L}$. Using the equations $f_i=h_i+r_i$, $i=1,\ldots,k$,
we can write $f=h+r$, where $h\in I$ and
$r\in\widetilde{\mathcal{L}}$. Hence, $P$ is equal to
$(r(P_1),\ldots,r(P_m))$, that is, $P\in \widetilde{\mathcal{L}}_X$. 
To show the inclusion
$\widetilde{\mathcal{L}}_X\subset{\mathcal{L}}_X$ take a point $Q$ in 
$\widetilde{\mathcal{L}}_X$. Then, $Q$ is equal to $(g(P_1),\ldots,g(P_m))$ for some
$g\in\widetilde{\mathcal{L}}$. Using the equations $f_i=h_i+r_i$, $i=1,\ldots,k$,
we can write $g=g_1+g_2$, where $g_1\in I$ and
$g_2\in{\mathcal{L}}$. Hence, $Q$ is equal to
$(g_2(P_1),\ldots,g_2(P_m))$, that is, $Q\in{\mathcal{L}}_X$. 
\end{proof}

Let $\mathcal{L}_X$ be a standard evaluation code on $X$ relative to
$\prec$ of dimension $k$. Fix an integer $1\leq r\leq k$. The
generalized Hamming weights of $\mathcal{L}_X$ are hard to compute
because the size of $\mathcal{L}_{\prec,r}$ could be very large as we now
explain.  The 
\textit{Grassmannian} of $\mathcal{L}$, denoted $G(r,k)$, is the set
of $r$-dimensional subspaces of $\mathcal{L}$. Consider the
equivalence relation on $\mathcal{L}_{\prec,r}$ given by: 
$F\sim G$ if and only if $KF=KG$. The map 
$$
\vartheta\colon\mathcal{L}_{\prec,r}/\sim\ \ \longrightarrow G(r,k),\ \ \
[F]\longmapsto KF, 
$$
is bijective. This follows from Lemma~\ref{distinct-implies-li}. If
$r=1$, then $|\mathcal{L}_{\prec,r}|=|G(1,k)|=(q^k-1)/(q-1)$. The
following formula
for $G(r,k)$ can be found in \cite[Proposition~1.7.2,
p.~57]{Stanley-2011}:
$$
|G(r,k)|=\frac{(q^k-1)(q^k-q)\cdots(q^k-q^{r-1})}
{(q^r-1)(q^r-q)\cdots(q^r-q^{r-1})}.
$$
\quad Let $F$ and $G$ be in $\mathcal{L}_{\prec,r}$.
If $KF=KG$, then $\{{\rm in}_\prec(f)\vert\, f\in F\}$ is equal to 
$\{{\rm in}_\prec(g)\vert\, g\in G\}$. The converse does not hold in
general but it may hold for some special families of standard
evaluation codes.

One of the applications of
Theorem~\ref{GHW-standard-code} is a formula for   
the $r$-th generalized Hamming weight $\delta_r(C_X(d))$ of an affine
Reed--Muller-type code $C_X(d)$ that can be used to compute
$\delta_r(C_X(d))$ for small values of $q,r,d,s$  using  
the software system {\it Macaulay\/}$2$ \cite{mac2}
(Example~\ref{5points-in-A2}). In the next result we assume that
$\prec$ is a graded monomial order, that is,
monomials are first compared by their total degrees
(Example~\ref{lex-example}). For use below, let $\mathcal{F}_{\prec,d,r}$ be the set of  
all $F=\{f_1,\ldots,f_r\}$ contained in $S_{\leq d}$ such 
that $f_i$ is a standard monic polynomial of $S/I$ for all $i$ and 
${\rm in}_\prec(f_1),\ldots,{\rm in}_\prec(f_r)$ are distinct
monomials.

\begin{corollary}\label{rth-GHW-affine}
Let $K=\mathbb{F}_q$ be a finite field, let $X$ be a subset
of $\mathbb{A}^s$, and let $I=I(X)$ be the vanishing ideal of $X$. If
$\prec$ is a graded monomial order, then  
$$
\delta_r(C_X(d))=\deg(S/I)-\max\{\deg(S/(I,F))\vert\,
F\in\mathcal{F}_{\prec, d,r}\}\ \mbox{ for }\ 1\leq r\leq H_I^a(d).
$$
\end{corollary}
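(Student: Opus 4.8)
The plan is to deduce this corollary directly from Theorem~\ref{GHW-standard-code} applied to the specific linear subspace $\mathcal{L} = S_{\leq d}$. The obstacle is that $S_{\leq d}$ is generally \emph{not} a subspace of $K\Delta_\prec(I)$, so Theorem~\ref{GHW-standard-code} does not apply verbatim to $C_X(d) = (S_{\leq d})_X$; one must first pass to the associated standard evaluation code. First I would invoke Proposition~\ref{transforming-to-standard-form} with $\mathcal{L} = S_{\leq d}$ to obtain a linear subspace $\widetilde{\mathcal{L}} \subseteq K\Delta_\prec(I)$ with $\widetilde{\mathcal{L}}_X = C_X(d)$, where $\widetilde{\mathcal{L}}$ is spanned by the division-algorithm remainders $r_1,\ldots,r_k$ of a $K$-basis $f_1,\ldots,f_k$ of $S_{\leq d}$ modulo a Gr\"obner basis of $I$. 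The key point is that, because $\prec$ is a \emph{graded} monomial order, division by $\mathcal{G}$ does not raise degrees: each remainder $r_i$ lies in $S_{\leq d}$, so $\widetilde{\mathcal{L}} \subseteq S_{\leq d}\cap K\Delta_\prec(I)$. Conversely, every standard monomial of $S/I$ of degree $\leq d$ is its own remainder and hence lies in $\widetilde{\mathcal{L}}$; therefore $\widetilde{\mathcal{L}} = S_{\leq d}\cap K\Delta_\prec(I)$ is precisely the span of the standard monomials of degree at most $d$, and $\dim_K(\widetilde{\mathcal{L}}) = H_I^a(d)$ by Macaulay's theorem.

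Next I would apply Theorem~\ref{GHW-standard-code} to $\widetilde{\mathcal{L}}_X = C_X(d)$, obtaining
$$
\delta_r(C_X(d)) = \deg(S/I) - \max\{\deg(S/(I,F)) \mid F \in \widetilde{\mathcal{L}}_{\prec,r}\}
$$
for $1 \leq r \leq H_I^a(d)$, where $\widetilde{\mathcal{L}}_{\prec,r}$ consists of the $r$-subsets of $\widetilde{\mathcal{L}}\setminus\{0\}$ with distinct initial monomials and monic entries. It remains to check that replacing $\widetilde{\mathcal{L}}_{\prec,r}$ by $\mathcal{F}_{\prec,d,r}$ does not change the maximum. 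The inclusion $\widetilde{\mathcal{L}}_{\prec,r} \subseteq \mathcal{F}_{\prec,d,r}$ is immediate: any monic standard polynomial in $\widetilde{\mathcal{L}}$ is a monic standard polynomial of $S/I$ lying in $S_{\leq d}$. For the reverse direction, given $F = \{f_1,\ldots,f_r\} \in \mathcal{F}_{\prec,d,r}$, each $f_i$ is a monic standard polynomial of $S/I$ with $\deg(f_i)\le d$; since $\prec$ is graded, every monomial appearing in $f_i$ has degree at most that of $\mathrm{in}_\prec(f_i) \le d$, hence lies in $S_{\leq d}\cap K\Delta_\prec(I) = \widetilde{\mathcal{L}}$. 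Thus $F \in \widetilde{\mathcal{L}}_{\prec,r}$, giving $\mathcal{F}_{\prec,d,r} = \widetilde{\mathcal{L}}_{\prec,r}$.

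Combining the two displays yields the stated formula, with the range $1 \leq r \leq H_I^a(d) = \dim_K(C_X(d))$. The only subtlety worth spelling out carefully is the role of gradedness: it is used twice, once to keep the remainders inside $S_{\leq d}$ and once to guarantee that every monic standard polynomial of degree $\le d$ has all its monomials of degree $\le d$. I expect this gradedness bookkeeping, rather than anything structural, to be the main thing requiring attention; everything else is a direct transcription of Theorem~\ref{GHW-standard-code} and Proposition~\ref{transforming-to-standard-form}.
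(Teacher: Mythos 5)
Your proposal is correct and follows essentially the same route as the paper: both identify $C_X(d)$ with the standard evaluation code on $\mathcal{L}=K\Delta_\prec(I)_{\leq d}$ via the division algorithm (where gradedness of $\prec$ is what keeps the remainders in $S_{\leq d}$), observe that $\mathcal{L}_{\prec,r}=\mathcal{F}_{\prec,d,r}$, and apply Theorem~\ref{GHW-standard-code}. The only cosmetic difference is that you reach $\mathcal{L}=S_{\leq d}\cap K\Delta_\prec(I)$ by formally invoking Proposition~\ref{transforming-to-standard-form}, whereas the paper defines this space directly and checks $\mathcal{L}_X=C_X(d)$ in one line.
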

\begin{proof} 
Let $P_1,\ldots,P_m$ be the points of $X$ and 
let $\mathcal{L}=K\Delta_\prec(I)_{\leq d}$ be the linear space 
of standard polynomials of $S/I$ of degree at most $d$ together with
the zero vector. Note that
$\mathcal{L}_{\prec,r}=\mathcal{F}_{\prec,d,r}$. Hence, by
Theorem~\ref{GHW-standard-code}, we need only show that
$\mathcal{L}_X=C_X(d)$. Clearly, $\mathcal{L}_X\subset C_X(d)$. To
show the other inclusion take a point $Q$ in $C_X(d)$, that is, 
$Q=(f(P_1),\ldots,f(P_m))$  for
some $f\in S_{\leq d}$. As
$\prec$ is a graded monomial order, by  
the division algorithm \cite[Theorem~3, p.~63]{CLO}, $f$ can be written as $f=p+h$,
where $p$ is in $I_{\leq d}$ and $h$ is a $K$-linear combination of standard
monomials of degree at most $d$. Hence, $Q=(h(P_1),\ldots,h(P_m))$ 
and $Q$ is in $\mathcal{L}_X$. 
\end{proof}

The minimum distance of the linear code $C_X(d)$ is $\delta_1(C_X(d))$ and can be
computed using the standard monic polynomials of $S/I$ of degree at
most $d$ (Corollary~\ref{rth-GHW-affine}). The next result can be
used to compute the minimum distance 
of $C_X(d)$ recursively using only the standard
monic polynomials of $S/I$ of degree $d$ (Example~\ref{12points-A3-F3}). 

\begin{corollary}\label{min-dis-affine}
Let $K=\mathbb{F}_q$ be a finite field, let $X$ be a subset
of $\mathbb{A}^s$, let $I=I(X)$ be the vanishing ideal of $X$, let
$\prec$ be a graded monomial order, and let $\Delta_\prec^p(I)_d$ be
the set of standard monic polynomials of $S/I$ of degree $d$. If 
$d\geq 2$ and 
$\delta(C_X(d-1))>1$, then   
$$
\delta(C_X(d))=\deg(S/I)-\max\{\deg(S/(I,f))\vert\,
f\in\Delta_\prec^p(I)_d\}.
$$
\end{corollary}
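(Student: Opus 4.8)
The plan is to derive the formula from Corollary~\ref{rth-GHW-affine} (applied with $r=1$) by showing that, under the hypothesis $\delta(C_X(d-1))>1$, the maximum of $\deg(S/(I,f))$ over standard monic polynomials $f$ of degree at most $d$ is already attained by some standard monic polynomial of degree exactly $d$. For $e\ge 1$ put $M_{\le e}:=\max\{\deg(S/(I,f))\mid f\ \text{standard monic},\ \deg(f)\le e\}$, and let $M_{=d}$ be the corresponding maximum taken over $f\in\Delta_\prec^p(I)_d$; since $K$ is finite, both families of polynomials are finite and the maxima are attained. Because $\prec$ is graded, $\mathcal{F}_{\prec,e,1}$ is exactly the set of singletons $\{f\}$ with $f$ standard monic of degree at most $e$, so by Corollary~\ref{rth-GHW-affine} (case $r=1$) and Proposition~\ref{behavior-hilbert-function} we have $\delta(C_X(e))=\deg(S/I)-M_{\le e}=|X|-M_{\le e}$ for $e=d-1$ and $e=d$ (here $d\ge 2$ is used so that $d-1\ge 1$). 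Hence it suffices to prove $M_{\le d}=M_{=d}$.

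First I would record two preliminaries. By Theorem~\ref{affine-zeros-formula}, $\deg(S/(I,f))=|V_X(f)|$ for every $f\in S$. Moreover $\delta(C_X(d-1))>1$ forces $M_{\le d-1}=|X|-\delta(C_X(d-1))\le|X|-2$, and also $|X|\ge 2$ (the length of $C_X(d-1)$ is $|X|$ and $\delta(C_X(d-1))\le|X|$). Now suppose, for a contradiction, that $M_{\le d}$ is attained by a standard monic polynomial $f$ with $e:=\deg(f)<d$. Then $e\le d-1$, so $|V_X(f)|=\deg(S/(I,f))\le M_{\le d-1}\le|X|-2$, and therefore there are two distinct points $P=(p_1,\dots,p_s)$ and $Q=(q_1,\dots,q_s)$ of $X$ with $f(P)\ne 0$ and $f(Q)\ne 0$. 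Choosing an index $j$ with $p_j\ne q_j$ and setting $\ell:=t_j-p_j$, the product $f\ell$ has degree $e+1\le d$, satisfies $(f\ell)(Q)=f(Q)\ell(Q)\ne 0$ (so $f\ell\notin I$, in particular $f\ell\ne 0$), and $V_X(f\ell)=V_X(f)\cup(V_X(\ell)\cap X)\supseteq V_X(f)\cup\{P\}$, whence $|V_X(f\ell)|\ge|V_X(f)|+1$.

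Next I would replace $f\ell$ by a standard polynomial. Fix a Gr\"obner basis $\mathcal{G}$ of $I$ and let $g$ be the remainder of $f\ell$ on division by $\mathcal{G}$ (division algorithm \cite[Theorem~3, p.~63]{CLO}). Since $\prec$ is graded, the division algorithm does not raise degrees, so $\deg(g)\le\deg(f\ell)\le d$; since $f\ell\notin I$ we have $g\ne 0$; and $f\ell-g\in I$ gives $g(R)=(f\ell)(R)$ for all $R\in X$, so $V_X(g)=V_X(f\ell)$ and $|V_X(g)|\ge|V_X(f)|+1$. Rescaling $g$ to be monic changes neither its degree, nor $V_X(g)$, nor the fact that all its monomials are standard, so $g$ is a standard monic polynomial of degree at most $d$ with $\deg(S/(I,g))=|V_X(g)|>|V_X(f)|=M_{\le d}$, contradicting the maximality of $M_{\le d}$. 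Hence $M_{\le d}$ is attained at degree exactly $d$ (in particular $\Delta_\prec^p(I)_d\ne\emptyset$), giving $M_{\le d}=M_{=d}$; substituting into $\delta(C_X(d))=\deg(S/I)-M_{\le d}$ yields the asserted formula.

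I expect the main obstacle to be this middle ``degree-raising'' step: converting a lower-degree standard polynomial with many zeros into a degree-$d$ standard one with at least as many zeros. The device is to multiply by a single affine linear form that vanishes at one chosen point of $X\setminus V_X(f)$ but not at another such point --- which is exactly where $\delta(C_X(d-1))>1$ is used, to guarantee the existence of two such points --- thereby raising the degree by one while gaining a zero, and then to take the normal form modulo a Gr\"obner basis, using that $\prec$ is graded to keep the degree at most $d$. One could iterate this to reach degree exactly $d$, but a single step already contradicts maximality of $M_{\le d}$.
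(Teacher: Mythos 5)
Your proof is correct, and the overall skeleton matches the paper's: both reduce the statement to showing that the maximum of $\deg(S/(I,f))$ over standard monic polynomials of degree at most $d$ is attained in degree exactly $d$. Where you diverge is in how that is established. The paper disposes of it in two lines by invoking Proposition~\ref{behavior-hilbert-function}: since $\delta(C_X(e))$ is strictly decreasing in $e$ until it reaches $1$, the hypothesis $\delta(C_X(d-1))>1$ gives $\delta(C_X(d))<\delta(C_X(d-1))\leq\deg(S/I)-\deg(S/(I,f))=\delta(C_X(d))$ whenever an optimal $f$ had degree $<d$. You instead re-derive the relevant instance of that monotonicity from scratch: using $\delta(C_X(d-1))>1$ to produce two points $P\neq Q$ of $X$ outside $V_X(f)$, multiplying $f$ by the linear form $t_j-p_j$ to gain the zero $P$ while keeping $Q$, and then passing to the normal form modulo a Gr\"obner basis (where gradedness of $\prec$ keeps the degree at most $d$) to land back in the class of standard monic polynomials with strictly more zeros. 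Both arguments are sound; the paper's is shorter because it leans on a cited result, while yours is self-contained, constructive, and makes transparent exactly where the hypothesis $\delta(C_X(d-1))>1$ enters (namely, to guarantee the two points $P$ and $Q$). Your version also implicitly shows $\Delta_\prec^p(I)_d\neq\emptyset$, which the formula tacitly requires.
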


\begin{proof} Let $\Delta_\prec^p(I)_{\leq d}$ be the set of standard
monic polynomials of $S/I$ of degree at most $d$. As
$\mathcal{F}_{\prec,d,1}$ is equal 
to $\Delta_\prec^p(I)_{\leq d}$, by Corollary~\ref{rth-GHW-affine}, one
has
\begin{eqnarray}
& &\delta(C_X(d))=\deg(S/I)-\max\{\deg(S/(I,f))\vert\,
f\in\Delta_\prec^p(I)_{\leq d}\}\\ & & \ \  \ \quad\quad\quad
\quad\quad\quad\leq
\deg(S/I)-\max\{\deg(S/(I,f))\vert\, 
f\in\Delta_\prec^p(I)_d\}.\label{apr20-20}
\end{eqnarray}
\quad This proves the inequality ``$\leq$''. To show the inequality
``$\geq$''pick $f$ in $\Delta_\prec^p(I)_{\leq d}$ such that 
$\delta(C_X(d))=\deg(S/I)-\deg(S/(I,f))$. It suffices to show that
$f$ has degree $d$, because then Eq.~(\ref{apr20-20}) becomes an
equality. If $\deg(f)<d$, then by
Proposition~\ref{behavior-hilbert-function}, we get 
$$\delta(C_X(d))<\delta(C_X(d-1))\leq\deg(S/I)-\deg(S/(I,f))=\delta(C_X(d)),$$
a contradiction.
\end{proof}

For non-graded orders we obtain the following upper bound for
$\delta_r(C_X(d))$. 
\begin{corollary}\label{rth-GHW-affine-bound}
Let $K=\mathbb{F}_q$ be a finite field, let $X$ be a subset
of $\mathbb{A}^s$, and let $I=I(X)$ be the vanishing ideal of $X$. If
$\prec$ is a monomial order, then  
$$
\delta_r(C_X(d))\leq \deg(S/I)-\max\{\deg(S/(I,F))\vert\,
F\in\mathcal{F}_{\prec, d,r}\}.
$$
\end{corollary}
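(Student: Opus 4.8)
The plan is to run the proof of Corollary~\ref{rth-GHW-affine} as far as it goes without using gradedness, and to absorb the failure of gradedness into a single inequality. Let $P_1,\dots,P_m$ be the points of $X$ and, exactly as in that proof, put $\mathcal{L}:=K\Delta_\prec(I)_{\leq d}$, the $K$-linear space spanned by the standard monomials of $S/I$ of degree at most $d$ together with $0$. Then $\mathcal{L}^{*}=\mathcal{L}\setminus\{0\}$ is the set of nonzero standard polynomials of $S/I$ of degree at most $d$, so $\mathcal{L}_{\prec,r}=\mathcal{F}_{\prec,d,r}$ (as already noted in the proof of Corollary~\ref{rth-GHW-affine}). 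Since every standard monomial of degree at most $d$ lies in $S_{\leq d}$, we have $\mathcal{L}\subseteq S_{\leq d}$, hence the evaluation map gives $\mathcal{L}_X\subseteq C_X(d)$; and since $\mathcal{L}$ is a linear subspace of $K\Delta_\prec(I)$, the code $\mathcal{L}_X$ is a standard evaluation code on $X$ relative to $\prec$.

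The next step uses only the elementary fact that for linear codes $C'\subseteq C$ over $K$ and $1\leq r\leq\dim_K(C')$ one has $\delta_r(C')\geq\delta_r(C)$, which is immediate since the minimum defining $\delta_r(C')$ is taken over a subfamily of the $r$-dimensional subspaces entering the minimum for $\delta_r(C)$. Taking $C'=\mathcal{L}_X$, $C=C_X(d)$ and then applying Theorem~\ref{GHW-standard-code} to the standard evaluation code $\mathcal{L}_X$, for $1\leq r\leq\dim_K(\mathcal{L}_X)$ I obtain
$$\delta_r(C_X(d))\leq\delta_r(\mathcal{L}_X)=\deg(S/I)-\max\{\deg(S/(I,F))\,\vert\,F\in\mathcal{L}_{\prec,r}\}=\deg(S/I)-\max\{\deg(S/(I,F))\,\vert\,F\in\mathcal{F}_{\prec,d,r}\},$$
which is the assertion. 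The range $1\leq r\leq\dim_K(\mathcal{L}_X)$ is precisely the range in which $\mathcal{F}_{\prec,d,r}\neq\emptyset$; for larger $r$ the right-hand side is read with the usual convention for a maximum over the empty set and the inequality is vacuous.

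There is no real obstacle here; the only point that must be understood is \emph{why} the equality of Corollary~\ref{rth-GHW-affine} cannot be recovered. For a graded order the division algorithm reduces every $f\in S_{\leq d}$ to a standard polynomial of degree at most $d$, so $\mathcal{L}_X=C_X(d)$ and the displayed inequality is an equality; for a non-graded order the remainder may have degree exceeding $d$, so $\mathcal{L}_X$ can be a proper subcode of $C_X(d)$ and only ``$\leq$'' survives. If one prefers an argument that does not name $\mathcal{L}_X$, the bound also follows directly: given $F=\{f_1,\dots,f_r\}\in\mathcal{F}_{\prec,d,r}$, set $\beta_i:=(f_i(P_1),\dots,f_i(P_m))$ and $D:=K\beta_1+\cdots+K\beta_r\subseteq C_X(d)$; the $f_i$ are linearly independent over $K$ by Lemma~\ref{distinct-implies-li}(b), and as $\mathcal{L}\cap I(X)=(0)$ (Lemma~\ref{apr26-20}) the $\beta_i$ are linearly independent too, so $\dim_K(D)=r$; then Lemma~\ref{seminar} gives $|\chi(D)|=|X\setminus V_X(F)|=\deg(S/I)-\deg(S/(I,F))$ by Proposition~\ref{behavior-hilbert-function} and Lemma~\ref{degree-formula-zeros-affine} (with $V_X(F)=\emptyset$ handled by Lemma~\ref{vila-delio-feb27-20}), and taking $F\in\mathcal{F}_{\prec,d,r}$ with $\deg(S/(I,F))$ largest completes the proof.
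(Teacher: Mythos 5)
Your argument is correct and is essentially the paper's own proof: you take $\mathcal{L}=K\Delta_\prec(I)_{\leq d}$, observe $\mathcal{L}_{\prec,r}=\mathcal{F}_{\prec,d,r}$ and $\mathcal{L}_X\subseteq C_X(d)$, invoke the monotonicity $\delta_r(C_X(d))\leq\delta_r(\mathcal{L}_X)$, and conclude by Theorem~\ref{GHW-standard-code}. The extra remarks (why equality fails for non-graded orders, and the alternative direct construction of the subcode $D$) are sound but not needed.
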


\begin{proof} Let $\mathcal{L}=K\Delta_\prec(I)_{\leq d}$ be the linear space 
of standard polynomials of $S/I$ of degree at most $d$ together with
the zero vector. Note that
$\mathcal{L}_{\prec,r}=\mathcal{F}_{\prec,d,r}$. As 
$\mathcal{L}_X\subset C_X(d)$, one has
$\delta_r(C_X(d))\leq\delta_r(\mathcal{L}_X)$. Therefore, the inequality follows from
Theorem~\ref{GHW-standard-code}.
\end{proof}

Fix a monomial order $\prec$ on $S$, let $X$ be a subset of
$\mathbb{A}^s$, and let $I=I(X)$ be its vanishing ideal. 
Given a $K$-linear subspace $\mathcal{L}$ of 
$K\Delta_\prec(I)$, let $\mathcal{N}_{\prec,r}$ be the 
family of all subsets $N$ of ${\rm in}_\prec(\mathcal{L}^*):=\{{\rm
in}_\prec(f)\vert\, f\in\mathcal{L}^*\}$
with $r$ distinct elements. 
The $r$-th \textit{footprint} of the standard evaluation code $\mathcal{L}_X$, denoted
${\rm fp}_r(\mathcal{L}_X)$, 
is given 
by 
$$
{\rm fp}_r(\mathcal{L}_X):=\deg(S/I)-\max\{\deg(S/({\rm
in}_\prec(I),N))\,\vert\,
N\in\mathcal{N}_{\prec,r}\}.
$$
\quad The other main result of Section~\ref{GHW-section} is the
following lower bound for $\delta_r(\mathcal{L}_X)$.
\begin{theorem}\label{rth-footprint-lower-bound} 
Let $X$ be a subset of $\mathbb{A}^s$, let $I=I(X)$ be the vanishing
ideal of $X$, let $\mathcal{L}$ be a linear
subspace of $K\Delta_\prec(I)$, and let $\mathcal{L}_X$ be the standard evaluation code
on $X$. Then 
$${\rm fp}_r(\mathcal{L}_X)\leq \delta_r(\mathcal{L}_X)\ \mbox{ for
}d\geq 1\mbox{ and }1\leq r\leq\dim_K(\mathcal{L}_X).
$$
\end{theorem}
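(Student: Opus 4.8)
The strategy is to compare the footprint quantity with the degree quantity that appears in Theorem~\ref{GHW-standard-code}, by matching each admissible family $F\in\mathcal{L}_{\prec,r}$ against the set of leading monomials $N=\{{\rm in}_\prec(f)\colon f\in F\}\in\mathcal{N}_{\prec,r}$. In view of the formulas
$$
\delta_r(\mathcal{L}_X)=\deg(S/I)-\max\{\deg(S/(I,F))\colon F\in\mathcal{L}_{\prec,r}\},\qquad
{\rm fp}_r(\mathcal{L}_X)=\deg(S/I)-\max\{\deg(S/({\rm in}_\prec(I),N))\colon N\in\mathcal{N}_{\prec,r}\},
$$
it suffices to prove the pointwise inequality $\deg(S/(I,F))\leq\deg(S/({\rm in}_\prec(I),N))$ for every $F\in\mathcal{L}_{\prec,r}$ with associated leading-monomial set $N$; taking the maximum over $F$ on the left and noting $N$ ranges inside $\mathcal{N}_{\prec,r}$ then yields ${\rm fp}_r(\mathcal{L}_X)\leq\delta_r(\mathcal{L}_X)$. (One must first dispose of the trivial case $(I\colon(F))=I$, where $V_X(F)=\emptyset$ and $\deg(S/(I,F))=0$ by Lemma~\ref{vila-delio-feb27-20}, so the inequality is automatic.)

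The key step is therefore to bound $\deg(S/(I,F))$ by $\deg(S/({\rm in}_\prec(I),N))$. This is essentially Theorem~\ref{degree-initial-footprint-affine}: writing $J=(I,F)$ and $L=({\rm in}_\prec(I),{\rm in}_\prec(F))$, that theorem gives $\deg(S/J)\leq\deg(S/L)$ whenever $(I\colon(F))\neq I$. Since ${\rm in}_\prec(F)=\{{\rm in}_\prec(f)\colon f\in F\}=N$ (the $f_i$ are monic with distinct leading monomials, by definition of $\mathcal{L}_{\prec,r}$), we have $L=({\rm in}_\prec(I),N)$ and hence $\deg(S/(I,F))\leq\deg(S/({\rm in}_\prec(I),N))$, exactly as needed. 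Taking maxima over $F\in\mathcal{L}_{\prec,r}$, and observing that each such $N$ belongs to $\mathcal{N}_{\prec,r}$ (it is a set of $r$ distinct monomials, each of the form ${\rm in}_\prec(f_i)$ with $f_i\in\mathcal{L}^*$), gives
$$
\max\{\deg(S/(I,F))\colon F\in\mathcal{L}_{\prec,r}\}\leq\max\{\deg(S/({\rm in}_\prec(I),N))\colon N\in\mathcal{N}_{\prec,r}\},
$$
which upon subtracting from $\deg(S/I)$ reverses the inequality and proves the theorem.

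The main obstacle is a bookkeeping point rather than a deep one: one should check that the maximum defining ${\rm fp}_r$ really is taken over a family no smaller than $\{N : N=\{{\rm in}_\prec(f)\colon f\in F\},\ F\in\mathcal{L}_{\prec,r}\}$, i.e.\ that every $r$-element subset of ${\rm in}_\prec(\mathcal{L}^*)$ arises as the leading-monomial set of some $F\in\mathcal{L}_{\prec,r}$. This is immediate: given distinct monomials $m_1,\dots,m_r\in{\rm in}_\prec(\mathcal{L}^*)$, pick $f_i\in\mathcal{L}^*$ with ${\rm in}_\prec(f_i)=m_i$, rescale each $f_i$ to be monic, and then $F=\{f_1,\dots,f_r\}\in\mathcal{L}_{\prec,r}$ by Lemma~\ref{distinct-implies-li}(b); so the two index sets coincide and no loss occurs in passing between them. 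Combining this with the pointwise degree bound from Theorem~\ref{degree-initial-footprint-affine} completes the argument. (The statement as displayed mentions ``$d\geq 1$'', but no degree parameter $d$ enters the hypotheses; this is presumably a typo and the proof does not require any such condition.)
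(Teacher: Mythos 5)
Your proposal is correct and follows essentially the same route as the paper: both reduce the claim, via the degree formula of Theorem~\ref{GHW-standard-code}, to the inequality $\deg(S/(I,F))\leq\deg(S/({\rm in}_\prec(I),{\rm in}_\prec(F)))$ supplied by Theorem~\ref{degree-initial-footprint-affine}, together with the observation that ${\rm in}_\prec(F)\in\mathcal{N}_{\prec,r}$ for $F\in\mathcal{L}_{\prec,r}$. Your explicit handling of the degenerate case $(I\colon(F))=I$ is a sensible (if minor) addition, and the ``bookkeeping'' check that every $N\in\mathcal{N}_{\prec,r}$ arises from some $F$ is true but not actually needed for this direction of the inequality.
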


\begin{proof} By Theorem~\ref{GHW-standard-code}, there is 
$F\in\mathcal{L}_{\prec,r}$ such that
$\delta_r(\mathcal{L}_X)=\deg(S/I)-\deg(S/(I,F))$. 
Hence, by Theorem~\ref{degree-initial-footprint-affine}, and noticing
that ${\rm in}_\prec(F)\in\mathcal{N}_{\prec,r}$ because
$F\in\mathcal{L}_{\prec,r}$, we 
get 
$$
\deg(S/(I,F))\leq\deg(S/({\rm in}_\prec(I),{\rm in}_\prec(I)))\leq
\max\{\deg(S/({\rm 
in}_\prec(I),N))\,\vert\,
N\in\mathcal{N}_{\prec,r}\}.
$$
\quad Thus, ${\rm fp}_r(\mathcal{L}_X)\leq \delta_r(\mathcal{L}_X)$. 
\end{proof}

\quad Given integers $d,r\geq 1$, let
$\mathcal{M}_{\prec, d, r}$ be the 
set of all subsets $M$ of $\Delta_\prec(I)\cap
S_{\leq d}$
with $r$ distinct elements. 
The $r$-th \textit{footprint} of the Reed--Muller-type code $C_X(d)$,
denoted ${\rm fp}_I(d,r)$, is given 
by 
$$
{\rm fp}_I(d,r):=\deg(S/I)-\max\{\deg(S/({\rm
in}_\prec(I),M))\,\vert\,
M\in\mathcal{M}_{\prec, d,r}\}. 
$$
\quad We come to one of the main applications of
Theorem~\ref{rth-footprint-lower-bound}.

\begin{corollary}\label{rth-footprint-lower-bound-Reed-Muller} 
Let $K=\mathbb{F}_q$ be a finite field, let $X$ be a subset
of $\mathbb{A}^s$, let $I=I(X)$ be the vanishing ideal of $X$, and
let $\prec$ be a graded monomial order. 
Then
$${\rm fp}_{I}(d,r)\leq \delta_r(C_X(d))\ \mbox{ for
}d\geq 1\mbox{ and }1\leq r\leq H_{I}^a(d).$$
\end{corollary}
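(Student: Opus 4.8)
The plan is to obtain this as a direct specialization of Theorem~\ref{rth-footprint-lower-bound}, paralleling the way Corollary~\ref{rth-GHW-affine} is deduced from Theorem~\ref{GHW-standard-code}. First I would set $\mathcal{L}:=K\Delta_\prec(I)_{\leq d}$, the $K$-linear span of the standard monomials of $S/I$ of degree at most $d$ together with $0$; this is a linear subspace of $K\Delta_\prec(I)$, so $\mathcal{L}_X$ is a standard evaluation code on $X$ relative to $\prec$. Since $\prec$ is graded, the division-algorithm argument used in the proof of Corollary~\ref{rth-GHW-affine} applies verbatim to give $\mathcal{L}_X=C_X(d)$: any $f\in S_{\leq d}$ reduces modulo a Gr\"obner basis of $I$ to a $K$-linear combination of standard monomials of degree at most $d$ without raising the degree, so $C_X(d)\subset\mathcal{L}_X$, while $\mathcal{L}_X\subset C_X(d)$ is clear. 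By Macaulay's theorem the standard monomials of degree $\leq d$ form a $K$-basis of $S_{\leq d}/I_{\leq d}$, hence $\dim_K(\mathcal{L})=H_I^a(d)$, and by Lemma~\ref{apr26-20}, $\dim_K(\mathcal{L}_X)=\dim_K(\mathcal{L})=H_I^a(d)$, so the range $1\leq r\leq H_I^a(d)$ is exactly $1\leq r\leq\dim_K(\mathcal{L}_X)$.

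The second step is to identify the two footprint families. I claim ${\rm in}_\prec(\mathcal{L}^*)=\Delta_\prec(I)\cap S_{\leq d}$. For ``$\subset$'': every nonzero element of $\mathcal{L}$ is a $K$-linear combination of standard monomials of degree $\leq d$, and because $\prec$ is graded its initial monomial is one of those monomials, hence a standard monomial of degree $\leq d$. For ``$\supset$'': each standard monomial $t^a$ with $\deg(t^a)\leq d$ lies in $\mathcal{L}^*$ and equals its own initial monomial. Consequently the family $\mathcal{N}_{\prec,r}$ of $r$-element subsets of ${\rm in}_\prec(\mathcal{L}^*)$ coincides with the family $\mathcal{M}_{\prec,d,r}$ of $r$-element subsets of $\Delta_\prec(I)\cap S_{\leq d}$, and comparing the two definitions gives ${\rm fp}_r(\mathcal{L}_X)={\rm fp}_I(d,r)$.

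Finally, applying Theorem~\ref{rth-footprint-lower-bound} to $\mathcal{L}_X$ yields ${\rm fp}_r(\mathcal{L}_X)\leq\delta_r(\mathcal{L}_X)$ for $1\leq r\leq\dim_K(\mathcal{L}_X)$; combining this with $\mathcal{L}_X=C_X(d)$ and ${\rm fp}_r(\mathcal{L}_X)={\rm fp}_I(d,r)$ gives ${\rm fp}_I(d,r)\leq\delta_r(C_X(d))$ for $1\leq r\leq H_I^a(d)$. There is no substantial obstacle; the only point needing care is the equality ${\rm in}_\prec(\mathcal{L}^*)=\Delta_\prec(I)\cap S_{\leq d}$, where gradedness of $\prec$ is essential to keep both the reduction and the passage to initial terms inside the span of degree-$\leq d$ standard monomials. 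Without that hypothesis one only gets $\mathcal{L}_X\subset C_X(d)$ and hence merely an inequality, as already reflected in Corollary~\ref{rth-GHW-affine-bound}.
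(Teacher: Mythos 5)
Your proof is correct and follows essentially the same route as the paper's: take $\mathcal{L}=K\Delta_\prec(I)_{\leq d}$, use gradedness and the division algorithm to get $\mathcal{L}_X=C_X(d)$, identify ${\rm in}_\prec(\mathcal{L}^*)=\Delta_\prec(I)_{\leq d}$ so that $\mathcal{N}_{\prec,r}=\mathcal{M}_{\prec,d,r}$ and ${\rm fp}_r(\mathcal{L}_X)={\rm fp}_I(d,r)$, and apply Theorem~\ref{rth-footprint-lower-bound}. The only cosmetic quibble is that the passage to initial terms of elements of $\mathcal{L}^*$ lands in $\Delta_\prec(I)_{\leq d}$ for any monomial order (the initial monomial of a linear combination of monomials is always one of them); gradedness is needed only for the equality $\mathcal{L}_X=C_X(d)$, which you in fact also note.
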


\begin{proof} Let $\mathcal{L}=K\Delta_\prec(I)_{\leq d}$ be the
$K$-vector space generated by the set 
$\Delta_{\prec}(I)_{\leq d}$ of all standard monomials of $S/I$ of degree at
most $d$. As $\prec$ is a graded monomial order, by the division
algorithm \cite[Theorem~3, p.~63]{CLO}, one has: 
$$
S_{\leq d}=(I\cap S_{\leq d})+K\Delta_{\prec}(I)_{\leq d}\ \mbox{ and }\
\mathcal{L}_X={\rm ev}(K\Delta_{\prec}(I)_{\leq d})=
{\rm ev}(S_{\leq d})=C_X(d),
$$
that is, $C_X(d)$ is the standard evaluation code $\mathcal{L}_X$ on
$X$. Hence, the inequality follows directly from
Theorem~\ref{rth-footprint-lower-bound} 
by noticing the following. The set ${\rm in}_\prec(\mathcal{L}^*)$ 
of initial terms of $\mathcal{L}^*$ is equal to $\Delta_\prec(I)_{\leq d}$,
$\mathcal{N}_{\prec,r}$ is equal to $\mathcal{M}_{\prec,d,r}$, and
$\dim_K(\mathcal{L}_X)$ is equal to $|\Delta_\prec(I)_{\leq
d}|=H_I^a(d)$.
\end{proof}

\begin{remark}\label{sarabia-vila} Let $I=I(X)$ be the vanishing ideal of a
subset $X$ of $\mathbb{A}^s$. The
following hold. 
\begin{itemize}
\item[(a)] $r\leq\delta_r(C_X(d))\leq |X|$ 
for $d\geq 1$ and $1\leq r\leq H_I^a(d)$. This follows 
from the fact that the weight hierarchy is an
increasing sequence (see \cite[Theorem~1]{wei}).
\item[(b)] If $d\geq{\rm reg}(H_I^a)$, then
$C_X(d)=K^{|X|}$ and $\delta_r(C_X(d))=r$ for
$1\leq r\leq |X|$.
\item[(c)] If $C_X(d)$ is non-degenerate, i.e., for each
$1\leq i\leq |X|$ there is $\alpha\in C_X(d)$ whose
$i$-th entry is non-zero, then
$\delta_r(C_X(d))=|X|$ when $r=H_I^a(d)$. This follows from
Lemma~\ref{vila-delio-feb27-20} and 
Corollary~\ref{rth-GHW-affine} noticing that $V_X(F)=\emptyset$ for
$F\in\mathcal{F}_{\prec,d,r}$.
\end{itemize}
\end{remark}

\subsection*{Hilbert's Nullstellensatz over finite fields}
The next result is well-known, see \cite{Ghorpade} for an expository account of 
Nullstellensatz-type results like this. For
convenience we give a short proof of this result using our degree driven approach. 
We show examples of how to use this result to estimate the basic parameters
of evaluation codes over affine varieties defined by a given set of
polynomials of $S$ (Examples~\ref{Hermitian-curve-example} and 
\ref{elliptic-curve-example}). 

\begin{proposition}\label{Nullstellensatz-finite} Let $K$ be a finite
field $\mathbb{F}_q$, let $\mathbb{A}^s=K^s$ be the affine space
over $K$, and let $G$ be a finite set of polynomials of $S$. If
$X=V_{\mathbb{A}^s}(G)$ and
$(I(\mathbb{A}^s)\colon(G))\neq I(\mathbb{A}^s)$, then
$$
(I(\mathbb{A}^s),G)=(t_1^q-t_1,\ldots,t_s^q-t_s,G)=I(X).
$$
\end{proposition}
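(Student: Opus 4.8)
The plan is to prove the two displayed equalities in turn. The first, $(I(\mathbb{A}^s),G)=(t_1^q-t_1,\ldots,t_s^q-t_s,G)$, reduces to the identity $I(\mathbb{A}^s)=(t_1^q-t_1,\ldots,t_s^q-t_s)$. For this, the inclusion ``$\supseteq$'' is clear since $a^q=a$ for every $a\in\mathbb{F}_q$, so each $t_j^q-t_j$ vanishes on all of $\mathbb{A}^s=\mathbb{F}_q^s$. For the reverse inclusion I would count $K$-dimensions: it is standard that the monomials $t^a$ with $0\le a_j\le q-1$ form a $K$-basis of $S/(t_1^q-t_1,\ldots,t_s^q-t_s)$, so that quotient has dimension $q^s$, while the evaluation map gives $S/I(\mathbb{A}^s)\cong K^{q^s}$; a containment of ideals inducing a surjection between $K$-vector spaces of equal finite dimension must be an equality. (Alternatively this follows from Seidenberg's lemma, Lemma~\ref{seidenberg-lemma}, together with Lemma~\ref{primdec-ixx}.)

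For the second equality, set $J:=(I(\mathbb{A}^s),G)$. The inclusion $J\subseteq I(X)$ is immediate, because elements of $I(\mathbb{A}^s)$ vanish everywhere and the elements of $G$ vanish on $X=V_{\mathbb{A}^s}(G)$ by definition. For the reverse inclusion I would again argue by a dimension count. Since $\mathbb{A}^s=\mathbb{F}_q^s$ is a finite subset of $\mathbb{A}^s$, Theorem~\ref{affine-zeros-formula}, applied with this finite point set and the finite polynomial set $G$, gives
$$
\dim_K\bigl(S/(I(\mathbb{A}^s),G)\bigr)=|V_{\mathbb{A}^s}(G)|=|X|.
$$
On the other hand the evaluation map on $X$ gives $S/I(X)\cong K^{|X|}$, hence $\dim_K(S/I(X))=|X|$. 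Therefore the natural surjection $S/J\twoheadrightarrow S/I(X)$ coming from $J\subseteq I(X)$ is a surjection of $K$-vector spaces of the same finite dimension, so it is an isomorphism and $J=I(X)$. Here the hypothesis $(I(\mathbb{A}^s)\colon(G))\neq I(\mathbb{A}^s)$ ensures, via Lemma~\ref{vila-delio-feb27-20}, that $X$ is non-empty and $J\subsetneq S$, so that the nontrivial case of Lemma~\ref{degree-formula-zeros-affine} (which underlies Theorem~\ref{affine-zeros-formula}) is the one in play.

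I do not expect a genuine obstacle here: the statement is essentially a bookkeeping consequence of the results of Section~\ref{degree-variety-section}. The only points meriting a little care are (i) recognizing that $\mathbb{A}^s$ itself is a legitimate instance of the ``finite subset of $\mathbb{A}^s$'' appearing in Theorem~\ref{affine-zeros-formula}, and (ii) the elementary fact that a surjection of finite-dimensional $K$-vector spaces of equal dimension is bijective. If one prefers to avoid the dimension count, an equally short alternative is to note that $J$ is a proper radical ideal of dimension zero (Proposition~\ref{feb28-20}), hence equals the intersection of the maximal ideals containing it; a maximal ideal contains $I(\mathbb{A}^s)$ precisely when it equals $I_P$ for some $P\in\mathbb{F}_q^s$, and it contains $G$ in addition precisely when $P\in V_{\mathbb{A}^s}(G)=X$, so $J=\bigcap_{P\in X}I_P=I(X)$ by Lemma~\ref{primdec-ixx}.
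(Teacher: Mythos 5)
Your proof is correct and follows essentially the same route as the paper: both establish $J:=(I(\mathbb{A}^s),G)\subseteq I(X)$, compute $|X|=|V_{\mathbb{A}^s}(G)|$ as the degree (equivalently, in this zero-dimensional setting, the $K$-dimension) of $S/J$ via Lemma~\ref{degree-formula-zeros-affine}/Theorem~\ref{affine-zeros-formula}, and conclude equality from the inclusion together with the equality of sizes. The only differences are cosmetic: you close with the elementary observation that a surjection of finite-dimensional $K$-vector spaces of equal dimension is bijective, whereas the paper invokes additivity of the degree plus Seidenberg's lemma (which is precisely the alternative argument you sketch in your last paragraph), and you prove the identity $I(\mathbb{A}^s)=(t_1^q-t_1,\ldots,t_s^q-t_s)$ that the paper simply cites.
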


\begin{proof} As $I(\mathbb{A}^s)=(t_1^q-t_1,\ldots,t_s^q-t_s)$
\cite[p.~137]{JacI}, we need only show $(I(\mathbb{A}^s),G)=I(X)$. The
ideal $J:=(I(\mathbb{A}^s),G)$ is contained in $I(X)$ because
$G\subset I(V_{\mathbb{A}^s}(G))$. By
Lemma~\ref{degree-formula-zeros-affine} and
Proposition~\ref{behavior-hilbert-function}, we get 
$\deg(S/J)=|V_{\mathbb{A}^s}(G)|$ and $\deg(S/I(X)=|X|$, respectively.
Thus, $S/J$ and $S/I(X)$ have the same degree. Therefore, using 
the inclusion $J\subset I(X)$, by
additivity of the degree and Seidenberg's lemma the equality follows. 
\end{proof}

As a byproduct we obtain the next projective version over finite
fields of Hilbert's projective Nullstellensatz over algebraically closed
fields \cite[Theorem~A.4.6, p.~476]{singular}. This result 
can be used to estimate the parameters 
of evaluation codes over projective varieties defined by a
given set of homogeneous polynomials of $S$ 
\cite{min-dis-generalized,rth-footprint} (Example~\ref{Cox-example}). 

\begin{theorem}\label{Nullstellensatz-finite-projective} Let
$\mathbb{P}^{s-1}$ be a projective space over a finite field
$K=\mathbb{F}_q$, let $\mathbb{X}=V_{\mathbb{P}^{s-1}}(G)$ be the projective
variety defined by a finite set $G$ of homogeneous polynomials of
$S\setminus\{0\}$, and let $I=I(\mathbb{X})$ be its homogeneous vanishing ideal. If 
and $(I(\mathbb{P}^{s-1})\colon(G))\neq I(\mathbb{P}^{s-1})$, then
$$
{\rm rad}(I(\mathbb{P}^{s-1}),G)=
{\rm rad}(\{t_it_j^q-t_i^qt_j\vert\,1\leq i<j\leq s\},G)=I(\mathbb{X}).
$$
\end{theorem}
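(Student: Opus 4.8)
The plan is to reduce the asserted identity to a computation carried out one projective point at a time. The first equality is immediate, since $I(\mathbb{P}^{s-1})=(\{t_it_j^q-t_i^qt_j\mid 1\le i<j\le s\})$ is well known (see, e.g., \cite{min-dis-generalized,rth-footprint}); so it suffices to prove ${\rm rad}(J)=I(\mathbb{X})$, where $J:=(I(\mathbb{P}^{s-1}),G)$ and $\mathbb{X}=V_{\mathbb{P}^{s-1}}(G)$. First I would observe that the hypothesis $(I(\mathbb{P}^{s-1})\colon(G))\neq I(\mathbb{P}^{s-1})$ forces $\mathbb{X}\neq\emptyset$ (the projective analogue of the equivalence (a)$\Leftrightarrow$(b) of Lemma~\ref{vila-delio-feb27-20}, proved the same way from the prime decomposition $I(\mathbb{P}^{s-1})=\bigcap_{[Q]\in\mathbb{P}^{s-1}}I([Q])$, the projective analogue of Lemma~\ref{primdec-ixx}, together with the primality of each $I([Q])$). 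Once $\mathbb{X}\neq\emptyset$, no element of $G$ can be a nonzero constant, so every element of $G$ has positive degree.

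Next I would record the elementary ring-theoretic identity
$$
{\rm rad}\Big(A+\textstyle\bigcap_{i=1}^{n}B_i\Big)=\bigcap_{i=1}^{n}{\rm rad}(A+B_i),
$$
valid for ideals $A,B_1,\ldots,B_n$ of any commutative ring: ``$\subseteq$'' is clear, and for ``$\supseteq$'' one writes $x^{m_i}=a_i+b_i$ with $a_i\in A$ and $b_i\in B_i$, multiplies these $n$ relations, and notes that every term of the resulting product other than $\prod_i b_i$ lies in $A$, while $\prod_i b_i\in\bigcap_i B_i$. Applying this with $A=(G)$ and $\{B_i\}$ the finite family $\{I([Q])\mid[Q]\in\mathbb{P}^{s-1}\}$, and using the decomposition of $I(\mathbb{P}^{s-1})$ recalled above, gives ${\rm rad}(J)=\bigcap_{[Q]\in\mathbb{P}^{s-1}}{\rm rad}\big((G)+I([Q])\big)$.

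The core of the proof is then the local computation of ${\rm rad}((G)+I([Q]))$. If $[Q]\in\mathbb{X}$, then $G\subset I([Q])$, so $(G)+I([Q])=I([Q])$ is prime and equals its own radical. If $[Q]\notin\mathbb{X}$, I would use that $I([Q])$ is generated by $s-1$ independent linear forms and that $S/I([Q])$ is isomorphic to a univariate polynomial ring $K[u]$, under an isomorphism sending a homogeneous polynomial $g$ of degree $d$ to $g(Q)\,u^{d}$, where $Q$ is the distinguished representative of $[Q]$. Since $g(Q)\neq 0$ for at least one $g\in G$ and every element of $G$ has positive degree, the image of $(G)+I([Q])$ in $K[u]$ is contained in $(u)$ and contains a power of $u$, hence has radical $(u)$; pulling back along $S\to S/I([Q])\cong K[u]$ gives ${\rm rad}((G)+I([Q]))=\mathfrak{m}$, with $\mathfrak{m}=(t_1,\ldots,t_s)$. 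Combining the two cases,
$$
{\rm rad}(J)=\Big(\bigcap_{[Q]\in\mathbb{X}}I([Q])\Big)\cap\mathfrak{m}=\bigcap_{[Q]\in\mathbb{X}}I([Q])=I(\mathbb{X}),
$$
where the middle equality uses $\mathbb{X}\neq\emptyset$ (so that $\bigcap_{[Q]\in\mathbb{X}}I([Q])\subset I([Q_0])\subset\mathfrak{m}$ for any fixed $[Q_0]\in\mathbb{X}$; the factor $\mathfrak{m}$ is simply absent when $\mathbb{X}=\mathbb{P}^{s-1}$) and the last equality is again the projective analogue of Lemma~\ref{primdec-ixx}.

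The step I expect to require the most care is this local computation: one must handle points whose first nonzero coordinate is not $t_1$ (merely a relabeling of the variables) and check that the homogeneity of the elements of $G$ places the image ideal inside $(u)$ rather than onto all of $K[u]$ — it is precisely here that the hypothesis, through $\mathbb{X}\neq\emptyset$, rules out constants in $G$. Everything else is formal. One can alternatively derive the theorem from the affine Nullstellensatz (Proposition~\ref{Nullstellensatz-finite}) applied to the affine cone $V_{\mathbb{A}^s}(G)$ over $\mathbb{X}$, by passing to homogeneous parts of $(I(\mathbb{A}^s),G)=I(V_{\mathbb{A}^s}(G))$, but the argument sketched above stays within the projective setting and is more direct.
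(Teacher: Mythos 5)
Your proof is correct, but it takes a genuinely different route from the one in the paper. The paper's argument is degree-driven: it first notes $J:=(I(\mathbb{P}^{s-1}),G)\subset I(\mathbb{X})$, then invokes the projective degree formula $|V_{\mathbb{P}^{s-1}}(G)|=\deg(S/J)$ from \cite[Lemma~3.4]{rth-footprint} together with Proposition~\ref{behavior-hilbert-function} to get $\deg(S/J)=|\mathbb{X}|=\deg(S/I(\mathbb{X}))$, and finally compares the primary decomposition of $J$ (whose height-$(s-1)$ components lie among the $I([P_i])$, plus a possible $\mathfrak{m}$-primary component) with that of $I(\mathbb{X})$ via the additivity of the degree (Proposition~\ref{additivity-of-the-degree}) to force ${\rm rad}(J)=I(\mathbb{X})$. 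You instead localize at each point: the identity ${\rm rad}(A+\bigcap_iB_i)=\bigcap_i{\rm rad}(A+B_i)$ applied to $I(\mathbb{P}^{s-1})=\bigcap_{[Q]}I([Q])$ reduces everything to computing ${\rm rad}((G)+I([Q]))$, which is $I([Q])$ for $[Q]\in\mathbb{X}$ and the irrelevant ideal $\mathfrak{m}$ otherwise (via $S/I([Q])\simeq K[u]$, $g\mapsto g(Q)u^{\deg g}$), and the $\mathfrak{m}$ factors are absorbed once $\mathbb{X}\neq\emptyset$ — which you correctly extract from the colon-ideal hypothesis. Your argument is self-contained and elementary (it needs only the projective analogue of Lemma~\ref{primdec-ixx} and no degree machinery or external reference), and it makes explicit where the irrelevant ideal arises and why it disappears; the paper's argument is terser and, as a by-product, records the equality $\deg(S/(I(\mathbb{P}^{s-1}),G))=|\mathbb{X}|$, which is itself useful for computing code lengths and fits the degree-centric theme of the paper. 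All the individual steps you use (the radical identity, the description of $I([Q])$ as generated by $s-1$ linear forms, the exclusion of nonzero constants from $G$ when $\mathbb{X}\neq\emptyset$) check out.
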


\begin{proof} As $I(\mathbb{P}^{s-1})=\{t_it_j^q-t_i^qt_j\vert\,1\leq
i<j\leq s\})$
\cite[Theorem~2.1]{mercier-rolland}, we need only show
that ${\rm rad}(I(\mathbb{P}^{s-1}),G)$ is equal to $I(\mathbb{X})$. The
ideal $J:=(I(\mathbb{P}^{s-1}),G)$ is a subset of $I(\mathbb{X})$ because
$G\subset I(V_{\mathbb{P}^{s-1}}(G))$. Thus, ${\rm rad}(J)$ 
is a subset of $I(\mathbb{X})$ because $I(\mathbb{X})$ is a radical
ideal. Using \cite[Lemma~3.4]{rth-footprint} and
Proposition~\ref{behavior-hilbert-function}, we get  
$$
|\mathbb{X}|=|V_{\mathbb{P}^{s-1}}(G)|=
\deg(S/(I(\mathbb{P}^{s-1}),G))=\deg(S/J)
$$
and  $\deg(S/I(\mathbb{X})=|\mathbb{X}|$, respectively.
Thus, $S/J$ and $S/I(\mathbb{X})$ have the same degree. Let
$[P_1],\ldots,[P_m]$ be the points of $\mathbb{X}$ and for each $i$ let
$\mathfrak{p}_i$ be the homogeneous vanishing ideal of the point $[P_i]$. 
Then, $I(\mathbb{X})=\bigcap_{i=1}^m\mathfrak{p}_i$. As $J$ and
$I(\mathbb{X})$ have height $s-1$ and $J\subset I(\mathbb{X})$, the
ideal $J$ has an irredundant primary decomposition of the form 
$J=\mathfrak{q}_1\bigcap\cdots\bigcap\mathfrak{q}_n\cap\mathfrak{q}$
such that $n\geq m$, $\mathfrak{q}_i$ is $\mathfrak{p}_i$-primary of
height $s-1$ for $i=1,\ldots,n$, and ${\rm
rad}(\mathfrak{q})=(t_1,\ldots,t_s)$. Hence, by additivity of the
degree Proposition~\ref{additivity-of-the-degree}, one has  
$$
|\mathbb{X}|=m=\deg(S/I(\mathbb{X}))=
\deg(S/J)=\sum_{i=1}^n\deg(S/\mathfrak{q}_i)\geq n.
$$
\quad Therefore, $n=m$ and consequently ${\rm rad}(J)=I(\mathbb{X})$.
\end{proof}

\section{Minimum distance of toric
codes}\label{toric-codes-section}
To avoid repetitions, we continue to employ
the notations and 
definitions used in Section~\ref{intro-section}.
In this section we determine the minimum distance of 
the toric code $\mathcal{C}_\mathcal{P}(d)$. Throughout this section
we assume that $K$ is a finite field $\mathbb{F}_q$.

\begin{lemma}{\rm(cf. \cite[Lemma~3.3]{min-dis-ci})}\label{apr5-19}
Let $d_1,\ldots,d_s$ be positive integers and let $L$ be the ideal of $S$ generated by
$t_1^{d_1},\ldots,t_{s}^{d_s}$. If $t^a=t_1^{a_1}\cdots t_s^{a_s}$ is
not in $L$, then
$$
\deg(S/(L,t^a))=d_1\cdots d_s-(d_1-a_1)\cdots(d_s-a_{s}).
$$
\end{lemma}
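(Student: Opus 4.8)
The plan is to reduce the statement to an elementary count of monomials. Since $(L,t^a)$ is a monomial ideal containing $t_1^{d_1},\dots,t_s^{d_s}$, the quotient $S/(L,t^a)$ has Krull dimension $0$, so by the definition of degree recalled in Section~\ref{degree-variety-section} one has $\deg(S/(L,t^a))=\dim_K(S/(L,t^a))$. By Macaulay's theorem \cite[Chapter~5]{CLO}, this dimension is the number of standard monomials of $S/(L,t^a)$, and for a monomial ideal the initial ideal (under any monomial order) is the ideal itself, so the standard monomials are exactly the monomials lying in none of the generators.

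First I would record the only nontrivial point hidden in the hypothesis: $t^a\notin L$ means precisely that $a_i<d_i$ for every $i$, since $t^a\in L$ would force $t_i^{d_i}\mid t^a$ for some $i$, i.e. $a_i\ge d_i$. Then a monomial $t^b$ is standard for $S/(L,t^a)$ if and only if $b_i<d_i$ for all $i$ (so that $t^b$ avoids $L$) and $t^a\nmid t^b$, i.e. it is not the case that $b_i\ge a_i$ for all $i$. The monomials with $b_i<d_i$ for all $i$ number $d_1\cdots d_s$; those among them with $a_i\le b_i<d_i$ for all $i$ (the ones divisible by $t^a$) number $\prod_{i=1}^s(d_i-a_i)$, each factor being a positive integer because $a_i<d_i$. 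Subtracting the second count from the first yields
$$\dim_K(S/(L,t^a))=d_1\cdots d_s-(d_1-a_1)\cdots(d_s-a_s),$$
which is the desired formula.

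As an alternative (or as a cross-check) one could instead use the short exact sequence $0\to S/(L:t^a)\xrightarrow{\,\cdot t^a\,}S/L\to S/(L,t^a)\to 0$ of $K$-vector spaces, together with the elementary computation $(L:t^a)=(t_1^{d_1-a_1},\dots,t_s^{d_s-a_s})$ for this monomial ideal, which gives $\dim_K(S/L)=d_1\cdots d_s$ and $\dim_K(S/(L:t^a))=(d_1-a_1)\cdots(d_s-a_s)$. Either way there is no serious obstacle: the whole content is the dimension-zero reduction, which turns $\deg$ into $\dim_K$, together with the lattice-point count, and the one point that needs care is the translation of $t^a\notin L$ into $a_i<d_i$ for all $i$, which is exactly what keeps every factor $d_i-a_i$ positive and the count valid.
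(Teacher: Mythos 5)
Your argument is correct, and your primary route differs from the paper's. The paper computes the colon ideal $(L\colon t^a)=(t_1^{d_1-a_1},\ldots,t_s^{d_s-a_s})$, notes that $L$ and $(L\colon t^a)$ are complete intersections so their degrees are $\prod_i d_i$ and $\prod_i(d_i-a_i)$, and then extracts the formula from the affine Hilbert functions in the short exact sequence $0\to S/(L\colon t^a)[-e]\xrightarrow{\,t^a\,}S/L\to S/(L,t^a)\to 0$ --- exactly the ``alternative'' you sketch at the end. Your main proof instead reduces $\deg$ to $\dim_K$ via the dimension-zero observation and counts standard monomials of the monomial ideal $(L,t^a)$ directly by inclusion--exclusion on the box $\prod_i\{0,\ldots,d_i-1\}$. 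Both hinge on the same key translation, namely that $t^a\notin L$ is equivalent to $a_i<d_i$ for all $i$, which you rightly isolate as the one point needing care (it guarantees every factor $d_i-a_i$ is positive and that the sub-box of multiples of $t^a$ is counted correctly). What the direct count buys is complete elementarity --- no homological input, no appeal to complete-intersection degree formulas; what the exact-sequence argument buys is that it packages the subtraction structurally and adapts to situations where one cannot simply enumerate a basis. One cosmetic slip: standard monomials are those \emph{divisible by} none of the generators, not ``lying in'' none of them; your actual count uses the correct divisibility criterion, so nothing is affected.
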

\begin{proof} The colon ideal $(L\colon t^a)$ is equal to
$(t_1^{d_1-a_1},\ldots,t_s^{d_s-a_s})$. Since $L$ and $(L\colon t^a)$
are complete intersections, one has $\deg(S/L)=\prod_{i=1}^sd_i$ and
$\deg(S/(L\colon t^a))=\prod_{i=1}^s(d_i-a_i)$. Taking affine Hilbert
functions in the exact sequence
$$
0\longrightarrow S/(L\colon t^a)[-e]\stackrel{t^a}{\longrightarrow}
S/L\longrightarrow S/(L,t^a)\longrightarrow 0,\quad e=\deg(t^a),
$$
we obtain $\deg(S/(L,t^a))=\deg(S/L)-\deg(S/(L\colon t^a))$. 
\end{proof}

A polynomial is called \textit{squarefree} if all its 
monomials are squarefree. The set of all squarefree monomials of $S$
of degree $d$ (resp. degree at most $d$) is denoted by $V_d$ 
(resp. $V_{\leq d}$). 

\begin{lemma}\label{toric-squarefree-standard}  
Let $\prec$ be a monomial order on $S$ and let 
$I=I(T)$ be the vanishing ideal of an affine torus
$T=(\mathbb{F}_q^*)^{s}$ over a finite field with $q\geq 3$ elements.
The following hold. 
\begin{enumerate}
\item[\rm(a)] The initial ideal ${\rm in}_\prec(I)$ of $I$ is generated by
$\{t_1^{q-1},\ldots,t_s^{q-1}\}$. In particular if $f$ is a
squarefree polynomial 
of $S$, then  $f$ is a standard polynomial of $S/I$. 
\item[\rm(b)] If $\mathcal{L}=KV_d$ or $\mathcal{L}=KV_{\leq d}$, then the
evaluation code $\mathcal{L}_T$ on $T$ is a standard evaluation code
on $T$ relative to $\prec$. 
\item[\rm(c)] The toric code $\mathcal{C}_\mathcal{P}(d)$ over the hypersimplex
$\mathcal{P}$ and the squarefree evaluation code $\mathcal{C}_{\leq
d}$ are standard evaluation codes on $T$ relative to $\prec$. 
\end{enumerate}
\end{lemma}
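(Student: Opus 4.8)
First I would establish part (a). The vanishing ideal $I=I(T)$ of the affine torus $T=(\mathbb{F}_q^*)^s$ contains the univariate polynomials $t_i^{q-1}-1$ for $i=1,\dots,s$, since every coordinate of a point of $T$ is a nonzero element of $\mathbb{F}_q$ and hence a root of $t^{q-1}-1$. Thus $t_i^{q-1}={\rm in}_\prec(t_i^{q-1}-1)\in{\rm in}_\prec(I)$ for every monomial order $\prec$, giving the inclusion $(t_1^{q-1},\dots,t_s^{q-1})\subseteq{\rm in}_\prec(I)$. For the reverse inclusion I would compare degrees: by Proposition~\ref{behavior-hilbert-function} one has $\deg(S/I)=|T|=(q-1)^s$, while $L:=(t_1^{q-1},\dots,t_s^{q-1})$ is a complete intersection with $\deg(S/L)=(q-1)^s$ (this is the $d_i=q-1$, $a_i=0$ case of Lemma~\ref{apr5-19}, or simply that $S/L$ has the $(q-1)^s$ standard monomials $t^a$ with $0\le a_i\le q-2$). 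Since $L\subseteq{\rm in}_\prec(I)$ and both quotients are zero-dimensional with the same degree (equivalently the same $K$-dimension), we get $L={\rm in}_\prec(I)$. Consequently $\Delta_\prec(I)$ is exactly the set of monomials $t^a$ with $0\le a_i\le q-2$ for all $i$; as $q\ge 3$ this set contains every squarefree monomial, so every squarefree polynomial lies in $K\Delta_\prec(I)$ and is a standard polynomial of $S/I$.

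Next, part (b). Recall a monomial $t^a$ is squarefree of degree $d$ precisely when $a\in\{0,1\}^s$ with $\sum a_i=d$, so $V_d$ consists of squarefree monomials, and likewise $V_{\le d}$. By part (a), $V_d\subseteq\Delta_\prec(I)$ and $V_{\le d}\subseteq\Delta_\prec(I)$, hence $\mathcal{L}=KV_d$ (or $KV_{\le d}$) is a $K$-linear subspace of $K\Delta_\prec(I)$. By the very definition given in Section~\ref{intro-section}, the evaluation code $\mathcal{L}_T$ is then a standard evaluation code on $T$ relative to $\prec$.

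Finally, part (c) is immediate from (b) together with the definitions recalled in the introduction: $\mathcal{C}_\mathcal{P}(d)$ is by definition the image under ${\rm ev}_d$ of $KV_d$ (the $K$-span of the squarefree monomials of degree $d$, which by the text is exactly $\{t^a:a\in\mathcal{P}\cap\mathbb{Z}^s\}$), and $\mathcal{C}_{\le d}$ is the image of $KV_{\le d}$; applying part (b) with $\mathcal{L}=KV_d$ and with $\mathcal{L}=KV_{\le d}$ respectively shows each is a standard evaluation code on $T$ relative to $\prec$. I do not anticipate a serious obstacle here; the only point requiring care is the degree comparison in part (a) that upgrades the containment $L\subseteq{\rm in}_\prec(I)$ to an equality, and that is handled cleanly by the fact that both sides are zero-dimensional of degree $(q-1)^s$ (or, equivalently, that $S/L$ already has $K$-dimension $|T|$, so no further monomials can be forced into the initial ideal).
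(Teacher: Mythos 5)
Your proof is correct and follows the same overall structure as the paper's: establish that ${\rm in}_\prec(I)=(t_1^{q-1},\ldots,t_s^{q-1})$, note that for $q\ge 3$ every squarefree monomial lies outside this ideal so every squarefree polynomial is standard, and then deduce (b) and (c) directly from the definitions. The one place you genuinely diverge is in justifying the computation of the initial ideal: the paper simply cites references for the fact that $\{t_i^{q-1}-1\}_{i=1}^{s}$ is a Gr\"obner basis of $I(T)$, whereas you prove it from scratch by combining the evident containment $(t_1^{q-1},\ldots,t_s^{q-1})\subseteq{\rm in}_\prec(I)$ with the count
$\dim_K\bigl(S/(t_1^{q-1},\ldots,t_s^{q-1})\bigr)=(q-1)^s=|T|=\deg(S/I)=\dim_K(S/I)$
(Proposition~\ref{behavior-hilbert-function} and Macaulay's theorem), so that the two monomial ideals have the same finite set of standard monomials and must coincide. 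This dimension-count argument is valid, is independent of the chosen monomial order, and makes the lemma self-contained at the cost of a few extra lines; it is the standard way to certify a Gr\"obner basis for the vanishing ideal of a Cartesian product of point sets. Everything else in your write-up matches the paper's proof.
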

\begin{proof} (a): The ideal $I$ is generated by the set 
$\mathcal{B}=\{t_i^{q-1}-1\}_{i=1}^{s}$ and this set is a Gr\"obner
basis of $I$ (see \cite{GRH} and \cite[Lemma~2.3]{cartesian-codes} ).
Then, the initial ideal $L:={\rm in}_\prec(I)$ of $I$ is
generated by the set of monomials $\{t_i^{q-1}\}_{i=1}^{s}$. As
$q\geq 3$ and $f$ is squarefree, none of the terms of $f$ can be in
$L$. Thus, $f$ is a standard polynomial.

(b): By part (a), $V_d$ and $V_{\leq d}$ are subset of $K\Delta_\prec(I)$. 
Thus, $\mathcal{L}$ is a linear subspace of $K\Delta_\prec(I)$ and
$\mathcal{L}_T$ is a standard evaluation code.

(c): This follows from part (b) by recalling that $\mathcal{C}_\mathcal{P}(d)$ and 
$\mathcal{C}_{\leq d}$ are the images of $KV_d$ and $KV_{\leq d}$
under the evaluation map, respectively. 
\end{proof}

To prove the next proposition we use the results of
Section~\ref{degree-variety-section}.   
\begin{proposition}\label{footprint-technique} If $0\neq f\in
KV_d$, $q\geq 3$ and 
$1\leq d<s$, then 
$$
|V_T(f)|\leq(q-1)^{s}-(q-2)^d(q-1)^{s-d}.
$$
\end{proposition}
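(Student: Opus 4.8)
The plan is to reduce the estimate to a footprint computation via the results of Section~\ref{degree-variety-section}. Write $I=I(T)$ for the vanishing ideal of the affine torus $T=(\mathbb{F}_q^*)^s$, so that $\deg(S/I)=|T|=(q-1)^s$ by Lemma~\ref{primdec-ixx} (or Proposition~\ref{behavior-hilbert-function}). First I would dispose of the trivial case: if $V_T(f)=\emptyset$, then $|V_T(f)|=0$, and the right-hand side $(q-1)^s-(q-2)^d(q-1)^{s-d}$ is nonnegative because $q\geq 3$ forces $(q-2)^d(q-1)^{s-d}\leq(q-1)^s$; hence the inequality holds. So we may assume $V_T(f)\neq\emptyset$, which by Lemma~\ref{vila-delio-feb27-20} is equivalent to $(I\colon(f))\neq I$.

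Next I would apply Theorem~\ref{degree-initial-footprint-affine} with $F=\{f\}$: since $(I\colon(f))\neq I$, it gives
$$
|V_T(f)|=\deg(S/(I,f))\leq\deg(S/({\rm in}_\prec(I),{\rm in}_\prec(f))).
$$
By Lemma~\ref{toric-squarefree-standard}(a), the initial ideal $L:={\rm in}_\prec(I)$ is generated by $t_1^{q-1},\ldots,t_s^{q-1}$. Moreover $f\in KV_d$ is a nonzero $K$-linear combination of squarefree monomials of degree $d$, so its initial monomial ${\rm in}_\prec(f)=t^a$ is itself a squarefree monomial of degree $d$; that is, $a\in\{0,1\}^s$ with $a_1+\cdots+a_s=d$. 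Since $q-1\geq 2$, this $t^a$ is not a multiple of any $t_i^{q-1}$, hence $t^a\notin L$.

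Finally I would invoke Lemma~\ref{apr5-19} with $d_1=\cdots=d_s=q-1$ applied to $t^a$: as exactly $d$ of the exponents $a_i$ equal $1$ and the remaining $s-d$ equal $0$,
$$
\deg(S/(L,t^a))=(q-1)^s-\prod_{i=1}^s(q-1-a_i)=(q-1)^s-(q-2)^d(q-1)^{s-d}.
$$
Combining this with the displayed inequality above yields $|V_T(f)|\leq(q-1)^s-(q-2)^d(q-1)^{s-d}$. The only point requiring a little care is that one must \emph{not} assume ${\rm in}_\prec(f)$ is a specific monomial such as $t_1\cdots t_d$, since the monomial order $\prec$ is fixed in advance; but Lemma~\ref{apr5-19} is stated for an arbitrary monomial $t^a\notin L$, so no relabeling is needed and the value of $\deg(S/(L,t^a))$ depends only on the support size $d$. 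There is no serious obstacle here: all the content has been front-loaded into Theorem~\ref{degree-initial-footprint-affine}, Lemma~\ref{toric-squarefree-standard}, and Lemma~\ref{apr5-19}.
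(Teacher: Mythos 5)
Your proof is correct and follows essentially the same route as the paper's: reduce to $\deg(S/(I,f))$, pass to the initial ideal $L=(t_1^{q-1},\ldots,t_s^{q-1})$ via Theorem~\ref{degree-initial-footprint-affine}, observe that ${\rm in}_\prec(f)$ is a squarefree monomial of degree $d$ not lying in $L$, and evaluate $\deg(S/(L,t^a))$ with Lemma~\ref{apr5-19}. Your explicit handling of the case $V_T(f)=\emptyset$ (so that the hypothesis $(I\colon(f))\neq I$ of Theorem~\ref{degree-initial-footprint-affine} is genuinely satisfied in the remaining case) is a small point the paper glosses over, but otherwise the two arguments coincide.
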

\begin{proof} Let $\prec$ be a monomial order on $S$ and let $I=I(T)$
be the vanishing ideal of the affine torus 
$T=(\mathbb{F}_q^*)^{s}$. 
By Theorem~\ref{affine-zeros-formula}, one has
\begin{equation}\label{apr17-1}
|V_T(f)|=\deg(S/(I,f)).
\end{equation}
\quad The initial ideal $L:={\rm
in}_\prec(I)$ of $I$ is
generated by the set $\{t_i^{q-1}\}_{i=1}^{s}$
(Lemma~\ref{toric-squarefree-standard}). Let
$t^a={\rm in}_\prec(f)=t_1^{a_1}\cdots 
t_s^{a_s}$ be the initial monomial of $f$.  Since $f$ is squarefree, so is
$t^a$. As $q\geq 3$, $t^a$ cannot be in $L$. Therefore, by
Theorem~\ref{degree-initial-footprint-affine} and 
Lemma~\ref{apr5-19}, we get  
\begin{equation}\label{apr17-2}
\deg(S/(I,f))\leq(q-1)^{s}-(q-2)^{d}(q-1)^{s-d},
\end{equation}
where $d=\deg(f)$. Thus, the inequality follows at once from
Eq.~(\ref{apr17-1}).
\end{proof}

\begin{proposition}\label{may6-19} 
Let $\mathcal{C}_\mathcal{P}(d)$ be the toric code of $\mathcal{P}$
of degree $d$. Then, the length of $\mathcal{C}_\mathcal{P}(d)$ is
equal to $(q-1)^s$ and its dimension is given by
$$
\dim_K(\mathcal{C}_\mathcal{P}(d))=\begin{cases}
\binom{s}{d}&\mbox{ if }q\geq 3,\\
\ 1&\mbox{ if }q=2.
\end{cases}
$$
\end{proposition}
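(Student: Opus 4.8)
The plan is to establish the two assertions—length and dimension—separately, splitting the dimension computation according to whether $q\geq 3$ or $q=2$.

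\textbf{Length.} By construction the length of $\mathcal{C}_\mathcal{P}(d)$ is $m=|T|$, where $T=(\mathbb{F}_q^*)^s$ is the affine torus; since $|\mathbb{F}_q^*|=q-1$, this is $(q-1)^s$, and the first claim is immediate.

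\textbf{Dimension for $q\geq 3$.} Recall that $\mathcal{C}_\mathcal{P}(d)={\rm ev}_d(KV_d)$, where $V_d$ is the set of squarefree monomials of $S$ of degree $d$. By Lemma~\ref{toric-squarefree-standard}(c), $\mathcal{C}_\mathcal{P}(d)$ is a standard evaluation code on $T$ relative to a (any) monomial order $\prec$, that is, $\mathcal{L}:=KV_d$ is a linear subspace of $K\Delta_\prec(I(T))$. Hence Lemma~\ref{apr26-20} gives $\mathcal{L}\cap I(T)=(0)$, so ${\rm ev}_d$ restricts to a $K$-linear isomorphism $\mathcal{L}\simeq\mathcal{L}_T=\mathcal{C}_\mathcal{P}(d)$. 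Therefore $\dim_K\mathcal{C}_\mathcal{P}(d)=\dim_K KV_d=|V_d|$, the last equality because distinct monomials are $K$-linearly independent. A squarefree monomial of degree $d$ in $s$ variables is determined by a choice of $d$ of the $s$ variables, so $|V_d|=\binom{s}{d}$, as claimed.

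\textbf{Dimension for $q=2$.} Now $\mathbb{F}_2^*=\{1\}$, so $T$ is the single point $\mathbf{1}=(1,\dots,1)$ and $m=1$. The hypothesis $1\leq d\leq s$ ensures $V_d\neq\emptyset$ (for instance $t_1\cdots t_d\in V_d$), and every $t^a\in V_d$ satisfies $t^a(\mathbf{1})=1\neq 0$. Thus ${\rm ev}_d(KV_d)$ is a nonzero $\mathbb{F}_2$-subspace of $\mathbb{F}_2^1=\mathbb{F}_2$, hence equals $\mathbb{F}_2$ and has dimension $1$.

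There is no genuine obstacle here. The only points deserving care are: invoking Lemma~\ref{apr26-20} to transfer linear independence of $V_d$ inside $S$ to linear independence of the corresponding codewords (rather than re-proving it by hand), and, in the case $q=2$, recalling that the image of a linear map is a subspace, so once it contains a nonzero vector it is all of the one-dimensional space $\mathbb{F}_2$.
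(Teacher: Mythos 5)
Your proof is correct and follows essentially the same route as the paper: the length is $|T|=(q-1)^s$, the case $q\geq 3$ is handled by combining Lemma~\ref{toric-squarefree-standard} with Lemma~\ref{apr26-20} to see that ${\rm ev}_d$ is injective on $KV_d$, whose dimension is $|V_d|=\binom{s}{d}$, and the case $q=2$ reduces to evaluating at the single point $(1,\dots,1)$. Your treatment of $q=2$ is slightly more explicit than the paper's (which simply asserts $\mathcal{C}_\mathcal{P}(d)=\mathbb{F}_2$), but the argument is the same.
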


\begin{proof} The length $m$ of $\mathcal{C}_\mathcal{P}(d)$ is the 
number of points of $T$, that is, $m=(q-1)^s$. 
Assume that $q\geq 3$. The number of squarefree monomials 
of $S$ of degree $d$ is $\binom{s}{d}$. Then, one has
$\dim_K(KV_d)=\binom{s}{d}$. Hence, it suffices to note that, by 
Lemmas~\ref{apr26-20} and \ref{toric-squarefree-standard}, 
the evaluation map gives an isomorphism between 
$KV_d$ and $\mathcal{C}_\mathcal{P}(d)$.

Assume that $q=2$. Then, $T=\{(1,\ldots,1)\}$, $m=1$, 
$\mathcal{C}_\mathcal{P}(d)=\mathbb{F}_2$, and $\dim_K(\mathcal{C}_\mathcal{P}(d))=1$.
\end{proof}
We come to the main result of this section.
\begin{theorem}\label{Notre-Dame-Cathedral-Apr15-2019} 
Let $\mathcal{C}_\mathcal{P}(d)$ be the toric code of $\mathcal{P}$
of degree $d$ and let $\delta(\mathcal{C}_\mathcal{P}(d))$
be its minimum distance. Then 
$$
\delta(\mathcal{C}_\mathcal{P}(d))=\begin{cases}
(q-2)^d(q-1)^{s-d}&\mbox{ if }d\leq s/2,\, q\geq 3,\\
(q-2)^{s-d}(q-1)^{d}&\mbox{ if }s/2 < d < s, \,q\geq 3,\\
(q-1)^{s} &\mbox{ if } d=s,
\\
1 &\mbox{ if } q=2.
\end{cases}
$$
\end{theorem}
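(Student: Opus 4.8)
The cases $q=2$ and $d=s$ are immediate: when $q=2$ the torus is a single point and $\mathcal{C}_\mathcal{P}(s)$ is one-dimensional with minimum distance $1$; when $d=s$ there is only one squarefree monomial $t_1\cdots t_s$ of degree $s$, and it never vanishes on $T=(\mathbb{F}_q^*)^s$, so its evaluation vector has full weight $(q-1)^s$. The substance of the theorem is the case $q\geq 3$, $1\leq d<s$, which by Proposition~\ref{may6-19} and Theorem~\ref{GHW-standard-code} (applied with $\mathcal{L}=KV_d$, $r=1$, using Lemma~\ref{toric-squarefree-standard} to know this is a standard evaluation code) reduces to computing
$$
\delta(\mathcal{C}_\mathcal{P}(d))=(q-1)^s-\max\{|V_T(f)|\,:\,0\neq f\in KV_d\}.
$$

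\textbf{Upper bound on the distance (lower bound on $\max|V_T(f)|$): an explicit witness.} The plan is to exhibit a squarefree degree-$d$ polynomial $f$ whose zero set on $T$ is as large as $(q-1)^s$ minus the claimed value. For $d\leq s/2$ I would take a product of $d$ disjoint linear factors, e.g. $f=\prod_{i=1}^d(t_i-a_i)$ for suitable nonzero constants $a_i$ — wait, that is not squarefree-homogeneous of degree $d$ in the monomial-span sense; instead I take $f$ to be a $\mathbb{F}_q$-linear combination of squarefree degree-$d$ monomials realizing such a product after clearing, or more directly work with $g=\prod_{i=1}^d t_i - c\prod\text{(other squarefree degree-}d\text{ monomials)}$ chosen so that $g$ factors. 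The cleanest route: for $d\leq s/2$ pick pairwise disjoint index pairs and set $f=\prod_{j=1}^{d}(t_{2j-1}-t_{2j})$ — this lies in $KV_d$, is squarefree, and $V_T(f)$ consists of the points where some coordinate in a pair equals its partner, giving $|V_T(f)|=(q-1)^s-(q-2)^d(q-1)^{s-d}$ by inclusion–exclusion (or directly, the complement forces $t_{2j-1}\neq t_{2j}$ for each $j$: that is $(q-2)$ choices for $t_{2j}$ given $t_{2j-1}$, i.e. $(q-2)^d(q-1)^{s-d}$ points). For $s/2<d<s$ a symmetric construction using $s-d$ pairs on the complementary block (so that $f$ uses all $s$ variables) yields $|V_T(f)|=(q-1)^s-(q-2)^{s-d}(q-1)^{d}$; here one must be careful that $f$ still lands in $KV_d$, so I would instead multiply such a product of $s-d$ binomials by $\prod t_i$ over the remaining $2d-s$ indices, keeping degree $d$ and squarefreeness.

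\textbf{Lower bound on the distance (upper bound on $\max|V_T(f)|$).} Proposition~\ref{footprint-technique} already gives $|V_T(f)|\leq(q-1)^s-(q-2)^d(q-1)^{s-d}$ for all $0\neq f\in KV_d$ when $1\leq d<s$, via the footprint bound of Theorem~\ref{degree-initial-footprint-affine} and Lemma~\ref{apr5-19}. This settles the regime $d\leq s/2$ completely, since there $(q-2)^d(q-1)^{s-d}\leq(q-2)^{s-d}(q-1)^d$. For $s/2<d<s$ the footprint bound from Proposition~\ref{footprint-technique} is weaker than claimed, so the main obstacle is the sharper estimate $|V_T(f)|\leq(q-1)^s-(q-2)^{s-d}(q-1)^d$. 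I expect to prove this by a more refined footprint/degree argument: rather than bounding $\deg(S/(I,f))$ by $\deg(S/(L,\mathrm{in}_\prec(f)))$ for a single monomial, one should use that the initial ideal $\mathrm{in}_\prec(I,f)$ contains $t_1^{q-1},\dots,t_s^{q-1}$ together with $\mathrm{in}_\prec(f)$, and additionally — since $f$ is squarefree of degree $d>s/2$ — any $s-d$ of the variables not appearing in $\mathrm{in}_\prec(f)$ are ``few,'' forcing extra monomials into the initial ideal (e.g. via $S$-polynomials with the $t_i^{q-1}$), which improves the degree estimate to the desired value. Concretely I would show $\deg(S/(I,f))\leq (q-1)^s-(q-2)^{s-d}(q-1)^d$ by an inclusion–exclusion count of the monomials that survive, exploiting that a squarefree $f$ of degree $d$ imposes a nontrivial condition involving at least $s-d+1$ of its support coordinates through its lowest-degree behavior, or alternatively by passing to the complementary polynomial $f^\vee$ of degree $s-d$ obtained via the substitution $t_i\mapsto t_i^{-1}$ (an automorphism of $T$), under which $V_T(f)$ and $V_T(f^\vee)$ have equal cardinality and $s-d\leq s/2$, so Proposition~\ref{footprint-technique} applies to $f^\vee$. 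This duality $t_i\mapsto 1/t_i$ is the key trick: it exchanges the ranges $d\leq s/2$ and $d>s/2$, reduces both the upper and lower bounds in the second case to the first, and is the step I would spend the most care on — checking that multiplying by $\prod t_i$ to restore squarefreeness and degree is compatible with $V_T$ and with membership in $KV_{s-d}$.
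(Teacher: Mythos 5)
Your proposal is correct and follows essentially the same route as the paper: the reduction via Theorem~\ref{GHW-standard-code}, the footprint bound of Proposition~\ref{footprint-technique} for $d\le s/2$, the witness $\prod_{j=1}^{d}(t_{2j-1}-t_{2j})$ counted by inclusion--exclusion, and the inversion $t_i\mapsto t_i^{-1}$ combined with the complementary monomials $M_i^c=t_1\cdots t_s/M_i$ to reduce $s/2<d<s$ to the first case. Your tentative ``refined footprint via $S$-polynomials'' idea would not work as stated, but you correctly discard it in favor of the duality, which is exactly the paper's key step (Eq.~(\ref{feb11-20})).
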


\begin{proof} Assume that $s\geq 2d$ and $q\geq 3$. We set 
$\eta(d)=(q-2)^d(q-1)^{s-d}$ and
$\phi(d)=(q-1)^{s}-\eta(d)$. Let $T=(K^*)^s$ be the affine torus
in $\mathbb{A}^s$, let $\prec$ be a monomial order on $S$, and let
$\mathcal{L}:=KV_d$ be the linear space 
generated by $V_d$. By Lemma~\ref{toric-squarefree-standard},
$\mathcal{L}_X=\mathcal{C}_\mathcal{P}(d)$ is a standard evaluation
code. 
Then, by Theorems~\ref{affine-zeros-formula} and
\ref{GHW-standard-code}, there 
is $0\neq f\in\mathcal{L}$ such that
\begin{equation}\label{jun14-19}
\delta(\mathcal{C}_\mathcal{P}(d))=\min\{|T\setminus V_T(g)|\colon
0\neq g\in\mathcal{L}\}=|T\setminus
V_T(f)|=(q-1)^{s}-|V_T(f)|.
\end{equation}
\quad Thus, by Proposition~\ref{footprint-technique}, one has
$|V_T(f)|\leq\phi(d)$ and $\delta(\mathcal{C}_\mathcal{P}(d))\geq
\eta(d)$. Consider the 
squarefree homogeneous polynomial of degree $d$
$$
f_d=h_1\cdots h_d=(t_1-t_2)\cdots(t_{2d-1}-t_{2d}),
$$
where $h_i=t_{2i-1}-t_{2i}$ for $i=1,\ldots,d$. By
Eq.~(\ref{jun14-19}), to prove the inequality 
$\delta(\mathcal{C}_\mathcal{P}(d))\leq\eta(d)$ it suffices to show that the polynomial $f_d$ has
exactly $\phi(d)$ roots in $T$. As 
$V_T(f_d)$ is equal to $\bigcup_{i=1}^dV_T(h_i)$, using the inclusion-exclusion 
principle \cite[p.~38, Formula~2.12]{aigner}, we get 
\begin{eqnarray*}
|V_T(f_d)|&=&\sum_{1\leq \ell_1\leq
d}|V_T(h_{\ell_1})|-\sum_{1\leq \ell_1<\ell_2\leq
d}|V_T(h_{\ell_1})\textstyle\bigcap V_T(h_{\ell_2})|\\
& &\ \ \ \ \ \ \ \ +\cdots+(-1)^{d-1}|V_T(h_1)
\textstyle\bigcap \cdots
\textstyle\bigcap  V_T(h_d)|.
\end{eqnarray*}
\quad The variables occurring in $h_i$ and $h_j$ are disjoint for $i\neq j$. 
Thus, counting monomials in each of the intersections, one obtains
$$
\sum_{1\leq\ell_1<\cdots<\ell_i\leq
d}|V_T(h_{\ell_1})\textstyle\bigcap \cdots\textstyle\bigcap  V_T(h_{\ell_i})|=
\binom{d}{i}(q-1)^{s-i},
$$
and consequently the number of zeros of $f_d$ in $T$ is given by
\begin{eqnarray}\label{Notre-Dame-Cathedral}
|V_T(f_d)|&=&\sum_{i=1}^d(-1)^{i-1}\binom{d}{i}(q-1)^{s-i}=(q-1)^{s-d}
\sum_{i=1}^d(-1)^{i-1}\binom{d}{i}(q-1)^{d-i}\nonumber\\
&=&(q-1)^{s-d}\left[(q-1)^d-((q-1)-1)^d\right]\nonumber\\
&=&
(q-1)^{s-d}\left[(q-1)^d-(q-2)^d\right]\nonumber\\
&=&(q-1)^s-(q-2)^d(q-1)^{s-d}=\phi(d).
\end{eqnarray}

Assume that $s<2d$, $d<s$ and $q\geq 3$. The affine torus
$T$ in $\mathbb{A}^{s}$ is a group under
componentwise multiplication and the map
$\sigma\colon{T}\rightarrow{T}$,
$[(x_1,\ldots,x_s)]\mapsto [(x_1^{-1},\ldots,x_s^{-1})]$ is
a group isomorphism. Setting $Q_i:=\sigma(P_i)$ for $i=1,\ldots,m$, we can write 
$$
{T}=\{P_1,\ldots,P_m\}=\{Q_1,\ldots,Q_m\}.
$$
\quad We denote the toric code of $\mathcal{P}$ of degree
$d$ with respect to $\{P_1,\ldots,P_m\}$ by $\mathcal{C}_d$ and 
denote the toric code of 
$$
\mathcal{Q}:={\rm
conv}(\{\mathbf{e}_{i_1}+\cdots+\mathbf{e}_{i_{s-d}}\vert\, 1\leq
i_1<\cdots< i_{s-d}\leq s\})
$$
of degree $s-d$ with respect to $\{Q_1,\ldots,Q_m\}$ by
$\mathcal{C}_{s-d}$. Thus,
\begin{eqnarray*}
\mathcal{C}_d=\{(f(P_1),\ldots,f(P_m))\vert\, f\in KV_d\},
\ \ \ \ \ \ \ \ \ \ \ 
& &\\
\mathcal{C}_{s-d}=\{(g(Q_1),\ldots,g(Q_m))\vert\,
g\in KV_{s-d}\}.& &
\end{eqnarray*}
\quad The basic parameters of the projective toric codes 
$\mathcal{C}_d$ and $\mathcal{C}_{s-d}$
do not depend on how we order the elements of $T$. For use 
below, let $\{M_1,\ldots,M_n\}$ be the set of all squarefree monomials
of  $S$ of degree $d$. Setting $M_i^c:=t_1\cdots t_s/M_i$ for
$i=1,\ldots,n$, note that $\{M_1^c,\ldots,M_n^c\}$ is the set of all
squarefree monomials of $S$ of degree $s-d$. If $f\in KV_d$, 
writing $f=\sum_{i=1}^n\lambda_iM_i$, $\lambda_i\in\mathbb{F}_q$ for all $i$, 
we set $f^c:=\sum_{i=1}^n\lambda_iM_i^c$. From the equalities 
\begin{eqnarray*}
t_1\cdots t_sf^c\left(\frac{1}{t_1},\ldots,\frac{1}{t_s}\right)
&=&t_1\cdots
t_s\sum_{i=1}^n\lambda_iM_i^c\left(\frac{1}{t_1},\ldots,\frac{1}{t_s}\right)\\
&=&
\sum_{i=1}^n\lambda_iM_i(t_1,\ldots,t_s)=f(t_1,\ldots,t_s),
\end{eqnarray*}
we get $t_1\cdots t_sf^c\left({t_1^{-1}},\ldots,{t_s^{-1}}\right)
=f(t_1,\ldots,t_s)$. Hence, setting $P_k=(p_{k,1},\ldots,p_{k,s})$,
one has 
\begin{equation}\label{feb11-20}
p_{k,1}\cdots p_{k,s}f^c(p_{k,1}^{-1},\ldots,
p_{k,s}^{-1})=p_{k,1}\cdots p_{k,s}f^c(Q_k)=f(P_k) 
\end{equation}
for $k=1,\ldots,m$. There is a commutative diagram
$$
\begin{array}{ccc}
\begin{array}{ccc}
KV_d
&\stackrel{\mathrm{ev}_d\ }{\longrightarrow}& 
\mathcal{C}_{d}\\ 
\Big\downarrow\rlap{$\psi$}& &\Big\downarrow\rlap{$\psi'$}\\ 
KV_{s-d}
&\stackrel{\mathrm{ev}_{s-d}}{\longrightarrow}& 
\mathcal{C}_{s-d}
\end{array}
&
\quad
&
\begin{array}{ccc}
f &{\longrightarrow}& 
(f(P_1),\ldots,f(P_m))\\ 
\Big\downarrow& &\Big\downarrow\\ 
f^{c}
&{\longrightarrow}& 
(f^c(Q_1),\ldots,f^c(Q_m))
\end{array}
\end{array}
$$
where all the maps are isomorphisms of linear spaces. By
Eq.~(\ref{feb11-20}), $f(P_k)=0$ if and only if $f^c(Q_k)=0$. Hence,
the linear codes $\mathcal{C}_d$ and $\mathcal{C}_{s-d}$ have the  
same minimum distance. Since $s-d\leq s/2$, by the previous part, 
it follows that 
$$
\delta(\mathcal{C}_\mathcal{P}(s-d)):=\delta(\mathcal{C}_{s-d})=
(q-2)^{s-d}(q-1)^{s-(s-d)}=(q-2)^{s-d}(q-1)^{d}.
$$
\quad Thus, the minimum distance of $\mathcal{C}_d$ is
equal to $(q-2)^{s-d}(q-1)^{d}$, as required.

Assume that $d=s$. Then, $KV_d$ is generated by $f:=t_1\cdots
t_d$ and the toric code $\mathcal{C}_\mathcal{P}(d)$ is generated by
$(f(P_1),\ldots,f(P_m))$. Since $f(P_i)\neq 0$ for all $i$, one has
$\delta(\mathcal{C}_\mathcal{P}(d))=m=(q-1)^{s}$.

Assume that $q=2$. Then, ${T}=\{(1,\ldots,1)\}$, $m=1$, and
$\mathcal{C}_\mathcal{P}(d)=\mathbb{F}_2$. Thus,
$\delta(\mathcal{C}_\mathcal{P}(d))=1$. 
\end{proof}

\begin{corollary}\label{case-s-2d} Let $f$ be a squarefree 
homogeneous polynomial in $S_d\setminus\mathbb{F}_q$ and let $T$
be the affine torus $(\mathbb{F}_q^*)^s$ of $\mathbb{A}^s$. If $q\geq
3$ and $s/2 < d < s$, then 
$$
|V_T(f)|\leq(q-1)^{s}-(q-2)^{s-d}(q-1)^{d},
$$
and this upper bound is sharp.
\end{corollary}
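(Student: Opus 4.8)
The plan is to obtain the inequality as an immediate corollary of Theorem~\ref{Notre-Dame-Cathedral-Apr15-2019} and then to establish sharpness by writing down an explicit squarefree homogeneous polynomial of degree $d$ that meets the bound. First I would observe that $s/2<d$ forces $d\geq 1$, so the hypothesis $f\in S_d\setminus\mathbb{F}_q$ only says $f\neq 0$; and since every monomial of the squarefree polynomial $f$ is a squarefree monomial of degree $d$, we have $0\neq f\in\mathcal{L}:=KV_d$. By Lemma~\ref{toric-squarefree-standard} the toric code $\mathcal{C}_\mathcal{P}(d)=\mathcal{L}_T$ is a standard evaluation code on the affine torus $T=(\mathbb{F}_q^*)^s$ relative to any monomial order. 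Applying Theorem~\ref{GHW-standard-code} with $r=1$ (rescaling a polynomial to be monic changes neither $V_T(g)$ nor the ideal $(I(T),g)$, so the maximum over monic standard polynomials equals the one over $\mathcal{L}\setminus\{0\}$), together with Theorem~\ref{affine-zeros-formula}, which gives $\deg(S/(I(T),g))=|V_T(g)|$, and with $\deg(S/I(T))=(q-1)^s$, I obtain
$$
\delta(\mathcal{C}_\mathcal{P}(d))=(q-1)^s-\max\{\,|V_T(g)|\colon 0\neq g\in\mathcal{L}\,\}.
$$
Since $q\geq 3$ and $s/2<d<s$, Theorem~\ref{Notre-Dame-Cathedral-Apr15-2019} evaluates the left-hand side as $(q-2)^{s-d}(q-1)^d$, so the maximum above equals $(q-1)^s-(q-2)^{s-d}(q-1)^d$. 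As $f$ is one of the competitors in that maximum, the asserted inequality $|V_T(f)|\leq (q-1)^s-(q-2)^{s-d}(q-1)^d$ follows.

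For sharpness I would set $s'=s-d$, so that $2s'<s$ because $d>s/2$, and consider
$$
g_d=\Bigl(\prod_{j=2s'+1}^{s}t_j\Bigr)\,h,\qquad h:=\prod_{i=1}^{s'}(t_{2i-1}-t_{2i}),
$$
which is squarefree and homogeneous of degree $(s-2s')+s'=s-s'=d$ (up to sign, $g_d$ is the complement $h^c$ of the polynomial $h$ appearing in the proof of Theorem~\ref{Notre-Dame-Cathedral-Apr15-2019}). Because every coordinate of every point of $T$ is nonzero, the factor $\prod_{j>2s'}t_j$ is a unit on $T$, so $V_T(g_d)=V_T(h)$; the inclusion--exclusion count of Eq.~(\ref{Notre-Dame-Cathedral}), carried out with $d$ there replaced by $s'=s-d$ (which is legitimate since $2s'<s$), then yields $|V_T(g_d)|=(q-1)^s-(q-2)^{s-d}(q-1)^{\,s-(s-d)}=(q-1)^s-(q-2)^{s-d}(q-1)^d$. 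Hence the bound is attained by $g_d$, and the statement is proved.

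I do not expect a genuinely hard step here, since all of the substance is already contained in Theorem~\ref{Notre-Dame-Cathedral-Apr15-2019}; the only points needing care are the routine checks that $f$ indeed lies in $KV_d$ (so that it competes for the maximum), that $g_d$ has degree exactly $d$ and is squarefree, and that the inclusion--exclusion computation of Eq.~(\ref{Notre-Dame-Cathedral}) transports cleanly under the substitution $d\mapsto s-d$ and the deletion of the unit factor $\prod_{j>2s'}t_j$.
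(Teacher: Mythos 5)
Your proposal is correct and follows essentially the same route as the paper: the inequality is deduced from the minimum–distance formula of Theorem~\ref{Notre-Dame-Cathedral-Apr15-2019} via the identity $\delta(\mathcal{C}_\mathcal{P}(d))=(q-1)^s-\max\{|V_T(g)|\colon 0\neq g\in KV_d\}$, and sharpness is witnessed by the very same polynomial $(t_1-t_2)\cdots(t_{2(s-d)-1}-t_{2(s-d)})\,t_{2(s-d)+1}\cdots t_s$ together with the inclusion--exclusion count of Eq.~(\ref{Notre-Dame-Cathedral}) applied with $d$ replaced by $s-d$. The auxiliary checks you flag (membership of $f$ in $KV_d$, the harmlessness of rescaling to monic, and the unit monomial factor on $T$) are all handled correctly.
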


\demo  Let $\delta(\mathcal{C}_\mathcal{P}(d))$ be the minimum distance of
$\mathcal{C}_\mathcal{P}(d)$. By
Theorem~\ref{Notre-Dame-Cathedral-Apr15-2019}, one has
\begin{eqnarray*}
\delta(\mathcal{C}_\mathcal{P}(d))&=&\min\{|{T}\setminus V_{T}(g)|\colon
0\neq g\in KV_d\}\\ 
&=&|{T}|-\max\{|V_{T}(g)|\colon
0\neq g\in KV_d\}
=(q-2)^{s-d}(q-1)^{d}.
\end{eqnarray*}
\quad Therefore, $|V_T(f)|\leq(q-1)^s-(q-2)^{s-d}(q-1)^d$ 
and this upper bound is sharp. For convenience we construct a polynomial
in $KV_d$ where equality occurs.  There is $1\leq k\leq d-1$ such
that $s=d+k$. Consider the following 
squarefree homogeneous polynomial of degree $d$
$$
g_k:=h_1\cdots h_kt_{2k+1}\cdots
t_{k+d}=(t_1-t_2)\cdots(t_{2k-1}-t_{2k})t_{2k+1}\cdots t_{k+d},
$$
where $h_i=t_{2i-1}-t_{2i}$ for $i=1,\ldots,k$. Since 
$V_T(g_k)$ is equal to $V_T(f_k)=\bigcup_{i=1}^kV_T(h_i)$,
where $f_k:=h_1\cdots h_k$, and $k=s-d$, using
Eq.~(\ref{Notre-Dame-Cathedral}) with $d\rightarrow k=s-d$ and
noticing $s>2k$, we get 
\begin{eqnarray*}
|V_T(g_k)|&=&|V_T(h_1\cdots h_k)|=|V_T(f_k)|\\
&=&(q-1)^s-(q-2)^k(q-1)^{s-k}\\
&=&(q-1)^s-(q-2)^{s-d}(q-1)^{d}.\quad \Box
\end{eqnarray*}
\quad The minimum distance $\delta(C_{T}(d))$ of the affine 
Reed--Muller-type code $C_T(d)$ is non-increasing as a
function of $d$ (Proposition~\ref{behavior-hilbert-function}). This
is no longer 
the case for the minimum distance of the toric code
$\mathcal{C}_\mathcal{P}(d)$ (Example~\ref{example-ptc}).

\section{Squarefree affine evaluation
codes}\label{squarfree-codes-section}

In this section we determine the minimum distance and the 2nd
generalized Hamming weight of a squarefree evaluation code.  
To avoid repetitions, we continue to employ
the notations and definitions used in Sections~\ref{intro-section} 
and \ref{GHW-section}. 

\begin{proposition}\label{squarefree-affine} Let $f$ be a squarefree 
polynomial in $S\setminus\mathbb{F}_q$ of degree at most $d$ and let $T$
be the affine torus $(\mathbb{F}_q^*)^s$ of $\mathbb{A}^s$. If $q\geq
3$ and $d\leq s$, then 
$$
|V_T(f)|\leq(q-1)^{s}-(q-2)^{d}(q-1)^{s-d},
$$
with equality if $d\geq 1$ and $f=(t_1-1)\cdots(t_d-1)$.
\end{proposition}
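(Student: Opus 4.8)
The plan is to run the footprint argument of Proposition~\ref{footprint-technique} almost verbatim, but for a polynomial of degree \emph{at most} $d$, and then to absorb the resulting slack with a one-line monotonicity estimate; the equality statement will then come from a direct point count. First I would fix a monomial order $\prec$ on $S$, set $I=I(T)$ and $L=\mathrm{in}_\prec(I)$, and recall from Lemma~\ref{toric-squarefree-standard}(a) that $L=(t_1^{q-1},\dots,t_s^{q-1})$. Since $f$ is squarefree, so is its initial monomial $t^a=\mathrm{in}_\prec(f)=t_1^{a_1}\cdots t_s^{a_s}$, so each $a_i\in\{0,1\}$ and $e:=\deg(t^a)\le\deg(f)\le d$; because $q\ge 3$ we have $a_i<q-1$, hence $t^a\notin L$.

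Next I would invoke Theorem~\ref{affine-zeros-formula} to write $|V_T(f)|=\deg(S/(I,f))$. If $V_T(f)=\emptyset$ there is nothing to prove, since the right-hand side $(q-1)^s-(q-2)^d(q-1)^{s-d}$ is positive when $q\ge 3$ and $1\le d\le s$; so I may assume $V_T(f)\ne\emptyset$, which by Lemma~\ref{vila-delio-feb27-20} forces $(I\colon(f))\ne I$. Then Theorem~\ref{degree-initial-footprint-affine} gives $\deg(S/(I,f))\le\deg(S/(L,t^a))$, and Lemma~\ref{apr5-19} with $d_1=\cdots=d_s=q-1$ evaluates the right-hand side as
$$
\deg(S/(L,t^a))=(q-1)^s-\prod_{i=1}^s(q-1-a_i)=(q-1)^s-(q-2)^e(q-1)^{s-e},
$$
because exactly $e$ of the factors $q-1-a_i$ equal $q-2$ and the remaining $s-e$ equal $q-1$. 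To finish the inequality I would then compare exponents: since $0\le e\le d\le s$ and $0<q-2\le q-1$,
$$
(q-2)^d(q-1)^{s-d}=(q-2)^e(q-1)^{s-e}\Bigl(\tfrac{q-2}{q-1}\Bigr)^{d-e}\le(q-2)^e(q-1)^{s-e},
$$
so $|V_T(f)|\le(q-1)^s-(q-2)^e(q-1)^{s-e}\le(q-1)^s-(q-2)^d(q-1)^{s-d}$, as claimed.

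For the equality case with $d\ge 1$ and $f=(t_1-1)\cdots(t_d-1)$, I would simply count directly: a point $P=(p_1,\dots,p_s)\in T=(\mathbb{F}_q^*)^s$ has $f(P)\ne 0$ exactly when $p_i\ne 1$ for $i=1,\dots,d$, leaving $q-2$ choices for each of $p_1,\dots,p_d$ and $q-1$ choices for each of $p_{d+1},\dots,p_s$; hence $|T\setminus V_T(f)|=(q-2)^d(q-1)^{s-d}$ and $|V_T(f)|=(q-1)^s-(q-2)^d(q-1)^{s-d}$. (Equivalently one can apply the inclusion--exclusion computation of Eq.~(\ref{Notre-Dame-Cathedral}) to the polynomials $h_i=t_i-1$, which have pairwise disjoint supports.) There is no real obstacle here; the only points needing a little care are the degenerate case $V_T(f)=\emptyset$ and the fact that $\mathrm{in}_\prec(f)$ may have degree strictly below $d$ when $\prec$ is not graded or $f$ is not homogeneous — both handled by the monotonicity step above.
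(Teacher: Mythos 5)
Your proof is correct and follows essentially the same route as the paper: both pass from $|V_T(f)|=\deg(S/(I,f))$ to the footprint bound $\deg(S/({\rm in}_\prec(I),{\rm in}_\prec(f)))$ via Theorem~\ref{degree-initial-footprint-affine}, evaluate it with Lemma~\ref{apr5-19}, and absorb the slack between the relevant degree and $d$ by the monotonicity of $(q-2)^e(q-1)^{s-e}$ in $e$ (the paper first reduces to $\deg f=d$ and takes $\prec$ graded so that $\deg({\rm in}_\prec(f))=d$, whereas you keep an arbitrary order and apply the monotonicity directly to $e=\deg({\rm in}_\prec(f))$ --- a harmless reorganization). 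Your direct complement count $(q-2)^d(q-1)^{s-d}$ for the equality case replaces the paper's appeal to inclusion--exclusion and is, if anything, cleaner.
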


\begin{proof} Let $I=I(T)$ be the vanishing ideal of $T$. If $f$ is
not a zero divisor of $S/I$, then $(I\colon f)=I$ and, 
by Lemma~\ref{degree-formula-zeros-affine}, the set 
$V_T(f)$ is empty and the required inequality is clear. 
Thus, we may assume that $f$ is a zero-divisor of $S/I$. In particular
$d\geq 1$. If $e=\deg(f)$, then
$$
(q-1)^s-(q-2)^e(q-1)^{s-e}\leq (q-1)^s-(q-2)^d(q-1)^{s-d}.
$$
\quad Thus, we may also assume that $d=\deg(f)$. Let $\prec$ be a
graded monomial order and let $t^a={\rm in}_\prec(f)$ be the initial
monomial of
$f$. Note that $t^a$ is squarefree and $d=\deg(t^a)$ since $\prec$ is
graded. The initial ideal $L:={\rm in}_\prec(I)$ of $I$ is
generated by the set of monomials $\{t_i^{q-1}\}_{i=1}^{s}$
(Lemma~\ref{toric-squarefree-standard}). As $q\geq 3$,
$t^a$ cannot be in $L$. Hence, by
Theorem~\ref{degree-initial-footprint-affine} and Lemma~\ref{apr5-19}, we get 
\begin{equation*}\label{feb13-20}
|V_T(f)|=\deg(S/(I,f))\leq \deg(S/(L,t^a)=(q-1)^{s}-(q-2)^{d}(q-1)^{s-d}.
\end{equation*}
\quad Now, assume that
$f=h_1\cdots h_d$, where $h_i=t_i-1$ for $i=1,\ldots,d$. As in the
proof of Theorem~\ref{Notre-Dame-Cathedral-Apr15-2019}, using the inclusion-exclusion 
principle \cite[p.~38, Formula~2.12]{aigner}, the formula for $|V_T(f)|$
follows readily. 
\end{proof}

\begin{proposition}\label{feb28-20-1} 
Let $\mathcal{C}_{\leq d}$ be the squarefree evaluation code of degree $d$ on the
affine torus $T=(\mathbb{F}_q^*)^s$. Then, the length of
$\mathcal{C}_{\leq d}$ is $(q-1)^s$, and the dimension 
of $\mathcal{C}_{\leq d}$ is given by
$$
\dim_K(\mathcal{C}_{\leq d})=\begin{cases}
\binom{s}{0}+\binom{s}{1}+\cdots+\binom{s}{d}&\mbox{ if }q\geq 3,\\
\ 1&\mbox{ if }q=2.
\end{cases}
$$
\end{proposition}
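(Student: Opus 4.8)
The plan is to follow the same strategy used for the toric code over a hypersimplex in Proposition~\ref{may6-19}, simply replacing $V_d$ by $V_{\leq d}$. First I would dispatch the length: by the definition of the evaluation map in Eq.~(\ref{may10-20}) (with $KV_d$ replaced by $S_{\leq d}$, resp. $KV_{\leq d}$), the length of $\mathcal{C}_{\leq d}$ is the number of points of the affine torus $T=(\mathbb{F}_q^*)^s$, which is $(q-1)^s$; no further argument is needed.

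Next I would treat the case $q\geq 3$. The key combinatorial point is that $V_{\leq d}$, the set of squarefree monomials of $S$ of degree at most $d$, has exactly $\binom{s}{0}+\binom{s}{1}+\cdots+\binom{s}{d}$ elements, since a squarefree monomial of degree $i$ corresponds to a choice of $i$ of the $s$ variables. By Lemma~\ref{toric-squarefree-standard}(a), for any monomial order $\prec$ every squarefree polynomial is a standard polynomial of $S/I(T)$, so $V_{\leq d}\subset\Delta_\prec(I(T))$ and hence $\mathcal{L}:=KV_{\leq d}$ is a linear subspace of $K\Delta_\prec(I(T))$ of $K$-dimension $\sum_{i=0}^{d}\binom{s}{i}$; moreover $\mathcal{L}_T=\mathcal{C}_{\leq d}$ is a standard evaluation code on $T$ by Lemma~\ref{toric-squarefree-standard}(b). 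Then Lemma~\ref{apr26-20} gives $\mathcal{L}\cap I(T)=(0)$ together with an isomorphism $\mathcal{L}\simeq\mathcal{L}_T$, so $\dim_K(\mathcal{C}_{\leq d})=\dim_K(KV_{\leq d})=\sum_{i=0}^{d}\binom{s}{i}$, as claimed.

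Finally, for $q=2$ the torus $T=(\mathbb{F}_2^{\,*})^s$ consists of the single point $(1,\ldots,1)$, so the length is $m=1$; since the constant monomial $1$ lies in $V_{\leq d}$, the image of the evaluation map is all of $\mathbb{F}_2$, and therefore $\dim_K(\mathcal{C}_{\leq d})=1$.

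I do not anticipate any real obstacle: the proof is a direct transcription of the argument for Proposition~\ref{may6-19}, and the only mild care required is the count of squarefree monomials of bounded degree and the appeal to Lemma~\ref{toric-squarefree-standard}(a) to guarantee that those monomials remain linearly independent modulo $I(T)$ (which is exactly where the hypothesis $q\geq 3$ enters).
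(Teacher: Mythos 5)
Your proposal is correct and follows essentially the same route as the paper's proof: count the squarefree monomials of degree at most $d$ to get $\dim_K(KV_{\leq d})=\sum_{i=0}^{d}\binom{s}{i}$, invoke Lemma~\ref{toric-squarefree-standard} to see that $\mathcal{C}_{\leq d}$ is a standard evaluation code, and then Lemma~\ref{apr26-20} to transfer the dimension via the evaluation isomorphism, with the $q=2$ case handled by noting $T$ is a single point. No gaps.
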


\begin{proof} The length $m$ of the code $\mathcal{C}_{\leq d}$ is the number of
points of $T$, that is, $m=(q-1)^s$. 
Assume that $q\geq 3$. The number of squarefree monomial
of $S$ of degree at most $d$ is equal to 
$k:=\sum_{i=0}^d\binom{s}{i}$. Then, one has
$k=\dim_K(KV_{\leq d})$. Setting $\mathcal{L}=KV_{\leq d}$, 
by Lemmas~\ref{apr26-20} and \ref{toric-squarefree-standard}, one 
has $\mathcal{L}\simeq\mathcal{L}_T=\mathcal{C}_{\leq d}$. Thus,
$\dim_K(\mathcal{C}_{\leq d})=k$.

Assume that $q=2$. Then $T=\{(1,\ldots,1)\}$, $m=1$, 
$\mathcal{C}_{\leq d}=\mathbb{F}_2$, and $\dim_K(\mathcal{C}_{\leq d})=1$.
\end{proof}

Let $\prec$ be a monomial order and  let $\mathcal{H}_{\prec,d,r}$ be
the set of all $F=\{f_1,\ldots,f_r\}\subset (KV_{\leq d})^*$ such 
that ${\rm in}_\prec(f_1),\ldots,{\rm in}_\prec(f_r)$ are distinct
monomials and $f_i$ is monic for all $i$.

\begin{corollary}\label{rth-GHW-squarefree}
Let $K=\mathbb{F}_q$ be a finite field and let $I=I(T)$ be the vanishing ideal of the
affine torus $T=(\mathbb{F}_q^*)^s$. If
$q\geq 3$ and $\prec$ is a graded monomial order, then  
$$
\delta_r(\mathcal{C}_{\leq d})=|T|-\max\{\deg(S/(I,F))\vert\,
F\in\mathcal{H}_{\prec, d,r}\}\mbox{ for }d\geq 1
\mbox{ and }1\leq r\leq\dim_K(C_{\leq d}).
$$
\end{corollary}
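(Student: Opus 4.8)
The plan is to read this off directly from Theorem~\ref{GHW-standard-code}, in exact parallel with Corollary~\ref{rth-GHW-affine}, taking $X=T$, $I=I(T)$, and $\mathcal{L}=KV_{\leq d}$. The only things that need to be checked are that $\mathcal{C}_{\leq d}$ really is a standard evaluation code on $T$ relative to $\prec$, that the family $\mathcal{L}_{\prec,r}$ attached to $\mathcal{L}=KV_{\leq d}$ is precisely $\mathcal{H}_{\prec,d,r}$, and that $\deg(S/I)=|T|$.

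First I would invoke Lemma~\ref{toric-squarefree-standard} (this is where $q\geq 3$ enters): the initial ideal ${\rm in}_\prec(I)$ is generated by $t_1^{q-1},\ldots,t_s^{q-1}$, so every squarefree monomial of $S$, in particular every element of $V_{\leq d}$, is a standard monomial of $S/I$. Hence $\mathcal{L}=KV_{\leq d}$ is a $K$-linear subspace of $K\Delta_\prec(I)$, and since $\mathcal{C}_{\leq d}={\rm ev}(KV_{\leq d})=\mathcal{L}_T$, the code $\mathcal{C}_{\leq d}$ is a standard evaluation code on $T$ relative to $\prec$. By Lemma~\ref{apr26-20} the evaluation map restricts to an isomorphism $\mathcal{L}\simeq\mathcal{C}_{\leq d}$, so $\dim_K(\mathcal{C}_{\leq d})=\dim_K(KV_{\leq d})$ (the explicit value $\sum_{i=0}^{d}\binom{s}{i}$ being Proposition~\ref{feb28-20-1}), which fixes the admissible range $1\leq r\leq\dim_K(\mathcal{C}_{\leq d})$. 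Comparing definitions, the set $\mathcal{L}_{\prec,r}$, namely the $r$-element subsets $\{f_1,\ldots,f_r\}$ of $(KV_{\leq d})^*$ with the ${\rm in}_\prec(f_i)$ pairwise distinct and each $f_i$ monic, is verbatim the set $\mathcal{H}_{\prec,d,r}$. Theorem~\ref{GHW-standard-code} then gives, for $1\leq r\leq\dim_K(\mathcal{C}_{\leq d})$,
$$
\delta_r(\mathcal{C}_{\leq d})=\deg(S/I)-\max\{\deg(S/(I,F))\mid F\in\mathcal{H}_{\prec,d,r}\},
$$
and since $I=I(T)$ is the vanishing ideal of a finite set of affine points, Proposition~\ref{behavior-hilbert-function} yields $\deg(S/I)=|T|=(q-1)^s$, so we may replace $\deg(S/I)$ by $|T|$ and obtain the asserted identity.

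There is essentially no obstacle here: unlike the Reed--Muller-type case of Corollary~\ref{rth-GHW-affine}, no division or reduction step is required, precisely because $KV_{\leq d}$ already sits inside $K\Delta_\prec(I)$ once $q\geq 3$; for the same reason the graded hypothesis on $\prec$ is not actually used in this particular equality and is kept only to match the hypotheses of the companion footprint bound of Corollary~\ref{rth-footprint-lower-bound-squarefree}. The one point to be attentive to is that the range of $r$ must be phrased in terms of $\dim_K(\mathcal{C}_{\leq d})$ and that this equals $\dim_K(KV_{\leq d})$, which is exactly what Lemma~\ref{apr26-20} (together with Lemma~\ref{toric-squarefree-standard}) provides.
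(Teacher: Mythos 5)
Your proposal is correct and follows the same route as the paper: set $\mathcal{L}=KV_{\leq d}$, invoke Lemma~\ref{toric-squarefree-standard} to see that $\mathcal{C}_{\leq d}=\mathcal{L}_T$ is a standard evaluation code relative to $\prec$, and apply Theorem~\ref{GHW-standard-code} (with $\deg(S/I)=|T|$ from Proposition~\ref{behavior-hilbert-function}). Your extra observations — that $\mathcal{L}_{\prec,r}=\mathcal{H}_{\prec,d,r}$ by definition and that the graded hypothesis on $\prec$ is not actually needed for this particular identity — are accurate and only make explicit what the paper leaves implicit.
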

\begin{proof}
We set $\mathcal{L}=KV_{\leq d}$. By
Lemma~\ref{toric-squarefree-standard}, $\mathcal{L}_T$ is a standard
evaluation code on $T$ relative to $\prec$. Then, $\mathcal{C}_{\leq
d}=\mathcal{L}_X$, and the formula for
$\delta_r(\mathcal{C}_{\leq d})$ is a direct consequence of    
Theorem~\ref{GHW-standard-code}.
\end{proof}

To show a lower bound for $\delta_r(\mathcal{C}_{\leq d})$,
let $\mathcal{J}_{\prec, d, r}$ be the 
family of all sets $M=\{t^{a_1},\ldots,t^{a_r}\}$ such that
$t^{a_1},\ldots,t^{a_r}$ are distinct squarefree monomials of 
$S_{\leq d}$. 
The $r$-th \textit{squarefree footprint} of
$\mathcal{C}_{\leq d}$ of
degree $d$, 
denoted $\rho_I(d,r)$, is given
by 
$$
\rho_I(d,r):=\deg(S/I)-\max\{\deg(S/({\rm in}_\prec(I),M))\,\vert\,
M\in\mathcal{J}_{\prec, d,r}\}.
$$

\begin{corollary}\label{rth-footprint-lower-bound-squarefree}  
Let $K=\mathbb{F}_q$ be a finite field and let $I=I(T)$ be the vanishing ideal of the
affine torus $T=(\mathbb{F}_q^*)^s$. If
$q\geq 3$ and $\prec$ is a graded monomial order, then
$$\rho_{I}(d,r)\leq \delta_r(\mathcal{C}_{\leq d})\ \mbox{ for
}d\geq 1\mbox{ and }1\leq r\leq \dim_K(\mathcal{C}_{\leq d}).$$
\end{corollary}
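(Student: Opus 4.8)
The plan is to realize $\mathcal{C}_{\leq d}$ as a standard evaluation code on $T$ and then invoke the general degree--footprint lower bound of Theorem~\ref{rth-footprint-lower-bound}. Set $\mathcal{L}:=KV_{\leq d}$, the $K$-linear subspace of $S$ spanned by the squarefree monomials of degree at most $d$. By Lemma~\ref{toric-squarefree-standard}, the initial ideal ${\rm in}_\prec(I)$ equals $(t_1^{q-1},\ldots,t_s^{q-1})$, so, since $q\geq 3$, every squarefree monomial is a standard monomial of $S/I$; hence $\mathcal{L}\subset K\Delta_\prec(I)$ and $\mathcal{L}_T=\mathcal{C}_{\leq d}$ is the standard evaluation code $\mathcal{L}_T$ on $T$ relative to $\prec$.

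The only point requiring care is that the $r$-th footprint ${\rm fp}_r(\mathcal{C}_{\leq d})$ of this standard evaluation code, as defined just before Theorem~\ref{rth-footprint-lower-bound}, coincides with $\rho_I(d,r)$. For this I would identify the set ${\rm in}_\prec(\mathcal{L}^*)$ of leading monomials: any nonzero $f\in\mathcal{L}$ is a $K$-linear combination of squarefree monomials of degree at most $d$, so ${\rm in}_\prec(f)$ is one of these, i.e. a squarefree monomial of $S_{\leq d}$; conversely each squarefree monomial $t^a$ of $S_{\leq d}$ lies in $\mathcal{L}^*$ and equals its own initial monomial. Therefore ${\rm in}_\prec(\mathcal{L}^*)$ is exactly the set of squarefree monomials of $S_{\leq d}$, so the family $\mathcal{N}_{\prec,r}$ of $r$-subsets of ${\rm in}_\prec(\mathcal{L}^*)$ equals $\mathcal{J}_{\prec, d, r}$. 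Since $\deg(S/I)=|T|=(q-1)^s$ by Proposition~\ref{behavior-hilbert-function}, comparing the two definitions gives ${\rm fp}_r(\mathcal{C}_{\leq d})=\rho_I(d,r)$.

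With this identification in hand, Theorem~\ref{rth-footprint-lower-bound} applied to $\mathcal{L}_T=\mathcal{C}_{\leq d}$ yields $\rho_I(d,r)={\rm fp}_r(\mathcal{C}_{\leq d})\leq\delta_r(\mathcal{C}_{\leq d})$ for $d\geq 1$ and $1\leq r\leq\dim_K(\mathcal{C}_{\leq d})$, which is the assertion. There is no serious obstacle: everything reduces to the bookkeeping identification ${\rm in}_\prec(\mathcal{L}^*)=V_{\leq d}$ above, whose only substantive ingredient is that $q\geq 3$ forces squarefree monomials to avoid ${\rm in}_\prec(I)$ (Lemma~\ref{toric-squarefree-standard}). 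The argument is the exact squarefree analogue of Corollary~\ref{rth-footprint-lower-bound-Reed-Muller}, with $\Delta_\prec(I)_{\leq d}$ replaced by $V_{\leq d}$ and $\mathcal{M}_{\prec,d,r}$ by $\mathcal{J}_{\prec,d,r}$.
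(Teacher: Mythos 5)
Your proposal is correct and follows exactly the paper's route: set $\mathcal{L}=KV_{\leq d}$, use Lemma~\ref{toric-squarefree-standard} to see that $\mathcal{L}_T=\mathcal{C}_{\leq d}$ is a standard evaluation code, and apply Theorem~\ref{rth-footprint-lower-bound}. Your explicit verification that ${\rm in}_\prec(\mathcal{L}^*)=V_{\leq d}$, so that $\mathcal{N}_{\prec,r}=\mathcal{J}_{\prec,d,r}$ and ${\rm fp}_r(\mathcal{C}_{\leq d})=\rho_I(d,r)$, is a detail the paper leaves implicit but is exactly the right bookkeeping.
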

\begin{proof}
We set $\mathcal{L}=KV_{\leq d}$. By
Lemma~\ref{toric-squarefree-standard}, $\mathcal{L}_T$ is a standard
evaluation code on $T$ relative to $\prec$. Then, $\mathcal{C}_{\leq
d}=\mathcal{L}_X$, and the lower bound for
$\delta_r(\mathcal{C}_{\leq d})$ is a direct consequence of    
Theorem~\ref{rth-footprint-lower-bound}.
\end{proof}
We come to one of the main results of this section.

\begin{theorem}\label{min-dis-squarefree} Let $\mathcal{C}_{\leq d}$
be the squarefree evaluation code of degree $d$ on the
affine torus $T=(\mathbb{F}_q^*)^s$. If $q\geq 3$, then the minimum
distance $\delta(\mathcal{C}_{\leq d})$
of $\mathcal{C}_{\leq d}$ is $(q-2)^{d}(q-1)^{s-d}$.
\end{theorem}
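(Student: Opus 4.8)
The plan is to reduce the statement to the degree formula for the minimum distance obtained in Corollary~\ref{rth-GHW-squarefree} together with the zero-counting estimate of Proposition~\ref{squarefree-affine}. Assume $1\leq d\leq s$, set $I=I(T)$, and fix a graded monomial order $\prec$. Applying Corollary~\ref{rth-GHW-squarefree} with $r=1$, and noticing that $\mathcal{H}_{\prec,d,1}$ is simply the set of monic nonzero polynomials of $KV_{\leq d}$ (while replacing $f$ by a scalar multiple changes neither $(I,f)$ nor $\deg(S/(I,f))$), I would get
$$
\delta(\mathcal{C}_{\leq d})=(q-1)^s-\max\{\deg(S/(I,f))\mid 0\neq f\in KV_{\leq d}\}.
$$
By Theorem~\ref{affine-zeros-formula} one has $\deg(S/(I,f))=|V_T(f)|$ for every such $f$, so the whole problem collapses to computing $\max\{|V_T(f)|\mid 0\neq f\in KV_{\leq d}\}$.

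For the upper bound I would note that every nonzero $f\in KV_{\leq d}$ is a squarefree polynomial of degree at most $d$. If $f$ is a nonzero constant then $V_T(f)=\emptyset$; otherwise $f\in S\setminus\mathbb{F}_q$, and Proposition~\ref{squarefree-affine} (using $q\geq 3$ and $d\leq s$) gives $|V_T(f)|\leq(q-1)^s-(q-2)^d(q-1)^{s-d}$. For the matching lower bound I would exhibit the witness $f=(t_1-1)\cdots(t_d-1)$: it is monic, squarefree, and of degree $d\leq s$, hence $f\in KV_{\leq d}$, and by the equality clause of Proposition~\ref{squarefree-affine} it has exactly $(q-1)^s-(q-2)^d(q-1)^{s-d}$ zeros in $T$ (this count coming from inclusion--exclusion over the hyperplanes $t_i=1$, whose defining variables are pairwise disjoint). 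Since $q\geq 3$ and $d\geq 1$ force $(q-2)^d<(q-1)^d$, the common value $(q-1)^s-(q-2)^d(q-1)^{s-d}$ is strictly positive, so it is indeed the maximum (in particular it is not attained by a constant). Substituting back yields $\delta(\mathcal{C}_{\leq d})=(q-1)^s-\big((q-1)^s-(q-2)^d(q-1)^{s-d}\big)=(q-2)^d(q-1)^{s-d}$.

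At this level there is essentially no obstacle: all the substantive work has already been done in Proposition~\ref{squarefree-affine}, whose upper bound in turn rests on the footprint inequality of Theorem~\ref{degree-initial-footprint-affine} applied to $L={\rm in}_\prec(I)=(t_1^{q-1},\dots,t_s^{q-1})$, together with the explicit degree computation of Lemma~\ref{apr5-19}. The only minor points to watch are (i) ruling out that the maximizing $f$ is a constant, which is immediate from the positivity just noted, and (ii) the trivial boundary case $d=0$ (if one wants the formula there too): then $\mathcal{C}_{\leq 0}$ is the repetition-type code spanned by the all-ones vector and $\delta=(q-1)^s$, which agrees with $(q-2)^0(q-1)^{s}$; for $1\leq d\leq s$ Corollary~\ref{rth-GHW-squarefree} applies directly and no special case is needed.
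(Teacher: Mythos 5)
Your proposal is correct and follows essentially the same route as the paper: the paper also reduces the computation to Corollary~\ref{rth-GHW-squarefree} with $r=1$ and then invokes Proposition~\ref{squarefree-affine} for both the upper bound on $|V_T(f)|$ and the witness $(t_1-1)\cdots(t_d-1)$. Your extra remarks ruling out constants and handling $d=0$ are harmless refinements of the same argument.
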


\begin{proof} Let $\prec$ be a monomial order on $S$. We set
$\mathcal{L}=KV_{\leq d}$. 
As $q\geq 3$, by Lemma~\ref{toric-squarefree-standard},
$\mathcal{L}_T$ is a standard evaluation code on $T$ relative to $\prec$. 
Then, by Corollary~\ref{rth-GHW-squarefree}, we get 
$$
\delta(\mathcal{C}_{\leq d})=\min\{|T\setminus V_T(g)|\colon
g\in\mathcal{L}^*\},
$$ 
and the result follows from Proposition~\ref{squarefree-affine}.
\end{proof}

We come to another of the main results of this section. 

\begin{theorem}\label{2nd-GHW-squarefree}
If $q\geq 3$ and $d\geq 1$, then the second generalized Hamming
weight of $\mathcal{C}_{\leq d}$ is 
$$
\delta_2(\mathcal{C}_{\leq d})=
\begin{cases}
(q-2)^{s-1}(q-1)&\mbox{ if }\, d=s,\\
(q-2)^d(q-1)^{s-d-1}q&\mbox{ if }\, d<s.
\end{cases}
$$
\end{theorem}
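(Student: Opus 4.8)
The plan is to bracket $\delta_2(\mathcal{C}_{\leq d})$ between the exact degree formula of Corollary~\ref{rth-GHW-squarefree} and the footprint lower bound of Corollary~\ref{rth-footprint-lower-bound-squarefree}, and then to show the two bracket values coincide. Fix a graded monomial order $\prec$. By Lemma~\ref{toric-squarefree-standard}, $\mathcal{C}_{\leq d}$ is the standard evaluation code on $T=(\mathbb{F}_q^*)^s$ attached to $\mathcal{L}=KV_{\leq d}$, and ${\rm in}_\prec(I)=L:=(t_1^{q-1},\dots,t_s^{q-1})$ with $I=I(T)$; also $\dim_K(\mathcal{C}_{\leq d})\geq 1+s\geq 2$, so $r=2$ is admissible. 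Then Corollary~\ref{rth-GHW-squarefree} gives
$$\delta_2(\mathcal{C}_{\leq d})=(q-1)^s-\max\{\deg(S/(I,F))\mid F\in\mathcal{H}_{\prec,d,2}\},$$
while Corollary~\ref{rth-footprint-lower-bound-squarefree} gives $\delta_2(\mathcal{C}_{\leq d})\geq\rho_I(d,2)=(q-1)^s-\max\{\deg(S/(L,t^{a_1},t^{a_2}))\}$, the maximum ranging over pairs of distinct squarefree monomials of $S$ of degree at most $d$. So the task splits into computing this last maximum and matching it with a single $F\in\mathcal{H}_{\prec,d,2}$.

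First I would record the closed form
$$\deg(S/(L,t^{a_1},t^{a_2}))=(q-1)^s-(q-2)^{e_1}(q-1)^{s-e_1}-(q-2)^{e_2}(q-1)^{s-e_2}+(q-2)^{e_1+e_2-c}(q-1)^{s-e_1-e_2+c},$$
obtained by counting the monomials $t^b$ with $0\leq b_i\leq q-2$ that are divisible by neither $t^{a_1}$ nor $t^{a_2}$ and applying inclusion--exclusion (equivalently, by iterating the colon-ideal computation of Lemma~\ref{apr5-19}); here $e_i=\deg t^{a_i}$, $c=|\mathrm{supp}(a_1)\cap\mathrm{supp}(a_2)|$, and $\mathrm{lcm}(t^{a_1},t^{a_2})$ is squarefree of degree $e_1+e_2-c\leq s$. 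With $x:=(q-2)/(q-1)\in[\tfrac12,1)$ this is $(q-1)^s\bigl(1-x^{e_1}-x^{e_2}+x^{e_1+e_2-c}\bigr)$, and from $c\leq\min(e_1,e_2)$ one gets the coarse bound $1-x^{e_1}-x^{e_2}+x^{e_1+e_2-c}\leq 1-x^{\min(e_1,e_2)}$. I would then maximize over admissible triples $1\leq e_1,e_2\leq d$, $0\leq c\leq\min(e_1,e_2)$, $e_1+e_2-c\leq s$, subject to the distinctness constraint that $e_1=e_2$ forces $c\leq e_1-1$: when $e_1=e_2=e$ the bracket is $\leq 1-2x^e+x^{e+1}=1-x^e(2-x)$, which is increasing in $e$ and so maximal at $e=\min(d,s-1)$; when $e_1\neq e_2$ the bound $1-x^{\min(e_1,e_2)}$ is attained by taking one monomial to divide the other, and is maximal at $\min(e_1,e_2)=d-1$. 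Since $\bigl(1-2x^d+x^{d+1}\bigr)-\bigl(1-x^{d-1}\bigr)=x^{d-1}(1-x)^2>0$, for $d<s$ the optimum is $1-2x^d+x^{d+1}$, realized by $e_1=e_2=d$, $c=d-1$ (which is why the bound needs $d+1\leq s$); for $d=s$ the $e_1=e_2$ case is capped at $e=s-1$ (there is only one squarefree monomial of degree $s$), and $\bigl(1-x^{s-1}\bigr)-\bigl(1-2x^{s-1}+x^s\bigr)=x^{s-1}(1-x)>0$, so the optimum is $1-x^{s-1}$, realized by $e_1=s$, $e_2=s-1$, $c=s-1$. Translating back, $\rho_I(d,2)=q(q-2)^d(q-1)^{s-d-1}$ for $d<s$ and $\rho_I(s,2)=(q-2)^{s-1}(q-1)$.

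For the matching upper bound I would take the polynomials that realize these extremal monomial pairs: $f_1=(t_1-1)\cdots(t_d-1)$ and $f_2=(t_1-1)\cdots(t_{d-1}-1)(t_{d+1}-1)$ when $d<s$, and $f_1=(t_1-1)\cdots(t_s-1)$ and $f_2=(t_1-1)\cdots(t_{s-1}-1)$ when $d=s$. In each case $f_1,f_2$ are squarefree with distinct initial monomials, so $F=\{f_1,f_2\}\in\mathcal{H}_{\prec,d,2}$, and Theorem~\ref{affine-zeros-formula} gives $\deg(S/(I,F))=|V_T(f_1)\cap V_T(f_2)|$. Writing each $V_T(f_j)$ as a union of the sets $\{t_i=1\}$ and using inclusion--exclusion yields $|T\setminus(V_T(f_1)\cap V_T(f_2))|=2(q-2)^d(q-1)^{s-d}-(q-2)^{d+1}(q-1)^{s-d-1}=q(q-2)^d(q-1)^{s-d-1}$ when $d<s$; when $d=s$ one has $V_T(f_2)\subseteq V_T(f_1)$, so the intersection equals $V_T(f_2)$ and $|T\setminus V_T(f_2)|=(q-2)^{s-1}(q-1)$. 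By Corollary~\ref{rth-GHW-squarefree} these give $\delta_2(\mathcal{C}_{\leq d})\leq q(q-2)^d(q-1)^{s-d-1}$, respectively $\leq(q-2)^{s-1}(q-1)$; combined with the footprint lower bound this is the claimed equality.

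The hard part will be the extremal analysis: one has to carry the distinctness restriction and the constraint $e_1+e_2-c\leq s$ carefully, since precisely these make the optimal configuration — and hence the value of $\delta_2$ — jump between the ranges $d<s$ and $d=s$. One must also check that the explicit $f_1,f_2$ chosen for the upper bound have initial monomials equal to the extremal pair, so that the inequality of Theorem~\ref{degree-initial-footprint-affine} is tight there and no gap survives; after that, everything reduces to the identities $2(q-1)-(q-2)=q$ and $(1-x)^2\geq 0$.
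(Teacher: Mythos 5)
Your proposal is correct and follows essentially the same route as the paper: the footprint lower bound of Corollary~\ref{rth-footprint-lower-bound-squarefree} combined with the inclusion--exclusion formula for $\deg(S/(L,t^{a_1},t^{a_2}))$, matched by the explicit pairs of products of factors $t_i-1$ via Corollary~\ref{rth-GHW-squarefree} and Proposition~\ref{squarefree-affine}. The only differences are cosmetic --- you determine the exact maximum of the footprint quantity via the substitution $x=(q-2)/(q-1)$ where the paper just verifies the required inequality in two subcases ($e=d$ and $e<d$), and your $f_2=(t_1-1)\cdots(t_{d-1}-1)(t_{d+1}-1)$ is a trivial variant of the paper's $(t_2-1)\cdots(t_{d+1}-1)$.
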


\demo Let $\prec$ be a graded monomial order and let $I=I(T)$
be the vanishing ideal of $T$. The initial ideal of $I$ is
$L=(\{t_i^{q-1}\}_{i=1}^{s})$
(Lemma~\ref{toric-squarefree-standard}). If $s=1$, then $d=s=1$ and
$\delta_2(\mathcal{C}_{\leq 1})=q-1$. Indeed, take $\{f_1,f_2\}$ in
$\mathcal{H}_{\prec,1,2}$ and notice that $(f_1,f_2)=S$, then 
$\deg(S/(t_1^{q-1}-1,f_1,f_2))=0$ and consequently, by
Corollary~\ref{rth-GHW-squarefree}, $\delta_2(\mathcal{C}_{\leq
1})=q-1$. Thus, we may assume $s\geq 2$.  

The support of a monomial 
$t^a$, denoted ${\rm supp}(t^a)$, is the set of
all $t_i$ that occur in $t^a$. Take
$\{t^a,t^b\}\in\mathcal{J}_{\prec,d,2}$, $t^a=\prod_{i=1}^st_i^{a_i}$
and $t^b=\prod_{i=1}^st_i^{b_i}$. We set $e=\deg(t^a)$,
$n=\deg(t^b)$, $A={\rm supp}(t^a)$, and $B={\rm supp}(t^b)$. 
We may assume $e\leq n\leq d$. As $t^a$ and $t^b$ are squarefree
monomials, one
has
\begin{equation*}
\prod_{i=1}^s\min\{q-1-a_i,q-1-b_i\}=
(q-2)^{|A\cup B|}(q-1)^{s-|A\cup B|}.
\end{equation*}
\quad Therefore, picking a new variable $u$
and setting $J=(L,t^a,t^b)$, from \cite[p.~343]{rth-footprint} and
using that $t^a,t^b$ are squarefree 
monomials, we obtain
\begin{eqnarray}\label{apr1-20}
\deg(S/J)=\deg(S[u]/JS[u])=(q-1)^s-(q-2)^e(q-1)^{s-e}\quad\quad\quad\quad& &\\
\quad\quad\quad -(q-2)^n(q-1)^{s-n}+
(q-2)^{|A\cup B|}(q-1)^{s-|A\cup B|}.& &\nonumber
\end{eqnarray}   

Case (a): Assume $d=s$. As $t^a,t^b$ are distinct squarefree
monomials, one has $e<s$. Indeed, if $e=s$, then $t^a=t_1\cdots t_s$,
$s=e\leq n\leq s$, and $t^a=t^b$, a contradiction. First we show the 
inequality $\delta_2(\mathcal{C}_{\leq d})\geq (q-2)^{s-1}(q-1)$. By
Corollary~\ref{rth-footprint-lower-bound-squarefree},
$\delta_2(\mathcal{C}_{\leq d})\geq\rho_{I}(d,2)$. Thus, it suffices to 
show the inequality $\rho_I(d,2)\geq(q-2)^{s-1}(q-1)$. Hence, we need
only show
\begin{equation*}
(q-1)^s-(q-2)^{s-1}(q-1)\geq \deg(S/J)=\deg(S/(L,t^a,t^b))
\end{equation*}   
for any $\{t^a,t^b\}$ in $\mathcal{J}_{\prec,d,2}$. Note that, by
Eq.~(\ref{apr1-20}), this inequality is equivalent to 
\begin{eqnarray*}
(q-2)^e(q-1)^{s-e}+(q-2)^n(q-1)^{s-n}\geq\quad\quad\quad\quad\quad\quad\quad& &\\ 
(q-2)^{s-1}(q-1)+(q-2)^{|A\cup B|}(q-1)^{s-|A\cup B|}\,.& & 
\end{eqnarray*}   
\quad That this inequality holds follows recalling that $e<s$
and noticing the next two inequalities
\begin{eqnarray*}
(q-2)^e(q-1)^{s-e}\geq(q-2)^{s-1}(q-1),\quad\quad\quad\quad\quad\quad\quad\quad& &\\
(q-2)^n(q-1)^{s-n}\geq 
(q-2)^{|A\cup B|}(q-1)^{s-|A\cup B|}\,.&&
\end{eqnarray*}
\quad Now, we show the inequality $\delta_2(\mathcal{C}_{\leq d})\leq
(q-2)^{s-1}(q-1)$. By Corollary~\ref{rth-GHW-squarefree} and
Theorem~\ref{affine-zeros-formula}, it suffices to find $\{f_1,f_2\}$ in
$\mathcal{H}_{\prec,s,2}$ such that 
$$\deg(S/(I,f_1,f_2))=|V_T(f_1,f_2)|=|V_T(f_1)\cap
V_T(f_2)|=(q-1)^s-(q-2)^{s-1}(q-1).$$ 
\quad Setting $f_1=(t_1-1)\cdots(t_s-1)$ and
$f_2=(t_2-1)\cdots(t_s-1)$, one has $|V_T(f_1,f_2)|=|V_T(f_2)|$. 
As $\deg(f_2)=d-1$ and $d=s$, by Proposition~\ref{squarefree-affine},
we get $$|V_T(f_2)|=(q-1)^s-(q-2)^{s-1}(q-1).$$ 
\quad Case (b): Assume $d<s$. First we show the
inequality $\delta_2(\mathcal{C}_{\leq d})\geq (q-2)^d(q-1)^{s-d-1}q$. By
Corollary~\ref{rth-footprint-lower-bound-squarefree},
$\delta_2(\mathcal{C}_{\leq d})\geq\rho_{I}(d,2)$. Thus, it suffices to 
show $\rho_I(d,2)\geq (q-2)^d(q-1)^{s-d-1}q$. Hence, we need
only show that the inequality
\begin{equation*}
(q-1)^s-(q-2)^d(q-1)^{s-d-1}q\geq \deg(S/J)
\end{equation*}   
holds for any $\{t^a,t^b\}$ in $\mathcal{J}_{\prec,d,2}$. Note that,
by Eq.~(\ref{apr1-20}), this inequality is equivalent to 
\begin{eqnarray}\label{apr1-20-1}
(q-2)^e(q-1)^{s-e}+(q-2)^n(q-1)^{s-n}\geq\quad\quad\quad\quad\quad\quad\quad & &\\ 
(q-2)^d(q-1)^{s-d-1}q+(q-2)^{|A\cup B|}(q-1)^{s-|A\cup B|}.& &\nonumber 
\end{eqnarray}   
\quad To show this inequality we consider two subcases.

($\mathrm{b}_1$): Assume $e=d$. Then, $e=n=d$, and
Eq.~(\ref{apr1-20-1}) is equivalent to
$$
(q-2)^{d+1}(q-1)^{s-d-1}\geq 
(q-2)^{|A\cup B|}(q-1)^{s-|A\cup B|}. 
$$
\quad This inequality follows by noticing that $|A\cup B|\geq d+1$. Indeed, if $|A\cup B|=d$, then
$A=B$ and $t^a=t^b$, a contradiction. 

($\mathrm{b}_2$): Assume $e<d$. Setting $k=d-e$, one has
$(q-1)^{k+1}\geq(q-2)^kq$ because $k\geq 1$. This inequality is easy
to show using induction on $k\geq 1$. That
Eq.~(\ref{apr1-20-1}) holds true now follows from the following two inequalities
\begin{eqnarray*}
(q-2)^e(q-1)^{s-e}\geq(q-2)^d(q-1)^{s-d-1}q,\quad\quad\quad\quad& &\\ 
(q-2)^n(q-1)^{s-n}\geq(q-2)^{|A\cup B|}(q-1)^{s-|A\cup B|}.&& 
\end{eqnarray*}
\quad Finally, we show the inequality $\delta_2(\mathcal{C}_{\leq d})\leq
(q-2)^d(q-1)^{s-d-1}q$. By Corollary~\ref{rth-GHW-squarefree} and
Theorem~\ref{affine-zeros-formula}, it suffices to find $\{f_1,f_2\}$ in
$\mathcal{H}_{\prec,d,2}$ such that 
$$\deg(S/(I,f_1,f_2))=|V_T(f_1,f_2)|=|V_T(f_1)\cap
V_T(f_2)|=(q-1)^s-(q-2)^d(q-1)^{s-d-1}q.$$ 
\quad Setting $f_1=(t_1-1)\cdots(t_d-1)$, 
$f_2=(t_2-1)\cdots(t_{d+1}-1)$, and $g=(t_1-1)\cdots(t_{d+1}-1)$, one has $|V_T(f_1)\cup
V_T(f_2))|=|V_T(g)|$ and, 
by Proposition~\ref{squarefree-affine}, we get
\begin{eqnarray*}
|V_T(f_1,f_2)|&=&|V_T(f_1)|+|V_T(f_2)|-|V_T(g)|\\
&=&2((q-1)^s-(q-2)^d(q-1)^{s-d})-((q-1)^s-(q-2)^{d+1}(q-1)^{s-d-1})\\
&=&(q-1)^s-(q-2)^d(q-1)^{s-d-1}q.\quad\Box
\end{eqnarray*}

\section{Examples}\label{examples-section}
This section includes examples illustrating some of our
results. In Appendix~\ref{Appendix} we give the implementations in
\textit{Macaulay}$2$ \cite{mac2} and \textit{Magma}
\cite{magma} that are used in the examples.

\begin{example}\label{transforming-example} 
Let $K$ be the field $\mathbb{F}_5$, let $S=K[t_1,t_2]$ be a
polynomial ring in two variables, and let $\mathcal{L}$ be the
$K$-linear space generated by all monomials $t_1^{a_1}t_2^{a_2}$ such
that $(a_1,a_2)$ is one of the solid points of the point configuration
depicted on the left of Figure~\ref{transforming-figure}. The
evaluation code $\mathcal{L}_T$, over the torus
$T=(\mathbb{F}_5^*)^2$, is a generalized toric code in the sense 
of \cite{Little,Ruano}. Let $\prec$ be a monomial order. 
The vanishing ideal $I$ of $T$ is generated by
the Gr\"obner basis $G=\{t_1^4-1,t_2^4-1\}$. 
Then, $\mathcal{L}$ is generated by 
$$ 
B=\{1,\, t_1^3,\, t_1t_2^2,\, t_2^3,\, t_1t_2,\, t_1^2t_2^4\},
$$
and the list of remainder of the elements of $B$ on division by
$G$ is
$$
\widetilde{B}=\{1,\, t_1^3,\, t_1t_2^2,\, t_2^3,\, t_1t_2,\, t_1^2\},
$$
that is, $\widetilde{\mathcal{L}}$ is generated by 
$\widetilde{B}$ and $\widetilde{\mathcal{L}}_T$ is a standard evaluation
code on $T$ relative to $\prec$. Using Theorem~\ref{GHW-standard-code}, 
Proposition~\ref{transforming-to-standard-form}, 
Theorem~\ref{rth-footprint-lower-bound}, and
Procedure~\ref{transforming-procedure}, we obtain that the minimum
distance $\delta_1(\mathcal{L}_T)$ of ${\mathcal{L}}_T$ is $8$ and the
footprint ${\rm fp}_1(\mathcal{L}_X)$ is $4$. The length and the
dimension of $\mathcal{L}_X$ are $16$ and $6$, respectively.
\begin{figure}[H]
\begin{tabular}{ccc}
\begin{tikzpicture}[scale = 0.75]
\begin{axis}[axis lines=center, enlargelimits=false, 
 xmin=-0.5,
  xmax=4,
  ymin=-0.5,
  ymax=4,
  title={}]
  \addplot [only marks, mark size=2.5] table {
  0 0
  0 3
  1  1
  1  2
  2  4
  3  0
};
\addplot [only marks, mark=o, mark size=2.5] table {
 1  3
 2  3
 2  2
 2  1
 0  1
 0  2
 1  0
 2  0
};
\addplot [domain=0:2, samples=2]{(1/2)*x+3};
\addplot [domain=2:3, samples=2]{-4*x+12};
 \end{axis}
\end{tikzpicture}
\xygraph{
 !{<0cm,.5cm>;<.5cm,0cm>;<0cm,.5cm>} 
 !{(0,2)}*{>}="v3"
 !{(-1,.3)}*{}="v2"
 !{(-1,2)}*{}="v1"
 !{(-1.3,2)}*{\text{{\Large$\mathcal{L}$}}}
 !{(0.4,2)}*{\text{{\Large$\tilde{\mathcal{L}}$}}}
 "v1"-"v3"
}&&\begin{tikzpicture}[scale = 0.75]
\begin{axis}[axis lines=center, enlargelimits=false, 
 xmin=-0.5,
  xmax=4,
  ymin=-0.5,
  ymax=4,
  title={}]
  \addplot [only marks, mark size=2.5] table {
  0 0
  0 3
  1  1
  1  2
  2  0
  3  0
};
\addplot [only marks, mark=o, mark size=2.5] table {
 2  1
 1  0
 0  1
 0  2
};
\addplot [domain=0:1, samples=2]{-1*x+3};
\addplot [domain=1:3, samples=2]{-1*x+3};
\end{axis}

\end{tikzpicture}
\end{tabular}
\caption{Lattice points defining $\mathcal{L}$ and
$\widetilde{\mathcal{L}}$, respectively.}\label{transforming-figure}
\end{figure}
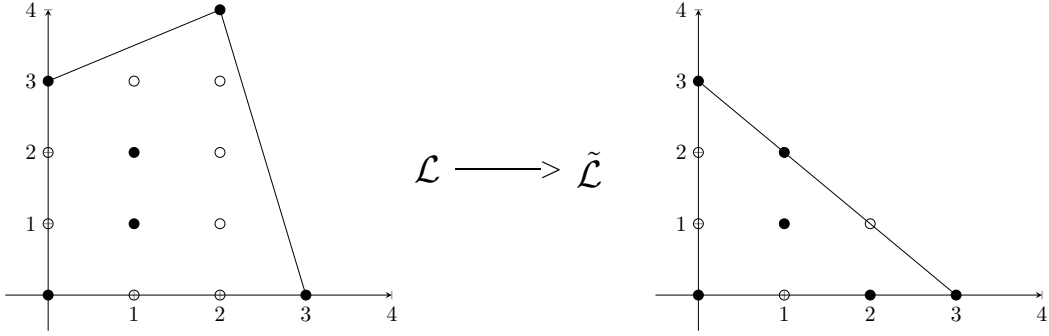
\end{example}

\begin{example}\label{5points-in-A2}
Let $K$ be the field $\mathbb{F}_3$, let
$X=\{(0,0),(1,0),(0,1),(1,1),(0,-1)\}$ be a set of $5$
points on $\mathbb{A}^2$, and let $I=I(X)$ be the vanishing ideal of $X$. The ideal
$I$ is generated by the binomials $t_1^2-t_1,t_2^3-t_2,t_1t_2^2-t_1t_2$. Using
Corollary~\ref{rth-GHW-affine} and Procedure~\ref{5points-in-A2-procedure}, we obtain 
$$
\begin{array}{lllll}
\delta_1(C_X(1))=2,&\delta_2(C_X(1))=4,&\delta_3(C_X(1))=5,& &\\
\delta_1(C_X(2))=1,&\delta_2(C_X(2))=2,&\delta_3(C_X(2))=3,&\delta_4(C_X(2))=4,&\delta_5(C_X(2))=5,
\end{array}
$$
$H_I^a(1)=\dim_K(C_X(1))=3$, and $H_I^a(2)=\dim(C_X(2)=5$.
\end{example}

\begin{example}\label{lex-example} 
Let $S=\mathbb{Q}[x,y]$ be a polynomial ring and let $\prec$ be the
lexicographical order with $x\succ y$. If $I=(g_1,g_2)$ is the ideal generated by
$g_1=y^{40}-y^2+1$ and $g_2=x-y^8$, and $f$ is the polynomial
$x^2-y^3+y$, then $f\in S_{\leq 3}$
but the residue of the division of $f$ by $\{g_1,g_2\}$ is
$h=y^{16}-y^3+y$ and $h\notin S_{\leq 3}$ (see
Procedure~\ref{lex-example-procedure}). This example shows that the
proof of Corollary~\ref{rth-GHW-affine} is only valid for graded orders.
\end{example}

\begin{example}\label{12points-A3-F3}
Let $K$ be the field $\mathbb{F}_3$ and let $X$ be the following
set of $12$ points in $\mathbb{A}^3$:
$$
\begin{array}{llllll}
\{(1,0,0), &(1,0,1), &(1,0,-1), &(1,1,0),&(1,1,1), &(1,1,-1),\\
\, \ (0,0,0), &(0,0,1),&(0,0,-1), &(0,1,0), &(0,1,1), &(0,1,-1)\}.
\end{array}
$$
\quad Using Corollary~\ref{min-dis-affine} and
Procedure~\ref{12points-A3-F3-procedure}, we obtain that ${\rm
reg}(H^a_{I(X)})=4$ and the minimum distance $\delta(C_X(d))$ of
$C_X(d)$ is given
by 
$$
\delta(C_X(1))=6,\ \delta(C_X(2))=3,\ \delta(C_X(3))=2,\
\delta(C_X(4))=1.
$$
\end{example}

\begin{example}\label{affine-torus-A2-F5}\rm Let $K$ be the field
$\mathbb{F}_{5}$, let $T$ be the affine torus
$(\mathbb{F}_5^*)^2$, and let $I=I(T)$ be the vanishing ideal of $T$. Using Theorem~\ref{rth-footprint-lower-bound}
and Procedure~\ref{affine-torus-A2-F5-procedure}, we obtain the
inequality ${\rm fp}_{I}(d,r)\leq\delta_r(C_T(d,r))$ and the
following table: 
\begin{eqnarray*}
\hspace{-11mm}&&\left.
\begin{array}{c|c|c|c|c|c|c}
d & 1 & 2 & 3 & 4 & 5 &6\\
   \hline
 H_I^a(d)    \    & 3 & 6 & 10&13 &15
 &16 \\ 
   \hline
{\rm fp}_{I}(d,1) &12&8& 4& 3 & 2&1 \\ 
 \hline
 {\rm fp}_{I}(d,2) & 15 & 11 & 7 & 4 & 3 & 2 \\
\hline
 {\rm fp}_{I}(d,3) & 16 & 12 & 8 & 6 & 4 & 3
\end{array}
\right.
\end{eqnarray*}
\end{example}

\begin{example}\label{10points-P3-F3}
Let $S=K[t_1,t_2,t_3]$ be a polynomial ring over the field
$K=\mathbb{F}_3$, let $X$ be the following 
set of $10$ points in $\mathbb{A}^3$:
$$
\begin{array}{l}
\{(1,0,0),\ (1,0,1),\ (1,0,-1),\ (1,1,0),\ (1,1,1),\ (1,1,-1),\ (0,0,1),\\
\ \,(0,1,0),\ (0,1,1),\ (0,1,-1)\},
\end{array}
$$
let $\mathcal{L}$ be the linear space $S_d$, and let $\mathcal{L}_X$
be the evaluation code on $X$. Using
Theorem~\ref{GHW-standard-code},
Proposition~\ref{transforming-to-standard-form}, and 
Procedure~\ref{10points-P3-F3-procedure}, we obtain the following
information:
$$
I(X)=(t_2^2-t_2,\,t_1^2-t_1,\,t_3^3-t_3,\,t_1t_2t_3-t_1t_2-t_1t_3-t_2t_3+t_1+t_2+t_3-1),
$$
if $d=2$ the linear space $\widetilde{\mathcal{L}}$ of
Proposition~\ref{transforming-to-standard-form} is generated by
$\{t_2,\,t_1t_2,\,t_1t_3,\,t_3^2,\,t_2t_3,\,t_1\}$,
the minimum distance $\delta(\mathcal{L}_X)$ of
$\mathcal{L}_X$ is given
by 
$$
\delta(\mathcal{L}_X)=6\mbox{ if }d=1,\ \delta(\mathcal{L}_X)=3\mbox{
if }d=2,\ \delta(\mathcal{L}_X)=1\mbox{ if }d=3,
$$
and $\delta_2(\mathcal{L}_X)=9+d-2$ for $d=2,3$.
\end{example}

\begin{example}\label{Hermitian-curve-example} 
Let $S=K[x,y]$ be a polynomial ring over the finite field
$K=\mathbb{F}_{25}$ with $25$ elements, let
$X=V_{\mathbb{A}^2}(g)$ be the 
Hermitian curve defined by $g=y^5+y-x^6$ \cite[p.~242]{tsfasman} and
let $I=I(X)$ be its vanishing ideal. If $X\neq\emptyset$, by 
Proposition~\ref{Nullstellensatz-finite}, the vanishing ideal of $X$ can be
computed using the equality
$$
I(X)=(x^{25}-x,\,y^{25}-y,\,g).
$$
\quad To compute the length of the Reed--Muller-type code $C_X(d)$
 of degree $d$ over the curve $X$, we use \textit{Macaulay}$2$ \cite{mac2} to obtain 
$$
|X|=|V_{\mathbb{A}^2}(g)|=\deg(S/I(X))
=\deg(\mathbb{F}_{25}[x,y]/(x^{25}-x,\,y^{25}-y,\,g))=125.
$$
\quad We can compute the minimum distance of $C_X(d)$ using
Proposition~\ref{Nullstellensatz-finite}, Corollary~\ref{min-dis-affine} and
Procedure~\ref{12points-A3-F3-procedure}. The $r$-th generalized Hamming
weight $\delta_r(C_X(d))$ and $r$-th footprint ${\rm
fp}_I(d,r)$ of $C_X(d)$ can be computed using 
Procedures~\ref{5points-in-A2-procedure} and \ref{affine-torus-A2-F5-procedure}. 
If $d=1$, we obtain a linear code $C_X(1)$ of length $125$,
dimension $3$, and minimum distance $119$. 
If $d=4$, using the footprint lower bound of
Corollary~\ref{rth-footprint-lower-bound-Reed-Muller}, we obtain a linear 
code $C_X(4)$ of length $125$, dimension $15$, minimum distance
at least $40$ because ${\rm fp}_I(4,1)=40$, and $\delta_7(C_X(4)\geq
97$ because ${\rm fp}_I(4,7)=97$. We can use this example as a model 
to estimate the parameters of $C_X(d)$ for any affine variety
$X=V_{\mathbb{A}^s}(G)$ 
defined by a given finite set $G$ of polynomials of $S$ in $s\geq 2$ 
variables, by replacing $g$ by
$G$.
\end{example}

\begin{example}\label{elliptic-curve-example} Let
$C=V_{\mathbb{A}^2}(f)\cup\{\mathcal{O}\}$ be the elliptic 
curve of the polynomial $f=y^2-x^3+x$ 
over a finite field $K=\mathbb{F}_q$ of ${\rm char}(K)\neq 2$. 
Elliptic curves have a group structure and they are 
used in cryptography \cite{Koblitz}. If $K=\mathbb{F}_{71}$, to
compute the number of zeros of 
$f$ in $\mathbb{A}^2$ notice that
$I(\mathbb{A}^2)=(x^{71}-x,y^{71}-y)$ \cite[p.~137]{JacI}. Then, using
Lemma~\ref{degree-formula-zeros-affine} and
\textit{Macaulay}$2$ \cite{mac2}, we obtain  
$$
|V_{\mathbb{A}^2}(f)|=\textrm{deg}(\mathbb{F}_{71}[x,y]/(I(\mathbb{A}^2),f))=71,
$$
see Procedure~\ref{elliptic-curve-procedure}. If we apply 
Theorem~\ref{degree-initial-footprint-affine} with the lexicographical order $y\succ x$, we obtain 
$$
|V_{\mathbb{A}^2}(f)|=\deg(S/(x^{71}-x,y^{71}-y,y^2-x^3+x))\leq\deg(S/(y^2,x^{71}))=142.
$$
\quad For a polynomial $g$ defining an elliptic curve over
a finite field $\mathbb{F}_q$, the bound of
Theorem~\ref{degree-initial-footprint-affine} says that
$|V_{\mathbb{A}^2}(g)|\leq 2q$, which is not a good bound 
(cf. Hasse's theorem \cite[p.~174]{Koblitz}). 

Let $X=V_{\mathbb{A}^2}(f)$ be the affine variety of $f$ and let
$I=I(X)$ be its vanishing ideal. We can
compute the minimum distance of the Reed--Muller-type code $C_X(d)$,
over the elliptic curve $X$, using
Proposition~\ref{Nullstellensatz-finite}, together with  
Corollary~\ref{min-dis-affine} and
Procedure~\ref{12points-A3-F3-procedure}. The $r$-th footprint of $C_X(d)$
can be computed using Procedure~\ref{affine-torus-A2-F5-procedure}. 
If $q=5$ and $d=1$, we obtain a linear code $C_X(1)$ of length $7$,
dimension $3$, minimum distance $4$, and the $1$-st footprint ${\rm
fp}_I(1,1)$ of $C_X(1)$ is $4$. 
If $q=71$ and $d=1$, we obtain a linear
code $C_X(1)$ of length $71$, dimension $3$, and minimum distance
$68$.  If $q=199$ and $d=10$, we obtain a
linear code $C_X(10)$ of length $199$, dimension $30$, and minimum
distance at least $57$ because ${\rm fp}_I(10,1)=57$. 
\end{example}

\begin{example}\label{Cox-example} 
Let $S=K[x,y,z]$ be a polynomial ring over the finite field
$K=\mathbb{F}_4$, let $\mathbb{P}^2$ be the projective space over the
field $K$, and let $\mathbb{X}=V_{\mathbb{P}^2}(g)$ be the
projective variety of the homogeneous polynomial 
$g=y^3+xz^2+x^2z$. Then, by
Theorem~\ref{Nullstellensatz-finite-projective}, the homogeneous 
vanishing ideal $I(\mathbb{X})$ of $\mathbb{X}$ is given by 
$$
I(\mathbb{X})={\rm rad}(xy^4-x^4y,\,xz^4-x^4z,\,yz^4-y^4z,\, g).
$$
\quad Computing the radical on the right with \textit{Magma} 
\cite{magma} (Procedure~\ref{Cox-procedure}) gives
\begin{equation}\label{may15-20}
I(\mathbb{X})=(x^4z+xz^4,\,x^2y+xyz+yz^2,\,x^2z+xz^2+y^3).
\end{equation}
\quad If $\mathcal{L}=S_1$, using Eq.~(\ref{may15-20}) and
\cite[Procedure~7.1, p.~334]{rth-footprint},  
we obtain that the standard evaluation code $\mathcal{L}_{\mathbb{X}}$ on  
$\mathbb{X}$ has length $9$, dimension $3$, and minimum distance $6$.
Note that we cannot apply Procedure~\ref{10points-P3-F3-procedure} since
$I(\mathbb{X})$ is not an affine vanishing ideal. 
\end{example}

\begin{example}\label{example-ptc} For $s=4$ and $q=3$, by
Theorem~\ref{Notre-Dame-Cathedral-Apr15-2019}, the list of values of the length,
dimension and minimum distance of $\mathcal{C}_\mathcal{P}(d)$ 
are given by the following table.
\begin{eqnarray*}
\hspace{-11mm}&&\left.
\begin{array}{c|c|c|c|c}
d & 1 & 2 & 3 & 4\\
   \hline
m & 16 & 16 & 16 & 16\\
   \hline
\dim_K(\mathcal{C}_\mathcal{P}(d)) & 4 & 6&  4& 1
 \\ 
   \hline
 \delta(\mathcal{C}_\mathcal{P}(d))   \    & 8 & 4  & 8&16\\
\end{array}
\right.
\end{eqnarray*}
\end{example}
\begin{appendix}

\section{Procedures for {\it Macaulay\/}$2$}\label{Appendix}

In this section we give 
procedures for \textit{Macaulay}$2$ \cite{mac2} and \textit{Magma}
\cite{magma} using finite fields to compute generalized
Hamming weights and lower bound footprints of evaluation codes.

\begin{procedure}\label{elliptic-curve-procedure} Computing the number of points
of an affine variety over a finite field using
Lemma~\ref{degree-formula-zeros-affine}.  This
procedure correspond to Example~\ref{elliptic-curve-example}. To
compute other examples just 
change the finite field and the set of polynomials $F$ that define the affine variety.  
\begin{verbatim}
q=71
S=ZZ/q[x,y]-- finite field K=ZZ/q 
I=ideal(x^q-x,y^q-y)--vanishing ideal of K^2 
F={y^2-x^3+x}
quotient(I,ideal(F))==I--false means F has zeros
degree (I+ideal(F))--number of zeros of f
\end{verbatim}
\end{procedure}

\begin{procedure}\label{transforming-procedure} 
Computing the generalized Hamming weights and footprint of an
evaluation code  
$\mathcal{L}_X$ using Theorem~\ref{GHW-standard-code}, 
Proposition~\ref{transforming-to-standard-form}, and
Theorem~\ref{rth-footprint-lower-bound}.  
The input for this
procedure is a generating set for $\mathcal{L}$ and the vanishing 
ideal $I$ of $X$. This procedure 
corresponds to Example~\ref{transforming-example}.
\begin{verbatim}
q=5, H=GF(q,Variable=>a), S=H[t1,t2]
I=ideal(t1^(q-1)-1,t2^(q-1)-1), M=coker gens gb I
r=1--we are computing the r-th generalized Hamming weight 
G=gb I, div=(x)->x % G
--This is the K-basis for L
Basis=matrix{{1,t1^3,t1*t2^2,t2^3,t1*t2,t1^2*t2^4}}
--This is the list of remainders on division by G
cL=toList set apply(flatten entries Basis,div)
--This gives the r-th generalized Hamming weight
gmd=degree M-max apply(apply(subsets(toList set apply(toList 
set(apply(apply(apply(apply(toList 
((set(0,a,a^2,a^3,a^4))^**(#cL)-(set{0})^**(#cL))/deepSplice,
toList),x->matrix{cL}*vector x),entries),n->n#0)),
m->(leadCoefficient(m))^(-1)*m),r),ideal),
x-> if #(set flatten entries leadTerm gens x)==r 
then degree(I+x) else 0)
init=ideal(leadTerm gens gb I)
er=(x)-> degree ideal(init,x)
--This is the r-th footprint
fpr=degree M - max apply(apply(apply(subsets(cL,r), 
toSequence),ideal),er) 
\end{verbatim}
\end{procedure}

\begin{procedure}\label{5points-in-A2-procedure} Computing the
generalized Hamming weights of an affine Reed--Muller-type code using
Corollary~\ref{rth-GHW-affine}. This
procedure correspond to Example~\ref{5points-in-A2}. Other examples
can be computed changing the finite field, the affine space, and the
set of points of $X$.   
\begin{verbatim}
q=3, G=ZZ/q, S=G[t1,t2];
I1=ideal(t1,t2), I2=ideal(t1-1,t2),I3=ideal(t1,t2-1)
I4=ideal(t1-1,t2-1),I5=ideal(t1,t2+1)
I=intersect(I1,I2,I3,I4,I5)--this is the vanishing ideal
M=coker gens gb I
--This is the r-th generalized Hamming weight of C_X(d):
genmdaffine=(d,r)->degree M-max apply(apply(subsets(toList 
set apply(toList set(apply(apply(apply(apply(toList 
((set(0..q-1))^**(#flatten entries basis(0,d,M))-
(set{0})^**(#flatten entries basis(0,d,M)))
/deepSplice,toList),x->basis(0,d,M)*vector x),
entries),n->n#0)), m->(leadCoefficient(m))^(-1)*m),r),
ideal), x-> if #(set flatten entries leadTerm gens x)==r 
then degree(I+x) else 0)
--This is the affine Hilbert function of I:
#flatten entries basis(0,1,M), #flatten entries basis(0,2,M)
genmdaffine(1,1), genmdaffine(1,2),genmdaffine(1,3)
genmdaffine(2,1), genmdaffine(2,2)
\end{verbatim}
\end{procedure}

\begin{procedure}\label{lex-example-procedure}
Computing the quotient and the remainder in the multivariate division
algorithm \cite[Theorem~3, p.~63]{CLO}. This procedure correspond to Example~\ref{lex-example}
\begin{verbatim}
R=QQ[x,y,MonomialOrder=>Lex]
I=ideal(y^(40)-y^2+1,x-y^8)
f=x^2-y^3+y
G=matrix{{y^(40)-y^2+1,x-y^8}}
remainder(matrix{{f}},G)
quotientRemainder(matrix{{f}},G)
\end{verbatim}
\end{procedure}

\begin{procedure}\label{12points-A3-F3-procedure}
Computing the minimum distance of an affine Reed--Muller-type code of
degree $d$ using Corollary~\ref{min-dis-affine}. This procedure
corresponds to Example~\ref{12points-A3-F3} 
\begin{verbatim}
q=3, S=ZZ/q[t1,t2,t3];
I1=ideal(t1-1,t2,t3), I2=ideal(t1-1,t2,t3-1),I3=ideal(t1-1,t2,t3+1),
I4=ideal(t1-1,t2-1,t3),I5=ideal(t1-1,t2-1,t3-1),I6=ideal(t1-1,t2-1,t3+1),
I7=ideal(t1,t2,t3), I8=ideal(t1,t2,t3-1),I9=ideal(t1,t2,t3+1),
I10=ideal(t1,t2-1,t3),I11=ideal(t1,t2-1,t3-1),I12=ideal(t1,t2-1,t3+1)
I=intersect(I1,I2,I3,I4,I5,I6,I7,I8,I9,I10,I11,I12)
M=coker gens gb I
--This computes the minimum distance of an affine Reed-Muller-type code
--of degree d
mindisaffine=(d)-> degree M- max apply(apply((toList (set
(apply(toList set apply(toList set(apply(apply(apply(apply(toList 
((set(0..q-1))^**(#flatten entries basis(0,d,M))-
(set{0})^**(#flatten entries basis(0,d,M)))/deepSplice,toList),
x->basis(0,d,M)*vector x),entries),n->n#0)),
m->(leadCoefficient(m))^(-1)*m),x-> if degree(x)=={d} then x 
else t1^0))-set{t1^0})),ideal),x-> degree(I+x))
mindisaffine(1), mindisaffine(2), mindisaffine(3)
\end{verbatim}
\end{procedure}

\begin{procedure}\label{affine-torus-A2-F5-procedure} Computing the
footprint of a Reed--Muller-type code $C_X(d)$. This procedure corresponds to
Example~\ref{affine-torus-A2-F5}. To compute other examples just
change the finite field and the vanishing ideal $I$ of $X$. 
\begin{verbatim}
q=5, G=ZZ/q, S=G[t1,t2];
I=ideal(t1^(q-1)-1,t2^(q-1)-1)--vanishing ideal of the affine torus T
M=coker gens gb I
init=ideal(leadTerm gens gb I)
er=(x)-> if not quotient(init,x)==init then degree ideal(init,x) else 0
--This is the r-th footprint:
fpraffine=(d,r)->degree M - max apply(apply(apply(subsets
(flatten entries basis(0,d,M),r),toSequence),ideal),er)
f=(n)->#flatten entries basis(0,n,M), apply(1..6,f)
f1=(n)->fpraffine(n,1),apply(1..6,f1)
f2=(n)->fpraffine(n,2),apply(1..6,f2)
f3=(n)->fpraffine(n,3),apply(1..6,f3)
\end{verbatim}
\end{procedure}

\begin{procedure}\label{Cox-procedure} Computing the radical of an
ideal over a finite 
field using \textit{Magma} \cite{magma}. This procedure corresponds to
Example~\ref{Cox-example}.
\begin{verbatim}
P<x,y,z>:=PolynomialRing(FiniteField(2, 2),3);
J:= ideal<P| x*y^4-x^4*y,x*z^4-x^4*z,y*z^4-y^4*z,y^3+x*z^2+x^2*z>;
Radical(J);
\end{verbatim}
\end{procedure}

\begin{procedure}\label{10points-P3-F3-procedure}
Given an evaluation code $\mathcal{L}_X$ on $X$ and a monomial order
$\prec$. This procedure computes a linear subspace
$\widetilde{\mathcal{L}}$ of $S$ such that 
$\widetilde{\mathcal{L}}_X$ is a standard evaluation code on $X$ 
and $\widetilde{\mathcal{L}}_X=\mathcal{L}_X$ 
(Proposition~\ref{transforming-to-standard-form}). Then it computes
the $r$-generalized 
Hamming weight of a projective 
Reed--Muller-type code on $X$ of degree $d$
(Theorem~\ref{GHW-standard-code}). This procedure corresponds to 
Example~\ref{10points-P3-F3}. To compute $\delta_r(\mathcal{L}_X)$ for
an evaluation code $\mathcal{L}_X$ replace \texttt{basis(d,S)} by the
matrix of a $K$-basis of the linear space $\mathcal{L}$.
\begin{verbatim}
q=3, S=ZZ/q[t1,t2,t3];
I1=ideal(t1-1,t2,t3),I2=ideal(t1-1,t2,t3-1),I3=ideal(t1-1,t2,t3+1),
I4=ideal(t1-1,t2-1,t3),I5=ideal(t1-1,t2-1,t3-1),
I6=ideal(t1-1,t2-1,t3+1),I7=ideal(t1,t2,t3-1),I8=ideal(t1,t2-1,t3),
I9=ideal(t1,t2-1,t3-1),I10=ideal(t1,t2-1,t3+1)
I=intersect(I1,I2,I3,I4,I5,I6,I7,I8,I9,I10)
M=coker gens gb I
d=2, r=1, G=gb I
div=(x)->x % G
--This is the list of residues of S_d after division by G
cL=toList set apply(flatten entries basis(d,S),div)
--This gives the r-th generalized Hamming weight of 
--the evaluation code on X defined by S_d
gmd=degree M-max apply(apply(subsets(toList set apply(toList 
set(apply(apply(apply(apply(toList ((set(0..q-1))^**(#cL)-
(set{0})^**(#cL))/deepSplice,toList),x->matrix{cL}*vector x),
entries),n->n#0)),m->(leadCoefficient(m))^(-1)*m),r),ideal),
x-> if #(set flatten entries leadTerm gens x)==r 
then degree(I+x) else 0)
\end{verbatim}
\end{procedure}
\end{appendix}


\section*{Acknowledgments} 
We thank Nupur Patanker for pointing out an error in the previous 
statement of Corollary~\ref{case-s-2d} and
Proposition~\ref{squarefree-affine}. 
Computations with \textit{Magma} \cite{magma} and 
\textit{Macaulay}$2$ \cite{mac2} were important to 
give examples and to have a better understanding of evaluation
codes. 

\bibliographystyle{plain}

\begin{thebibliography}{10}


\bibitem{aigner} M. Aigner, {\it Combinatorial Theory\/}, Springer,
1997. 

\bibitem{Ballico-Elia-Sala} E. Ballico, M. Elia and M. Sala, 
On the evaluation of multivariate polynomials over finite fields, 
J. Symbolic Comput. {\bf 50} (2013), 255--262. 

\bibitem{Becker-Weispfenning} T. Becker and V. Weispfenning, \textit{Gr\"obner bases A
Computational Approach to Commutative Algebra}, in cooperation with
Heinz Kredel,  Graduate Texts in Mathematics {\bf 141},
Springer-Verlag, New York, 1993. 

\bibitem{Beelen-RM} P. Beelen, A note on the generalized Hamming
weights of Reed-Muller codes, Appl. Algebra Engrg. Comm. Comput. {\bf
30} (2019), no. 3, 233--242.

\bibitem{GHWCartesian} P. Beelen and M. Datta, 
Generalized Hamming weights of affine Cartesian codes, 
Finite Fields Appl. {\bf 51} (2018), 130--145. 

\bibitem{magma} W. Bosma, J. Cannon, and C. Playoust, The Magma 
algebra system. I. The user language, J. Symbolic Comput. {\bf 24}
(1997), 235--265.  

\bibitem{carvalho} C.  Carvalho, On the second Hamming weight of some
Reed-Muller type codes, Finite Fields Appl. {\bf 24} (2013), 
88--94.

\bibitem{carvalho-lopez-lopez} C. Carvalho, V. G. Lopez Neumann and  H.
H. L\'opez, Projective nested cartesian codes, Bull. Braz. Math. Soc.
(N.S.) {\bf 48} (2017), no. 2, 283--302.

\bibitem{min-dis-generalized} S. M. Cooper, A. Seceleanu, S. O. Toh\v{a}neanu,
M. Vaz Pinto and R. H. Villarreal, 
Generalized minimum distance functions and algebraic invariants of
Geramita ideals, Adv. in Appl. Math. {\bf 112} (2020), 101940.

\bibitem{CLO} D. Cox, J. Little and D. O'Shea, {\it Ideals, 
Varieties, and Algorithms\/}, Springer-Verlag, 1992.

\bibitem{duursma-renteria-tapia} I. M. Duursma, C. Renter\'\i a and
H. Tapia-Recillas,  
Reed-Muller codes on complete intersections, Appl. Algebra Engrg.
Comm. Comput.  {\bf 11}  (2001),  no. 6, 455--462.



\bibitem{geil-2008} O. Geil, Evaluation codes from an affine variety
code perspective, Advances in algebraic geometry codes, 153--180, 
Ser. Coding Theory Cryptol., 5, World Sci. Publ., Hackensack, NJ,
2008.

\bibitem{geil-hoholdt} 
O. Geil and T. H{\o}holdt, 
Footprints or generalized Bezout's theorem, 
IEEE Trans. Inform. Theory {\bf 46}  (2000),  no. 2, 635--641.

\bibitem{geil-pellikaan} O. Geil and R. Pellikaan, 
On the structure of order domains, Finite Fields Appl. {\bf 8}
(2002), no. 3, 369--396. 


\bibitem{geramita-cayley-bacharach} A. V. Geramita, M. Kreuzer and L.
Robbiano, 
Cayley-Bacharach schemes and their canonical modules, Trans. Amer.
Math. Soc. {\bf 339}  (1993),  no. 1, 163--189. 

\bibitem{Ghorpade} S. Ghorpade, A note on Nullstellensatz over finite fields,
Contributions in algebra and algebraic geometry, 23--32, 
Contemp. Math., 738, Amer. Math. Soc., Providence, RI, 2019. 

\bibitem{rth-footprint}
 M. Gonz\'alez-Sarabia, J. Mart\'\i nez-Bernal, R. H. Villarreal and
 C. E. Vivares, Generalized minimum distance functions, 
J. Algebraic Combin. {\bf 50} (2019), no. 3, 317--346. 

\bibitem{camps-sarabia-sarmiento-vila} 
M. Gonz\'alez-Sarabia, E. Camps, E. Sarmiento and R. H. Villarreal, 
The second generalized Hamming weight of some evaluation codes
arising from a 
projective torus, Finite Fields Appl. {\bf 52} (2018), 370--394.

\bibitem{GRH} M. Gonz\'alez-Sarabia, C. Renter\'\i a and 
M. Hern\'andez de la Torre, 
Minimum distance and second generalized Hamming weight of two
particular linear codes, Congr. Numer. {\bf 161} (2003), 105--116. 


\bibitem{GRT} M. Gonz\'alez-Sarabia, C. Renter\'\i a and H.
Tapia-Recillas, Reed-Muller-type codes over the Segre variety,  
Finite Fields Appl. {\bf 8}  (2002),  no. 4, 511--518. 

\bibitem{mac2} D. Grayson and M. Stillman, 
{\em Macaulay\/}$2$, 1996. 
Available via anonymous ftp from {\tt math.uiuc.edu}.

\bibitem{singular} G. M. Greuel and G. Pfister, {\it A Singular Introduction to
Commutative Algebra}, 2nd extended edition, Springer, Berlin, 2008. 

\bibitem{Hansen} J. Hansen, 
\textit{Toric surfaces and error-correcting codes} in Coding Theory,
Cryptography, and Related 
Areas, Springer, 132--142, 2000.

\bibitem{Pellikaan} P. Heijnen and R. Pellikaan, Generalized Hamming
weights of $q$--ary Reed--Muller codes, IEEE Trans. Inform. Theory {\bf
44} (1998), no. 1, 181--196.

\bibitem{helleseth} T. Helleseth, T. Kl{\o}ve and J. Mykkelveit, 
The weight distribution of irreducible cyclic codes with block
lengths $n_1((q^l-1)/N)$, Discrete Math. {\bf 18} (1977), 179--211.

\bibitem{Huffman-Pless} W. C. Huffman and V. Pless,
\textit{Fundamentals of error-correcting 
codes}, Cambridge University Press, Cambridge, 2003. 

\bibitem{JacI}{N. Jacobson, {\it Basic Algebra I\/}, Second edition,
W.~H.~Freeman and Company, New York, 1996.}

\bibitem{Johnsen} T. Johnsen and H. Verdure, Generalized Hamming
weights for almost affine codes, IEEE Trans. Inform. Theory {\bf 63}
(2017), no. 4, 1941--1953.

\bibitem{Koblitz} N. Koblitz, \textit{A course in number theory and
cryptography}, Second edition, Graduate Texts in Mathematics {\bf
114}, Springer-Verlag, New York, 1994.

\bibitem{cocoa-book} M. Kreuzer and L. Robbiano, {\it Computational
Commutative Algebra} 2,
Springer-Verlag, Berlin, 2005. 

\bibitem{Kreuzer-linear-algebra} M. Kreuzer and L. Robbiano, 
\textit{Computational linear and commutative algebra}, Springer,
Cham, 2016. 

\bibitem{lachaud} G. Lachaud, The parameters of projective
Reed-Muller codes, Discrete Math. {\bf 81} (1990),  no. 2, 217--221. 

\bibitem{Little} J. B Little, 
Remarks on generalized toric codes, Finite Fields Appl. {\bf 24}
(2013), 1--14. 

\bibitem{cartesian-codes} H. H. L\'opez, C. Renter\'\i a and R. H.
Villarreal, Affine cartesian codes,
Des. Codes Cryptogr. {\bf 71} (2014), no. 1, 5--19.

\bibitem{affine-codes} H. H. L\'opez, E. Sarmiento, M. Vaz
Pinto and R. H. Villarreal, Parameterized affine codes, 
Studia Sci. Math. Hungar. {\bf 49} (2012), no. 3, 406--418.


\bibitem{MacWilliams-Sloane} F. J. MacWilliams and N. J. A. Sloane, 
The Theory of Error-correcting Codes, North-Holland, 1977. 

\bibitem{hilbert-min-dis} J. Mart\'\i nez-Bernal, Y. Pitones 
and R. H. Villarreal, Minimum
distance functions of graded ideals   
and Reed--Muller-type codes, 
J. Pure Appl. Algebra {\bf 221} (2017), 251--275. 

\bibitem{min-dis-ci} J. Mart\'\i nez-Bernal, Y. Pitones and R. H. Villarreal, 
Minimum distance functions of complete intersections, 
J. Algebra Appl. {\bf 17} (2018), no. 11, 1850204 (22 pages). 

\bibitem{weights-matroid} J. Mart\'\i nez-Bernal, M. A. Valencia-Bucio
and R. H. Villarreal, Linear codes over signed graphs, Des. Codes
Cryptogr. {\bf 88} (2020), no. 2, 273--296.

\bibitem{mercier-rolland} D. J. Mercier and R. Rolland, 
Polyn\^{o}mes homog\`enes qui s'annulent sur l'espace projectif
${\rm P}^m({\bf F}_q)$, J. Pure Appl. Algebra {\bf 124} (1998), no.
1-3, 227--240.


\bibitem{prim-dec-critical}  L. O'Carroll, F. Planas-Vilanova and R. H. Villarreal,
Degree and algebraic properties of lattice and matrix ideals,
SIAM J. Discrete Math. {\bf 28} (2014), no. 1, 394--427.

\bibitem{Patanker-Singh} N. Patanker and S. K. Singh, Second generalized Hamming weight of
projective toric code over hypersimplices. Preprint 2020,
arXiv:2002.10920 [math.AC].


\bibitem{Ruano} D. Ruano, 
On the structure of generalized toric codes, 
J. Symbolic Comput. {\bf 44} (2009), no. 5, 499--506.

\bibitem{ci-codes} E. Sarmiento, M. Vaz Pinto and R. H. Villarreal, 
The minimum distance of parameterized codes on projective tori, {Appl.
Algebra Engrg. Comm. Comput.} {\bf 22} (2011), no. 4, 249--264. 

\bibitem{seidenberg}
 A. Seidenberg, Constructions in algebra, 
Trans. Amer. Math. Soc. {\bf 197}  (1974), 273--313. 

\bibitem{Soprunov} I. Soprunov, Lattice polytopes in coding theory, 
J. Algebra Comb. Discrete Struct. Appl. {\bf 2} (2015), no. 2,
85--94. 

\bibitem{sorensen} A. S{\o}rensen, Projective Reed-Muller codes, 
IEEE Trans. Inform. Theory {\bf 37} (1991), no. 6, 1567--1576.

\bibitem{Sta1}{R. Stanley, Hilbert functions of graded 
algebras, Adv.
Math. {\bf 28} (1978), 57--83.}

\bibitem{Stanley-2011} R. P. Stanley, \textit{Enumerative
Combinatorics Volume 1}, Second edition, Cambridge Studies in
Advanced Mathematics 
{\bf 49}, Cambridge University Press, Cambridge, 2012. 

\bibitem{stichtenoth} H. Stichtenoth, \textit{Algebraic function
fields and codes}, Second edition, Graduate Texts in Mathematics {\bf
254}, Springer-Verlag, Berlin, 2009. 

\bibitem{Stur1}{B. Sturmfels, {\em Gr\"obner Bases and Convex 
Polytopes\/}, University Lecture Series {\bf 8}, American Mathematical
Society, Rhode Island, 1996.}  

\bibitem{tsfasman} M. Tsfasman, S. Vladut and D. Nogin, {\it
Algebraic 
geometric codes{\rm:} basic notions}, Mathematical Surveys and
Monographs {\bf 139}, American Mathematical Society, 
Providence, RI, 2007. 

\bibitem{monalg-rev} R. H. Villarreal, {\it Monomial Algebras\/},
Second edition, 
Monographs and Research Notes in Mathematics, Chapman and Hall/CRC,
Boca Raton, FL, 2015.

\bibitem{wei} V. K. Wei, Generalized Hamming 
weights for linear codes, 
IEEE Trans. Inform. Theory {\bf 37}  (1991),  no. 5, 1412--1418. 

\end{thebibliography}

\end{document}